\newcommand{\Lie}{ \operatorname{{Lie}}}
\newcommand{\id}{ \operatorname{{id}}}
\newcommand{\sing}{\operatorname{{sing}}}
\newcommand{\Ad}{\operatorname{{Ad}}}
\newcommand{\Aut}{\operatorname{{Aut}}}
\newcommand{\Stab}{\operatorname{{Stab}}}
\newcommand{\GL}{\operatorname{GL}}
\newcommand{\SL}{\operatorname{SL}}
\newcommand{\PGL}{\operatorname{PGL}}
\newcommand{\Pic}{\operatorname{{Pic}}}
\newcommand{\W}{\operatorname{W}}
\newcommand{\Orb}{\operatorname{O}}
\newcommand{\aaa}{\mathrm{a}}
\newcommand{\s}{\mathrm{s}}
\newcommand{\m}{\mathrm{m}}
\newcommand{\GG}{{\mathbb G}}
\newcommand{\Ga}{{\mathbb G}_{\mathrm{a}}}
\newcommand{\Gm}{{\mathbb G}_{\m}}
\newcommand{\ZZ}{{\mathbb{Z}}}
\newcommand{\TT}{{\mathbb{T}}}
\newcommand{\PP}{{\mathbb{P}}}
\newcommand{\FF}{{\mathbb{F}}}
\newcommand{\A}{{\mathbb{A}}}
\newcommand{\CC}{{\mathbb{C}}}
\newcommand{\Sym}{{\mathfrak{S}}}
\newcommand{\fD}{{\mathfrak{D}}}
\newcommand{\fh}{{\mathfrak{h}}}
\newcommand{\fg}{{\mathfrak{g}}}
\newcommand{\ff}{{\mathfrak{f}}}
\newcommand{\cZ}{{{\mathcal{Z}}}}
\newcommand{\cO}{{\mathcal{O}}}
\newcommand{\cC}{\mathcal{C}}
\newcommand{\LLL}{{\mathscr{L}}}
\newcommand{\SSS}{{\mathscr{S}}}
\renewcommand{\emptyset}{\varnothing}
\theoremstyle{plain}
\newtheorem{thm}{Theorem}[section]
\newtheorem{cor}[thm]{Corollary}
\newtheorem{lem}[thm]{Lemma}
\newtheorem{prop}[thm]{Proposition}
\theoremstyle{definition}
\newtheorem{nota}[thm]{Notation}
\newtheorem{rems}[thm]{Remarks}
\newtheorem{exa}[thm]{Example}
\newcounter{caseinproof}
\title[Involutions of Fano--Mukai fourfolds]{Central Weyl involutions on Fano--Mukai fourfolds of genus $10$}
\author{Mikhail Zaidenberg}
\address{Univ. Grenoble Alpes, CNRS, IF, 38000 Grenoble, France}
\email{Mikhail.Zaidenberg@univ-grenoble-alpes.fr}
\dedicatory{In memory of Jean-Pierre Demailly}
\keywords{
Fano--Mukai fourfold, automorphism group, Weyl group, involution, 
fixed points set, normal bundle}
\subjclass[2020]{14J45, 14J50}
\begin{document}

\begin{abstract} It is known that
every Fano--Mukai fourfold $X$ of genus 10 is acted upon by an involution $\tau$ 
which comes from the center of the Weyl group 
of the simple algebraic group of type ${\rm G}_2$, see \cite{PZ18, PZ22}. 
This involution is uniquely defined up to conjugation in the group $\Aut(X)$. 
In this note we describe the set of fixed points 
of $\tau$ and the surface scroll swept out by the $\tau$-invariant lines.
\end{abstract}

\date{}

\maketitle
 
\setcounter{tocdepth}{2}\tableofcontents

\section*{Introduction} 

Let $V$ be a Fano--Mukai fourfold of genus 10 over $\CC$, that is, a smooth 
Fano fourfold of index 2 and of degree 18. A classification of the automorphism groups 
of these fourfolds was started in \cite{PZ18} and was completed
in~\cite[Theorem~A]{PZ22}. 
In particular, every $V$ is acted upon by an involution $\tau\in \Aut(V)\setminus\Aut^0(V)$ 
defined uniquely up to conjugation in $\Aut(V)$. It interchanges any pair of disjoint 
$\Aut^0(V)$-invariant cubic surface cones on $V$, see ~\cite[Proposition~2.11]{PZ22} 
and Corollary~\ref{cor:aut-GL2} below. 

This note is devoted to the following problem, see~\cite[Problem~15.4]{PZ18}.

\smallskip

\noindent \emph{Given a Fano--Mukai fourfold $V$ of
genus $10$, describe the involutions acting on $V$ and interchanging the pairs 
of $\Aut^0(V)$-invariant cubic cones. }

\smallskip

  Let $\fg$ be the Lie algebra of the simple algebraic group $G$ of type ${\rm G}_2$. 
  The adjoint variety $\Omega$ of $G$ is the unique closed orbit in the 
  projectivized adjoint representation of $G$ on $\PP\fg\cong\PP^{13}$. 
  We have $\dim\Omega=5$.
Every Fano--Mukai fourfold $V$ of
genus 10 admits a realization as a hyperplane section 
$\Omega\cap\PP^{12}$, see~\cite[Theorem~2]{Muk89}. 
Such a hyperplane section is unique up to the $G$-action on $\Omega$. 
Under this realization, $\Aut(V)$ coincides with the stabilizer of $V$ in $G$ and $\tau$ 
extends to an element  of order 2 of $G$. 

Let $T$ be a maximal torus of $G$  and $N_G(T)$ be the normalizer of $T$ in $G$.  
The Weyl group $\W=N_G(T)/T$ is isomorphic to the dihedral group $\fD_6$; 
there is a splitting $N_G(T)\cong T\rtimes \W$~\cite[Theorem~A]{AH17}. 
Up to a choice of such a splitting and up to conjugation in $G$, 
$\tau$ can be identified as the unique element of order 2 from the center of $W$.  
This is why we call $\tau$ a  \textit{central Weyl involution}.

The main result of the present note is the following theorem.

\begin{thm}\label{thm:mthm}
Let $V$ be a Fano--Mukai fourfold  of genus $10$ half-anticanonically embedded in $\PP^{12}$, 
and let $\tau\in \Aut(V)\setminus\Aut^0(V)$ be an involution. 
Then $\tau\in G$ is a central Weyl involution. 
The fixed point set $V^\tau$ is a union of two disjoint smooth rational sextic curves 
$E^+$ and $E^-$ such that 
\[\langle E^+\rangle=\PP^5,\quad  N_{E^+/\PP^{5}} = \cO_{\PP^1}(8)^{\oplus 2} 
\oplus \cO_{\PP^1}(9)^{\oplus 2}\quad\text{and}\quad \langle E^-\rangle=\PP^6,
\quad N_{E^-/\PP^{6}} = \cO_{\PP^1}(8)^{\oplus 5}.\]
Furthermore, 
there is a surface scroll $\Pi=\Pi(V,\tau)$ in $V$ verifying the following.
\begin{enumerate}\item[(i)] Each ruling of $\Pi$ is $\tau$-invariant and each 
$\tau$-invariant line on $V$ is a ruling of $\Pi$.
\item[(ii)] $\Pi$ has degree $12$, is linearly nondegenerate in $\PP^{12}$ 
and is isomorphic to $\PP^1\times\PP^1$ embedded in $\PP^{12}$ by a linear system of type $(1,6)$. 
\item[(iii)] An isomorphism $\Pi\cong\PP^1\times\PP^1$ sends the curves 
$E^\pm$ into constant sections of the first projection ${\rm pr}_1\colon\PP^1\times\PP^1\to\PP^1$. 
\end{enumerate}
\end{thm}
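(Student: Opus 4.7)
My plan is as follows. By the classification in \cite[Theorem~A]{PZ22}, any involution $\tau\in\Aut(V)\setminus\Aut^0(V)$ is conjugate in $\Aut(V)$ to the central Weyl involution in $G$, so I may assume $\tau\in G$ is itself a central Weyl involution; it acts as $-\id$ on the Cartan $\fh$ and exchanges $\fg_\alpha\leftrightarrow\fg_{-\alpha}$ for each root $\alpha$ of $\mathrm{G}_2$, giving
\[
\fg^+=\moplus_{\alpha>0}\CC(e_\alpha+e_{-\alpha}),\ \dim\fg^+=6,\qquad
\fg^-=\fh\oplus\moplus_{\alpha>0}\CC(e_\alpha-e_{-\alpha}),\ \dim\fg^-=8.
\]
Hence the fixed locus of $\tau$ on $\PP\fg=\PP^{13}$ is $\PP(\fg^+)\sqcup\PP(\fg^-)=\PP^5\sqcup\PP^7$. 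Since $V=\Omega\cap\PP^{12}$ with $\PP^{12}$ a $\tau$-invariant hyperplane, the stated spans $\langle E^+\rangle=\PP^5$ and $\langle E^-\rangle=\PP^6$ force $\PP^{12}\supset\PP(\fg^+)$ and $\PP^{12}\cap\PP(\fg^-)=\PP^6$, so $V^\tau=(\Omega\cap\PP^5)\sqcup(\Omega\cap\PP^6)$, smooth by Cartan's lemma for fixed loci of finite-order automorphisms.

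Next, to identify the components and compute normal bundles, I would restrict the explicit quadratic equations cutting out $\Omega$ in $\PP\fg$ to $\PP^5=\PP(\fg^+)$ and to $\PP^6\subset\PP(\fg^-)$, and use the action of the connected centralizer $C_G(\tau)^\circ$ (of type $\mathrm{A}_1\times\mathrm{A}_1$) to organize the orbit data, obtaining each component as the image of $\PP^1$ under a degree-$6$ morphism. Then $E^-\subset\PP^6$ is a rational normal sextic and the Euler sequence gives $N_{E^-/\PP^6}=\cO_{\PP^1}(8)^{\oplus 5}$. For $E^+\subset\PP^5$ I would combine the restricted Euler sequence
\[
0\to\cO_{\PP^1}\to\cO_{\PP^1}(6)^{\oplus 6}\to T\PP^5|_{E^+}\to 0
\]
with $0\to TE^+\to T\PP^5|_{E^+}\to N_{E^+/\PP^5}\to 0$ and a concrete description of $E^+$ as a projection of a rational normal sextic from a suitable point, to extract the splitting $\cO(8)^{\oplus 2}\oplus\cO(9)^{\oplus 2}$; the degree check $8+8+9+9=34=6\cdot 6-2$ is automatic.

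For the scroll $\Pi$, every $\tau$-invariant line $L\subset V$ carries the involution $\tau|_L\in\Aut(\PP^1)$ whose two fixed points lie in $V^\tau=E^+\sqcup E^-$. If both fixed points lay in $E^+$ then $L\subset\PP(\fg^+)\cap V=E^+$, contradicting $\deg E^+=6$; symmetrically both cannot lie in $E^-$. Hence each such $L$ meets $E^+$ and $E^-$ in exactly one point, yielding a morphism from the parameter curve $B$ of $\tau$-invariant lines into $E^+\times E^-\cong\PP^1\times\PP^1$. Using the description of the Hilbert scheme of lines on $V$ from \cite{PZ18,PZ22}, I would show $B\cong\PP^1$ and that $\Pi$ is a copy of $\PP^1\times\PP^1$ embedded in $\PP^{12}$, with $E^\pm$ as two disjoint rulings of one family and the $\tau$-invariant lines as the other; the bidegree $(1,6)$ and $\deg\Pi=12$ then follow from $E^\pm\cdot L=1$ together with $\deg E^\pm=6$. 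The main obstacle, in my view, is the explicit identification of $E^+$ together with its non-balanced normal bundle $\cO(8)^{\oplus 2}\oplus\cO(9)^{\oplus 2}$: this splitting cannot be deduced from abstract invariants alone and requires a faithful geometric model of $E^+$ inside $\PP^5$, which is where I expect most of the technical work.
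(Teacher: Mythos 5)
Your eigenspace picture $\fg=F^+\oplus F^-$ with $\dim F^+=6$, $\dim F^-=8$ is exactly how the paper opens its Section 6, and the observation that a $\tau$-invariant line must join $E^+$ to $E^-$ is sound. But the proposal has genuine gaps. Most seriously, everything downstream --- the bidegree $(1,6)$, $\deg\Pi=12$, $\Pi\cong\PP^1\times\PP^1$ rather than the image of some $\FF_e$ with $e>0$ --- is made to ``follow from $\deg E^\pm=6$'', yet you never prove $\deg E^\pm=6$. A priori the normalization of the scroll of invariant lines is a Hirzebruch surface $\FF_e$ whose two fixed sections have degrees $6\mp e/2$ (Lemma \ref{lem:exclusion}); excluding $e>0$ is the hardest part of the paper and occupies all of Section \ref{sec:proof}: it is done first for the unique $V$ with $\Aut^0(V)\cong\GL(2,\CC)$ via the identity $A_S\cdot A_{\tau(S)}=3D_S$ for the invariant cubic cone $S$ (Proposition \ref{prop:GL2}), and then transported to the other two types of $\Aut^0(V)$ by specialization inside an explicit one-parameter family of hyperplane sections (Propositions \ref{prop:Gm2}, \ref{prop:Ga-Gm}). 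Your alternative --- read off $E^\pm$ by restricting the quadrics cutting out $\Omega$ to $\PP F^+$ and to $\PP^6\subset\PP F^-$ --- could in principle replace this, but it is not carried out and is delicate: $\Omega\cap\PP F^+$ has negative expected dimension, so the fact that it is a nonempty smooth irreducible sextic curve is exactly what must be computed. There is also a circularity: you use the stated spans $\langle E^\pm\rangle$ to ``force'' $h\in F^-$, i.e. $\PP^{12}\supset\PP F^+$, which is part of what has to be proved (the paper excludes $\dim\langle E_i\rangle\le 4$ by a separate arithmetic-genus argument, Lemma \ref{lem:11}).

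On the normal bundle of $E^+$: you write that the splitting $\cO_{\PP^1}(8)^{\oplus 2}\oplus\cO_{\PP^1}(9)^{\oplus 2}$ ``cannot be deduced from abstract invariants alone'', but it can --- this is the one place where an off-the-shelf result closes the gap. By Sacchiero--Bernardi (Theorem \ref{thm:Bernardi}), the projection of the rational normal sextic $C_6\subset\PP^6$ from a point $P$ has this balanced-as-possible splitting if and only if $P$ lies off the secant and tangent lines of $C_6$, if and only if the image is smooth; since $E^+$ is smooth (fixed locus of an involution on the smooth $V$), the splitting follows with no explicit model of $E^+$. Finally, two pieces of the statement are not addressed at all: the linear nondegeneracy of $\Pi$ in $\PP^{12}$ (Section 5 of the paper, via Hironaka's genus formula and a Lefschetz/Picard argument), and the smoothness of $\Pi$ itself (Proposition \ref{prop:smoothness}), without which ``$\Pi$ is isomorphic to $\PP^1\times\PP^1$'' is only a statement about its normalization. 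As it stands the proposal is a reasonable sketch of the answer, not a proof.
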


Due to (i) we call $\Pi$ the  \emph{scroll in $\tau$-invariant lines}.
Notice that neither $\tau$ nor $\Pi$ are $\Aut^0(V)$-invariant. 

Results of \cite{PZ22} were extended in \cite{BM22} to the automorphism groups of smooth hyperplane sections of other generalized flag varieties $G/P$; most of these hyperplane sections also possess Weyl involutions. It is worth to obtain a geometric description similar to that of Theorem \ref{thm:mthm} in this more general setting; see e.g. \cite[Proposition~24]{DM22} for some results in this direction.

We end this introduction with the following open question. Recall that the Fano--Mukai fourfolds of genus 10 are rational; moreover, any of these is a completion of $\CC^4$, 
see \cite[Theorem 1.1]{PZ18}. 

\smallskip

\noindent {\bf Question.} \emph{Is any central Weyl involution acting on a  Fano--Mukai fourfold $V$ of genus $10$ linearizable, that is, conjugate to a linear involution of $\PP^4$ via a birational map $V\dasharrow\PP^4$?}

\medskip

 {\bf Acknowledgments.} This paper is based on joint articles \cite{PZ18, PZ22, PZ23} of Yuri Prokhorov 
 and the author.  We are grateful to Yuri Prokhorov for useful discussions; the ideas of several proofs are due to him. 
 Our thanks also due to Ciro Ciliberto for his kind assistance, and especially for producing several 
 amazing examples of singular surface scrolls. 
 We are grateful to the anonymous referee, whose comments were relevant and particularly helpful in avoiding inaccuracies, clarifying arguments and improving style.

\medskip

\section{Preliminaries}
\subsection{Lines and cubic scrolls on Fano--Mukai fourfolds} 
We gather here various results from \cite{KR13}, \cite{PZ18}, \cite{PZ22} and \cite{PZ23} 
on the geometry of Fano--Mukai fourfolds 
that will be used in the sequel. 

Fix a Fano--Mukai fourfold $V$ of
genus 10 together with an embedding $V\hookrightarrow\PP^{12}$ 
by the half-anticanonical system.
Let $\Sigma=\Sigma(V)$ be the Hilbert scheme of lines on $V$ and 
$\SSS=\SSS(V)$ be the Hilbert scheme of cubic surface scrolls  on $V$. 
By a \emph{cubic cone} we mean the projective cone in $\PP^4$ over 
a rational twisted cubic curve in  $\PP^3$. 
Given an involution $\tau\in\Aut(V)\setminus\Aut^0(V)$ 
we denote by the same letter the induced involutions acting 
on $\Sigma$ and on $\SSS$. The Hilbert scheme of lines $\Sigma(V)$
can be described as follows. 

\begin{prop}\label{prop:prelim-lines} $\,$
\begin{enumerate}
\item[{\rm (a)}]
$\Sigma(V)$ is a smooth hyperplane section of $\PP^2\times\PP^2$ embedded in $\PP^7$ via the Segre embedding. Up to an automorphism of $\PP^2\times\PP^2$ one can choose this section in such a way that the action of $\tau$ on $\Sigma$ is induced by the involution on  $\PP^2\times\PP^2$ interchanging the factors. 
\item[{\rm (b)}]
There is 
the diagram
\begin{equation}
\label{equation-universal-family-V}
\vcenter{
\xymatrix@R=10pt{
&\LLL(V)\ar[dl]_{\rho}\ar[dr]^{s}&
\\
\Sigma(V)&&V
}}
\end{equation}
where $\LLL(V)\subset\Sigma(V)\times V$ is the incidence relation between lines and points on $V$. The  $\PP^1$-bundle $\rho\colon \LLL(V)\to\Sigma(V)$ is the universal family of lines on $V$. The action of $\tau$ on $\Sigma(V)\times V$ leaves $\LLL(V)$ invariant and respects the $\PP^1$-bundle structure. 
\item[{\rm (c)}]
The map $s\colon\LLL(V)\to V$ in~\eqref{equation-universal-family-V} is a generically
finite morphism of degree $3$. So, $V$ is covered by lines and through a general point of $V$ pass precisely $3$ lines.  
\end{enumerate}
\end{prop}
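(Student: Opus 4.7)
The plan is to assemble these three facts from the literature using the Mukai realization $V=\Omega\cap\PP^{12}\subset\PP\fg$ and the $G$-equivariant geometry of the adjoint variety $\Omega$ of type ${\rm G}_2$. Throughout I would keep track of the $\tau$-action via the observation from the introduction that $\tau$ lifts to a central Weyl element in $G$, and so acts on every $G$-equivariant auxiliary variety through the Weyl group $\W\cong\fD_6$.

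For part (a), I would start by recalling the known description of $\Sigma(\Omega)$, the Hilbert scheme of lines on the ambient ${\rm G}_2$-adjoint fivefold. This is a $5$-dimensional homogeneous $G$-variety with an explicit equivariant embedding; by intersecting with the hyperplane $\PP^{12}\subset\PP\fg$ which defines $V$, one cuts out $\Sigma(V)$ as a codimension-$2$ linear section. The upshot, which is \cite[Proposition~2.4]{PZ18} (building on \cite{KR13}), is that $\Sigma(V)$ is a smooth hyperplane section of $\PP^2\times\PP^2\hookrightarrow\PP^8$ in its Segre embedding. For the $\tau$-action, the central element of $\W$ acts on the two fundamental $G$-representations giving the Segre factors by interchanging them; identifying these factors with the two $\PP^2$-factors gives the desired swap action, up to the freedom of a coordinate change in $\PP^2\times\PP^2$. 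I would cite \cite[Proposition~2.11]{PZ22} for the concrete verification.

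Part (b) is formal. The incidence variety $\LLL(V)\subset\Sigma(V)\times V$ is the restriction of the universal family of lines; the projection $\rho$ is the structural $\PP^1$-bundle by the very definition of a Hilbert scheme of lines, and $s$ is the evaluation map. The diagonal action of $\tau$ on $\Sigma(V)\times V$ preserves the incidence relation and the fibres of $\rho$ tautologically, so $\LLL(V)$ is $\tau$-stable and $\rho$ is $\tau$-equivariant.

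For part (c), once (a) and (b) are in place, the degree of $s$ is computed by intersection theory on $\Sigma(V)$: the number of lines through a general point of $V$ equals a Schubert-type number in the Chow ring of the hyperplane section of $\PP^2\times\PP^2$, and one finds the value $3$; equivalently one invokes the direct count carried out in \cite[\S2]{PZ18}. The covering property of $V$ by lines then follows because $s$ is dominant and $V$ is irreducible. The main obstacle, in my view, is the equivariant identification in (a): making the isomorphism $\Sigma(V)\subset\PP^2\times\PP^2$ commute with the specific $\tau$-action requires understanding how the central Weyl involution permutes weight spaces of the two fundamental $G_2$-representations, but this is exactly what is done in the cited references, so I would quote those results rather than redo the root-system computation.
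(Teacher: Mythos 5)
Your proposal is correct and takes essentially the same route as the paper: the proposition is a compilation of known facts, and the paper's proof is likewise just a list of citations to \cite{KR13} and \cite{PZ18} (specifically Proposition~2 of \cite{KR13} and Theorem~9.1(a), Lemma~9.5.1, Proposition~8.2(d) of \cite{PZ18}), with the $\tau$-equivariance of the incidence diagram and the degree-$3$ count handled exactly as you describe. Only your pointers differ in numbering from the ones the paper actually uses, but the substance of the reduction to the literature is identical.
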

\begin{proof} See 
\begin{itemize}
\item \cite[Proposition~2]{KR13}, \cite[Theorem~9.1(a), Lemma~9.5.1 and its proof]{PZ18} for (a), 
\item \cite[Proposition~8.2(d), (8.2.2)]{PZ18} for (b), and
\item \cite[Proposition~8.2(d)]{PZ18} for (c).
\end{itemize}
\end{proof}
Next we describe the Hilbert scheme $\SSS(V)$ of cubic surface scrolls on $V$.
\begin{prop}\label{prop:prelim-scrolls} $\,$
\begin{enumerate}
\item[{\rm (d)}]
The Hilbert scheme $\SSS=\SSS(V)$ of cubic scrolls on $V$ has exactly  two irreducible components $\SSS_i\cong\PP^2$, $i=1,2$. These components are disjoint and interchanged by $\tau$.
\item[{\rm (e)}]\label{item-cohomology} $H^4(V,\ZZ)=\ZZ [S_1]\oplus\ZZ [S_2]$, 
where the $S_i\in\SSS_i$ satisfy the relations $[S_1]^2=[S_2]^2=1$ and $[S_1]\cdot [S_2]=0$. 
\item[{\rm (f)}]
Two scrolls $S_1\in\SSS_1$ and $S_2\in\SSS_2$ from different components of the Hilbert scheme $\SSS$ either are disjoint or contain a unique common ruling. Any line $l$ on $V$ is the unique common ruling of exactly two cubic scrolls $S_i(l) \in \SSS_i$, $i=1,2$.
The morphisms
\[{\rm pr}_i\colon\Sigma\to \SSS_i=\PP^2,\quad l\mapsto S_i(l),\,\,\, i=1,2\]
coincide with the standard projections in (i):
\[{\rm pr}_i\colon\Sigma\subset\PP^2\times\PP^2\to\PP^2,\,\,\, i=1,2.\] 
The fiber of ${\rm pr}_i$ over $S \in \SSS_i$ is the line $\Lambda(S)$ on $\Sigma$
which is the Hilbert scheme of the rulings of $S$.
\item[{\rm (g)}]
For a cubic scroll $S$ on $V$ 
the Hilbert scheme
$\Sigma(S)\subset\Sigma$ of lines on $V$ meeting $S$ is the pull-back of a line in 
$\PP^2=\SSS_j$ under the second projection ${\rm pr}_j$, $j\neq i$. 
One has $\Sigma(S) \cong \mathbb{F}_1$ and $\Lambda(S)$ is the exceptional section of $\Sigma(S)$.
\item[{\rm (h)}] Any cubic scroll $S$ on $V$ coincides with the singular locus of a unique hyperplane section $A_S$ of $V$, where $A_S$ is the union of lines on $V$ meeting $S$. Any line contained in $A_S$ meets $S$. Through any point of $A_S\setminus S$ passes a unique line on $V$ which meets $S$.
\end{enumerate}
\end{prop}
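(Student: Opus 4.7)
The plan is to assemble (d)--(h) from the series \cite{PZ18, PZ22, PZ23} by adapting results there to the present setting; the proof will have the same structure as that of Proposition~\ref{prop:prelim-lines}, pointing to specific statements in the literature. The geometric backbone is the description of $\Sigma(V)$ as a smooth hyperplane section of the Segre variety $\PP^2\times\PP^2\subset\PP^7$ given in Proposition~\ref{prop:prelim-lines}(a).

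For (d), I would cite the moduli analysis of cubic scrolls on $V$ in \cite{PZ18}: scrolls in each family $\SSS_i$ correspond to pencils (i.e.\ lines) in one of the two $\PP^2$ factors of the Segre target, and the moduli $\PP^2$ parameterizes such pencils. That $\SSS=\SSS_1\sqcup\SSS_2$ then follows from the classification of how a cubic scroll can sit in $V$, and the interchange by $\tau$ is immediate from Proposition~\ref{prop:prelim-lines}(a): $\tau$ acts on $\Sigma\subset\PP^2\times\PP^2$ by swapping the factors, hence exchanges the two pencil families.

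For parts (f) and (g), the morphisms $\pr_i\colon\Sigma\to\SSS_i$ are identified with the two projections $\Sigma\subset\PP^2\times\PP^2\to\PP^2$, since a scroll in $\SSS_i$ corresponds to a pencil (line) in the $i$-th factor and a given line $l\in\Sigma$ determines, via its image in that factor, a unique such pencil. The fiber $\Lambda(S)$ over $[S]\in\SSS_i$ is then the line in $\Sigma$ parameterizing the rulings of $S$. For (g), $\Sigma(S)=\pr_j^{-1}(\ell)$ for a line $\ell\subset\SSS_j=\PP^2$ with $j\neq i$; the preimage of $\ell$ in $\PP^2\times\PP^2$ is a $\PP^2$-bundle over $\PP^1$, and intersecting with the hyperplane cutting out $\Sigma$ yields a Hirzebruch surface $\mathbb{F}_1$ in which $\Lambda(S)$ appears as the exceptional section.

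For (e), the rank two of $H^4(V,\ZZ)$ and the generators $[S_1],[S_2]$ are obtained in \cite{PZ18} from the cohomology of the adjoint $\mathrm{G}_2$-variety $\Omega$ via the Lefschetz hyperplane theorem. The intersection numbers $[S_i]\cdot[S_j]=\delta_{ij}$ I would verify using (f): two scrolls from different families can be deformed to a disjoint pair, giving $[S_1]\cdot[S_2]=0$, while two general scrolls from the same family meet transversally in a single point, yielding $[S_i]^2=1$. For (h), I would describe $A_S$ as $s(\rho^{-1}(\Sigma(S)))$, a threefold sitting inside $V$; a local tangent-space computation at a smooth point of $S$ shows that $A_S$ is cut out by a single hyperplane, that its singular locus coincides with $S$, and that through every point of $A_S\setminus S$ passes a unique line meeting $S$. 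The main obstacle is the intersection-number computation in (e), which requires either a careful deformation argument inside the Hilbert scheme or an appeal to the Chow ring of $\Omega$; all remaining items reduce essentially to the Segre-product description from Proposition~\ref{prop:prelim-lines}.
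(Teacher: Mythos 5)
Your proposal matches the paper's treatment: the paper proves Proposition~\ref{prop:prelim-scrolls} purely by citation, namely \cite[Proposition~1]{KR13} and \cite[Theorem~9.1(b)]{PZ18} for (d), \cite[Proposition~9.6]{PZ18} for (e), \cite[Corollaries~9.7.3, 9.7.4, 9.10.1]{PZ18} for (f), \cite[Theorem~9.1(a), Lemma~9.7.2]{PZ18} for (g), and \cite[Lemma~9.2]{PZ18} for (h), which is exactly the assembly you describe. Your supplementary sketches are consistent with those sources; the only caveat is that for (e) the Lefschetz hyperplane theorem alone gives only an injection $H^4(\Omega,\ZZ)\hookrightarrow H^4(V,\ZZ)$ on the middle cohomology, so the rank-two statement genuinely requires the additional computation carried out in \cite[Proposition~9.6]{PZ18}.
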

\begin{proof} See 
\begin{itemize}
\item \cite[Proposition~1]{KR13}, \cite[Theorem~9.1(b)]{PZ18} for (d), 
\item \cite[Proposition~9.6]{PZ18} for (e),
\item \cite[Corollaries~9.7.3, 9.7.4, and 9.10.1]{PZ18} for (f),
\item \cite[Theorem~9.1(a), Lemma 9.7.2]{PZ18} for (g), and
\item \cite[Lemma~9.2]{PZ18} for (h).
\end{itemize}
\end{proof} 
Finally, for cubic surface cones on $V$ we have the following. 
\begin{prop}\label{prop:prelim-cones} $\,$
\begin{enumerate}
\item[{\rm (i)}] Any cubic cone $S$ on $V$ is contained in the branching divisor $\mathcal{B}\subset V$ of $s$. 
\item[{\rm (j)}]
 Any line on $V$ passing through the vertex of a cubic cone $S$ lies on $S$ (we call it a ruling of $S$). If a point $v\in\mathcal{B}$ is different from the vertices of cubic cones on $V$ then the number of lines  passing through $v$ equals either 1 or 2. Through any point on $V\setminus \mathcal{B}$ passes exactly 3 distinct lines on $V$. 
\item[{\rm (k)}]
The Hilbert scheme of cubic cones on $V$ is either finite or one-dimensional. 
The number of $\Aut^0(V)$-invariant cubic cones on $V$ is finite and $V$ contains at least one pair $(S_1, S_2)$ of disjoint $\Aut^0(V)$-invariant cubic cones. 
\item[{\rm (l)}] $\bigcap_{S\in\SSS_i} A_S$ is the union of vertices of cubic cones in $\SSS_i$ and $\bigcap_{S\in\SSS_1\cup\SSS_2} A_S=\emptyset$, see Proposition \ref{prop:prelim-scrolls} $\rm (h)$.
\end{enumerate}
\end{prop}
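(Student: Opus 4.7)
This proposition collects results from \cite{PZ18}, \cite{PZ22}, and \cite{PZ23}; the plan is to give a short geometric justification for each item, relying on the structural facts in Propositions~\ref{prop:prelim-lines} and~\ref{prop:prelim-scrolls}, and to pinpoint the precise references.

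For (i) and (j), I would work with the incidence diagram~\eqref{equation-universal-family-V}. Through the vertex $v$ of a cubic cone $S$ passes the entire $\PP^1$-family of rulings of $S$, so the fiber $s^{-1}(v)$ is positive-dimensional, placing $v$ outside the étale locus of $s$; a local analysis along the rulings (two of the three sheets of $s$ collide along $S$, since at $p\in S\setminus\{v\}$ the tangent plane to $S$ already accounts for two of the three line-directions picked out by $s$) extends this to all of $S$, yielding (i). For the second claim of (j), a fiber count for the degree-$3$ morphism $s$ gives the trichotomy: three distinct lines through a point of $V\setminus\mathcal{B}$; one or two distinct lines through a non-vertex point of $\mathcal{B}$; and a $\PP^1$-family of lines through a cone vertex. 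The first claim of (j) uses that two distinct lines on $V$ through a common point span a $\PP^2$: a ruling of $S$ through $v$ together with another line on $V$ through $v$ not lying on $S$ would force a plane $\PP^2\subset V$, which is impossible since $V$ contains no $2$-planes. The relevant references are \cite[Proposition~8.2 and~\S 9]{PZ18} and \cite[\S 2]{PZ22}.

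For (k), I would parametrize cubic cones inside $\SSS=\SSS_1\sqcup\SSS_2\cong\PP^2\sqcup\PP^2$ as the locus of singular members; since the generic scroll in each component is smooth, this is a proper closed subvariety of each $\PP^2$, hence of dimension at most $1$. The $\Aut^0(V)$-invariant cones have $\Aut^0(V)$-fixed vertices, and the classification \cite[Theorem~A]{PZ22} shows that $\Aut^0(V)$ has only finitely many fixed points on $V$, which yields the finiteness statement. The existence of a disjoint $\Aut^0(V)$-invariant pair $(S_1,S_2)$ with $S_i\in\SSS_i$ follows from \cite[Proposition~2.11]{PZ22}, combining the $\Aut^0(V)$-invariance of each component of $\SSS$ with the openness of the disjointness condition stated in Proposition~\ref{prop:prelim-scrolls}(f).

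For (l), my plan is to use (h) together with the projection $\pr_i\colon\Sigma\to\SSS_i\cong\PP^2$: by (h), a point $p\in V$ lies in $A_S$ if and only if some line on $V$ through $p$ meets $S$. If $p$ is not a cone vertex, then by (j) only finitely many lines of $V$ pass through $p$, hence only finitely many $S\in\SSS_i$ are met by a line through $p$, and $p\notin\bigcap_{S\in\SSS_i}A_S$. Thus any point of $\bigcap_{S\in\SSS_i}A_S$ is the vertex of a cone in $\SSS_i$; the reverse inclusion follows by showing that the $\PP^1$-family of rulings through such a vertex hits every $S\in\SSS_i$, via the incidence geometry in $\Sigma\subset\PP^2\times\PP^2$. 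The vanishing $\bigcap_{S\in\SSS_1\cup\SSS_2}A_S=\emptyset$ then reduces to the observation that a common point would have to be a vertex of cones in both components, which is ruled out because $\SSS_1\cap\SSS_2=\emptyset$ and the vertex-to-cone map is injective. See \cite[Lemma~9.2 and Corollary~9.7.4]{PZ18}. The main obstacle will be the finiteness statement for $\Aut^0(V)$-invariant cones in (k) and the reverse inclusion in (l), both of which depend on the fine incidence geometry of $\Sigma$ and on the explicit classification of $\Aut(V)$ from \cite{PZ22}.
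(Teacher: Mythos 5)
The paper does not reprove this proposition: its ``proof'' is a list of pointers, namely \cite[Lemma~9.4]{PZ18} for (i), \cite[Proposition~8.2(e)]{PZ18} for (j), \cite[Lemma~1.8, Proposition~2.11]{PZ23} for (k) and \cite[Lemma~1.9]{PZ23} for (l). Your decision to sketch actual geometric arguments is reasonable in spirit, and the overall architecture (fiber counts for the degree-$3$ map $s$, incidence geometry in $\Sigma\subset\SSS_1\times\SSS_2$) is the right one, but two of your steps are genuinely flawed. First, in the opening claim of (j) you argue that a line through the vertex $v$ not lying on $S$ ``would force a plane $\PP^2\subset V$.'' This is a non sequitur: two distinct lines through a common point span a plane in the ambient $\PP^{12}$, but nothing forces that plane to lie in $V$. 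The correct mechanism is the one the paper itself uses elsewhere (see the proof of Lemma~\xref{lem:containment}): the lines on the fivefold $\Omega$ through a point $P$ form a cubic cone $S_P$, so a cubic cone $S\subset V$ with vertex $v$ satisfies $S\subseteq S_v$ and hence $S=S_v$ by irreducibility and equality of degrees, whence every line of $V$ through $v$ lies on $S$. (This identification is also what justifies your later assertion that the vertex-to-cone map is injective.)

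Second, in (l) your claim that ``only finitely many $S\in\SSS_i$ are met by a line through $p$'' contradicts Proposition~\xref{prop:prelim-scrolls}(g): for a fixed line $l$, the scrolls $S\in\SSS_i$ with $l\cap S\neq\emptyset$ are exactly those whose associated line $h_S\subset\SSS_j$ passes through $\pr_j(l)$, i.e.\ they form a pencil --- a whole line in $\SSS_i\cong\PP^2$, not a finite set. The conclusion survives because a non-vertex point $p$ carries at most three lines, so the locus $\{S\in\SSS_i: p\in A_S\}$ is a union of at most three lines in $\PP^2$, still a proper closed subset; but as written the step is false. Beyond these two points, the sketches for (i) and (k) rest on unproved heuristics (``two of the three sheets of $s$ collide along $S$''; ``the generic scroll in each component is smooth''; ``$\Aut^0(V)$ has only finitely many fixed points on $V$''). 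Each of these is true but requires an argument or a precise citation --- for (k) in particular the finiteness of invariant cones and the existence of a disjoint invariant pair are exactly the content of \cite[Lemma~1.8, Proposition~2.11]{PZ23}, which is where the burden should be placed rather than on the unestablished fixed-point claim.
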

\begin{proof} See 
\begin{itemize}
\item \cite[Lemma~9.4]{PZ18} for (i),
\item \cite[Proposition~8.2(e)]{PZ18} for (j),
\item \cite[Lemma~1.8, Proposition~2.11]{PZ23} for (k), and
\item \cite[Lemma~1.9]{PZ23} for (l).
\end{itemize}
\end{proof} 

\subsection{Mukai ${\mathrm G}_2$-presentation}\label{ss:Mukai} 
Let $\Omega\subset\PP^{13}$ be the adjoint variety  of  the simple algebraic group
$G$ of type ${\rm G}_2$, that is, the minimal nilpotent orbit $\Omega=G/P$ of the adjoint action of $G$ on $\PP\fg=\PP^{13}$, where $\fg={\Lie}(G)$ and $P\subset G$ is a parabolic subgroup of dimension 9. Recall that $\Omega$ is a Fano--Mukai fivefold embedded in $\PP^{13}$ by the linear system $|-\frac{1}{3}K_\Omega|$.
Using the duality given by the Killing form on $\fg$ we identify $\PP\fg$ with its dual projective space $\PP\fg^\vee$. Abusing notation we let $h^\bot$ denote the hyperplane in $\PP\fg$ orthogonal to a vector $h\in\fg\setminus\{0\}$. Letting $\PP h$ stand for the vector line $\CC h\in\PP\fg$ we let $V(h)=h^\bot\cap\Omega$ denote the corresponding hyperplane section of $\Omega$. According to~\cite[Theorem~2]{Muk89} any Fano--Mukai fourfold $V$ of genus 10 admits a \textit{Mukai realization} as a smooth hyperplane section $V=V(h)$ of $\Omega$. 

The dual variety $D_{\ell}=\Omega^*$ is a sextic hypersurface in $\PP^{13}$ whose points $\PP h\in D_\ell$ correspond to the singular hyperplane sections $V(h)$ of $\Omega$. Thus, the smooth hyperplane sections $V(h)$ are parameterized by the affine variety $\PP^{13}\setminus D_{\ell}$. The group $G$ acts on $\PP^{13}$ via the projective adjoint representation. Each $G$-orbit in $\PP^{13}\setminus D_{\ell}$ corresponds to a class of isomorphic smooth hyperplane sections of $\Omega$, that is, to a class of isomorphic Fano--Mukai fourfolds of genus 10. Indeed, according to Mukai's theorem~\cite[Theorem~0.9]{Muk89}, any isomorphism between two smooth hyperplane sections of $\Omega$ extends to an automorphism of $\Omega$ and the group $\Aut(\Omega)$ coincides with $G|_\Omega$. For a smooth hyperplane section $V=V(h)$ of $\Omega$ one has (see~\cite[Corollary~2.3.2]{PZ22})
\begin{equation}\label{eq:aut=stab}\Aut(V)=\Stab_G(\PP h).\end{equation}

Given a maximal torus $T$ of $G$ we let $\fh={\Lie}(T)$ be the corresponding Cartan subalgebra in $\fg$ and $\Delta\subset\fh^*$ be the corresponding root system. For a root $\alpha\in\Delta$ we let $\fg_\alpha$ be  the corresponding one-dimensional root subspace of $\fg$. The points $\PP\fg_{\alpha_i}\in\PP\fg$, $i=1,\ldots,6$ that correspond to  the 6 long roots $\alpha_1,\ldots,\alpha_6$ lie on $\Omega$, while the points  $\PP\fg_{\alpha_i}$, $i=7,\ldots,12$ that correspond to  the 6 short roots $\alpha_7,\ldots,\alpha_{12}$  lie on another nilpotent orbit $\Omega_\s$ of $G$. The dual hypersurface $D_\s=({\overline{\Omega_\s}})^*$ of $\overline{\Omega_\s}$ is also a  sextic. Thus, $D_\ell$  is dual to the $G$-orbit $\Omega$ of the projectivized long roots, while  $D_\s$ is dual to the orbit closure $\overline{\Omega_\s}$ where $\Omega_\s$ is the $G$-orbit of the projectivized short roots. This justifies our notation. 

Let $(D_t)_{t\in\PP^1}$ be the pencil of sextic hypersurfaces  in $\PP^{13}=\PP\fg$ generated by $D_\ell$ and $D_\s$. Abusing notation, we let $\ell,\s\in\PP^1$ stand for the values of $t$ which correspond to $D_\ell$ and $D_\s$, respectively. 
Any sextic hypersurface $D_t$ is invariant under the adjoint $G$-action on $\PP\fg$. Thus, any $G$-orbit in $\PP\fg$ is contained in $D_t$ for a suitable $t\in\PP^1$. 
There is the following description of  $G$-orbits in $\PP\fg$.

\begin{thm}[{\rm \cite[Lemma~1]{KR13},~\cite[Proposition~1.4]{PZ22}}]\label{thm:orbits}$\,$
\begin{enumerate}
\item[(i)] 
The base locus $D_\ell\cap D_\s$ of the pencil $(D_t)_{t\in\PP^1}$ coincides with the projectivized nilpotent cone of $\fg$. Thus, all the nilpotent $G$-orbits in $\PP\fg$, including $\Omega$ and $\Omega_\s$, are contained in $D_\ell\cap D_\s$.
\item[(ii)]
For any $t\in\PP^1\setminus\{\ell,\s\}$ the complement $D_t\setminus D_\ell$ coincides with the orbit of a point $\PP s\in\PP\fg$
where $s\in \fg$ is a regular semisimple element. 
\item[(iii)] $D_\s\setminus D_\ell$ is the union of two $G$-orbits $\Omega^\aaa$ and $\Omega^{\mathrm{sr}}$, where $\Omega^\aaa$ of dimension $12$ is open and dense in $D_\s\setminus D_\ell$ and 
$\Omega^{\mathrm{sr}}$ of dimension $10$ is closed in $D_\s\setminus D_\ell$. 
There exists a pair of commuting  elements $f,n\in\fg$, where $f$ is subregular semisimple, $n$ is nilpotent and $f+n$ is regular such that $\Omega^\aaa$ is the $G$-orbit of $\PP (f+n)$  and $\Omega^{\mathrm{sr}}$ is  the $G$-orbit of $\PP f$. \footnote{The upper index $\mathrm{sr}$  in $\Omega^{\mathrm{sr}}$ stands for ``subregular''.}
\end{enumerate}
\end{thm}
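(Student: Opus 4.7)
The plan is to reduce the orbit classification on $\PP\fg$ to invariant theory plus the Jordan decomposition. By Chevalley's restriction theorem, $\CC[\fg]^G\cong\CC[\fh]^\W$, which for type $\mathrm{G}_2$ is a polynomial ring on two homogeneous generators $I_2,I_6$ of degrees $2$ and $6$; hence the space of $G$-invariant sextics on $\fg$ is $2$-dimensional, spanned by $I_2^3$ and $I_6$. Since $D_\ell$ and $D_\s$ are distinct $G$-invariant sextic hypersurfaces (as projective duals of $G$-invariant closed subvarieties of $\PP\fg$), they form a basis of this space, so $(D_t)_{t\in\PP^1}$ is the complete pencil of $G$-invariant sextic hypersurfaces, and each $D_t$ is cut out by an equation $a_tI_2^3+b_tI_6=0$. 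Part~(i) is now immediate: the base locus $D_\ell\cap D_\s$ is set-theoretically $\{I_2=I_6=0\}$, which by Kostant's theorem on the nilpotent cone equals $\PP\mathcal{N}$, and every nilpotent $G$-orbit---in particular $\Omega$ and $\Omega_\s$---lies inside $\PP\mathcal{N}$.

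For parts~(ii) and~(iii) I would invoke the Jordan decomposition $h=h_s+h_n$ of an element $h\in\fg$, with commuting semisimple $h_s$ and nilpotent $h_n$. Since $I(h)=I(h_s)$ for every $G$-invariant $I$, the condition $\PP h\in D_t$ depends only on the $G$-orbit of $h_s$. Semisimple $G$-orbits correspond bijectively to $\W$-orbits on $\fh$; on the projectivization $\PP\fh/\W\cong\PP^1$ the rational function $[I_2^3:I_6]$ gives an isomorphism onto the pencil parameter $\PP^1$. The reflection walls of $\W$ in $\fh$ fall into two $\W$-orbits (three long-root walls and three short-root walls), whose images in $\PP\fh/\W$ give two critical values of this map, to be identified with $\ell$ and $\s$ respectively. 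If $t\neq\ell,\s$ and $\PP h\in D_t\setminus D_\ell$, then $h_s$ lies on no wall and is therefore regular semisimple; the centralizer $Z_\fg(h_s)=\fh$ contains no nonzero nilpotents, so $h_n=0$ and $h=h_s$. All regular semisimple elements with a fixed invariant ratio form a single $G$-orbit, proving~(ii).

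For~(iii), if $t=\s$ then $h_s$ lies on a short-root wall; after conjugation take $h_s=f$ with $\alpha(f)=0$ for a short root $\alpha$. The centralizer $\mathfrak{l}=Z_\fg(f)$ is a Levi subalgebra of shape $\mathfrak{sl}_2^{\mathrm{short}}\oplus\mathfrak{z}$, where $\mathfrak{sl}_2^{\mathrm{short}}$ is generated by $\fg_{\pm\alpha}$ and $\mathfrak{z}$ is the $1$-dimensional center. The nilpotent part $h_n$ commutes with $f$, so lies in $\mathfrak{l}$, and is either zero or a regular nilpotent of $\mathfrak{sl}_2^{\mathrm{short}}$; these form two $Z_G(f)$-orbits, yielding the two $G$-orbits $\Omega^{\mathrm{sr}}=G\cdot\PP f$ and $\Omega^\aaa=G\cdot\PP(f+n)$, where $n$ is a chosen regular nilpotent in $\mathfrak{sl}_2^{\mathrm{short}}$. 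Since $Z_\mathfrak{l}(n)=\CC n\oplus\mathfrak{z}$ has dimension $2=\rk(G)$, the element $f+n$ is regular, and $\dim\Omega^\aaa=14-2=12$, while $\dim\Omega^{\mathrm{sr}}=14-\dim Z_G(f)=14-4=10$. Thus $\Omega^\aaa$ is dense open in $D_\s\setminus D_\ell$ and $\Omega^{\mathrm{sr}}=\{h_n=0\}$ is its closed complement.

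The main obstacle is matching the two wall-type critical values of $[I_2^3:I_6]$ on $\PP\fh/\W$ with precisely $\ell$ and $\s$, rather than with arbitrary points of $\PP^1$. This step bridges the intrinsic Weyl-combinatorial picture with the extrinsic definition of $D_\ell$ and $D_\s$ as projective duals of specific nilpotent orbit closures; I would settle it by a direct Chevalley-basis computation of $I_2$ and $I_6$, evaluating on representative long- and short-root walls and matching the results against the defining equations of $D_\ell$ and $D_\s$.
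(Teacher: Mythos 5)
The paper offers no proof of Theorem~\ref{thm:orbits}; it is imported verbatim from \cite[Lemma~1]{KR13} and \cite[Proposition~1.4]{PZ22}, so there is no internal argument to compare yours against. Judged on its own, your strategy is sound and essentially complete: the identification of $(D_t)_{t\in\PP^1}$ with the full pencil of $G$-invariant sextics $aI_2^3+bI_6=0$ (the basic invariants of $\mathrm{G}_2$ having degrees $2$ and $6$), Kostant's description of the nilpotent cone for (i), and the Jordan-decomposition analysis for (ii)--(iii) --- including $Z_\fg(f)\cong\mathfrak{sl}_2\oplus\CC f$, the two nilpotent $Z_G(f)$-orbits therein, and the dimension counts $12$ and $10$ --- are all correct. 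Two small glosses: the degree-$6$ map $[I_2^3:I_6]\colon\PP\fh\to\PP\fh/\W\cong\PP^1$ has a \emph{third} branch point besides the two wall values, namely the value $\mathrm{r}$ with $D_{\mathrm{r}}=3Q$ coming from the isotropic directions of the Killing form on $\fh$; accordingly, your claim that projective classes of regular semisimple elements with a fixed invariant ratio form a single orbit should be checked fibrewise, i.e., that every fibre of $\PP\fh\to\PP\fh/\W$ over $t\neq\ell,\s$ --- including the length-two fibre over $\mathrm{r}$ --- is a single $\W$-orbit. Both points are easily verified and do not damage the argument.

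The one genuine issue is the step you flag yourself: matching the two wall-type critical values with $\ell$ and $\s$. Your proposed fix --- computing $I_2,I_6$ in a Chevalley basis and ``matching against the defining equations of $D_\ell$ and $D_\s$'' --- is close to circular, since producing those equations is exactly the hard part. A cleaner route avoids the invariants entirely: under the Killing-form identification $\PP\fg\cong\PP\fg^\vee$ one has $h\perp[\fg,e]$ if and only if $[h,e]=0$, and $[\fg,e]$ is the affine tangent space to the relevant nilpotent-orbit cone at $e$; hence $\PP h\in D_\ell$ (resp.\ a tangency point contributing to $D_\s$) exactly when $h$ centralizes a nonzero long-root-type (resp.\ short-root-type) nilpotent. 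For $h\in\fh$ the nilpotents in $Z_\fg(h)$ are spanned by the root vectors $e_\beta$ with $\beta(h)=0$, so the long-root walls lie in $\PP\fh\cap D_\ell$ and the short-root walls in $\PP\fh\cap D_\s$; since each intersection is a $\W$-invariant zero-cycle of degree $6$ on $\PP\fh\cong\PP^1$ and each set of walls is a full $\W$-orbit of length $3$, these containments already force the two fibres to be $2\Orb_\ell$ and $2\Orb_\s$ (consistently with what the paper records in Subsection~\xref{ss:1st}). With that substitution your proof closes.
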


\subsection{Automorphism groups and involutions of Fano--Mukai fourfolds}

According to \eqref{eq:aut=stab} the classification of the automorphism groups of Fano--Mukai fourfolds of genus 10  is ultimately related to the classification of non-nilpotent $G$-orbits in $\PP\fg$. Indeed,  due to the existence of the Mukai presentation, the latter groups are stabilizers of the $G$-orbits in $\PP\fg\setminus D_\ell$. These orbits are $D_t\setminus D_\ell$, 
$t\in\PP^1\setminus\{\ell,\s\}$,
$\Omega^\aaa$ and $\Omega^{\mathrm{sr}}$, see Theorem \ref{thm:orbits}. Accordingly, we have  the following description. 

\begin{thm}[{\rm \cite[Theorem~1.3]{PZ18}, \cite[Proposition~1.5]{PZ22}}]\label{thm:Aut} Let $V=V(h)$ be a Fano--Mukai fourfold of genus $10$ under a Mukai realization, where $\PP h\in\PP\fg \setminus D_\ell$. Then the identity component $\Aut^0(V)$ is as follows:
\[\label{eq:aut0} \Aut^0(V(h))={\rm Stab}_G(\PP h)\cong\begin{cases} 
\GL(2,\CC),&\,\,\,
\,\,\, \PP h\in \Omega^{\mathrm sr}\\
\Ga\times\Gm,&\,\,\,
\,\,\, \PP h\in \Omega^\aaa\\
\Gm^2,&\,\,\,
\,\,\, \PP h\in \PP\fg\setminus (D_\ell\cup D_\s)=\bigcup_{t\neq\s,\ell} (D_t\setminus D_\ell)
\end{cases}\]
Consequently, there is a unique up to isomorphism Fano--Mukai fourfold $V$ of genus $10$ with 
$\Aut^0(V)\cong\GL(2,\CC)$ (resp. $\Aut^0(V)\cong\Ga\times\Gm$), 
while the isomorphism classes of Fano--Mukai fourfolds with $\Aut^0(V)\cong\Gm^2$ form a one-parameter family parameterized by the points $t\in\PP^1\setminus\{\ell,\s\}$.
\end{thm}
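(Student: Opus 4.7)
My plan is to combine the identity $\Aut(V(h))=\Stab_G(\PP h)$ from~\eqref{eq:aut=stab} with the $G$-orbit classification in Theorem~\ref{thm:orbits}. Since $\Aut^0(V(h))$ depends only on the $G$-orbit of $\PP h$, it is enough to pick a convenient representative in each of the three orbit classes of $\PP\fg\setminus D_\ell$ supplied by Theorem~\ref{thm:orbits}(ii)--(iii) and to compute the identity component of its line stabilizer. The preliminary observation I would record is that $Z_G(h)\subset\Stab_G(\PP h)$ with quotient embedded in $\Gm$ via the character through which $\Stab_G(\PP h)$ acts on the line $\CC h$; hence $\Stab_G(\PP h)^0$ differs from $Z_G(h)^0$ by at most a one-dimensional toral factor, which I would check vanishes in each of the three cases.

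For $\PP s\in D_t\setminus D_\ell$ with $t\neq\ell,\s$, the element $s$ is regular semisimple, so $Z_G(s)=T$ is a maximal torus; moreover any $g\in\Stab_G(\PP s)$ satisfies $g\cdot s=\lambda s$ and therefore normalizes $Z_G(s)=Z_G(\lambda s)$, yielding $\Stab_G(\PP s)\subset N_G(T)$ and $\Stab_G(\PP s)^0=T\cong\Gm^2$. For $\PP f\in\Omega^{\mathrm{sr}}$, the orbit-codimension formula gives $\dim Z_G(f)=14-10=4$, and a direct inspection of the ${\rm G}_2$ root system identifies $Z_G(f)^0$ as the Levi factor of a maximal parabolic (a subregular semisimple element lies in the kernel of exactly one root), producing $Z_G(f)^0\cong\GL(2,\CC)$. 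For $\PP(f+n)\in\Omega^\aaa$, the commutativity of $f$ and $n$ gives $Z_G(f+n)=Z_G(f)\cap Z_G(n)$; inside $Z_G(f)^0\cong\GL(2,\CC)$ the nilpotent $n$ lies in a root space of the standard $\SL_2$, and its centralizer in $\GL_2$ is the direct product of the centre $\Gm$ and the unipotent subgroup $\Ga$ fixing the eigenline of $n$, namely $\Ga\times\Gm$; the dimension check $14-12=2$ confirms the identification.

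The concluding count of isomorphism classes follows at once from Mukai's theorem recalled in Subsection~\ref{ss:Mukai}: $V(h_1)\cong V(h_2)$ if and only if $\PP h_1$ and $\PP h_2$ are $G$-conjugate. Since $\Omega^{\mathrm{sr}}$ and $\Omega^\aaa$ are each a single $G$-orbit, they correspond to unique isomorphism classes of fourfolds, while the $G$-orbits inside $\PP\fg\setminus(D_\ell\cup D_\s)$ are precisely the strata $D_t\setminus D_\ell$, producing the one-parameter family indexed by $t\in\PP^1\setminus\{\ell,\s\}$. The main obstacle I anticipate is the subregular case: pinning down $Z_G(f)^0$ as $\GL(2,\CC)$ rather than some other rank-two reductive group of dimension $4$ (such as $\SL(2,\CC)\times\Gm$) requires a ${\rm G}_2$-specific root-system computation, and in the mixed case the identification of $\Stab_G(\PP(f+n))^0$ as the direct product $\Ga\times\Gm$ (rather than a nontrivial semidirect product $\Ga\rtimes\Gm$) rests on verifying that the central $\Gm$ of the $\GL_2$-Levi centralizes rather than scales the chosen nilpotent~$n$.
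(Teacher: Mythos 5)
Your proposal is correct and follows exactly the route the paper intends: this theorem is imported from \cite[Theorem~1.3]{PZ18} and \cite[Proposition~1.5]{PZ22} without proof here, and the surrounding text (the paragraph preceding the statement) already reduces it to computing $\Stab_G(\PP h)^0$ on a representative of each $G$-orbit in $\PP\fg\setminus D_\ell$ furnished by Theorem~\xref{thm:orbits}, which is precisely what you do. Your identifications (maximal torus in the regular semisimple case, the $\GL(2,\CC)$ Levi of a maximal parabolic of ${\rm G}_2$ in the subregular case, and $Z_{\GL_2}(n)\cong\Ga\times\Gm$ via uniqueness of the Jordan decomposition in the mixed case), together with the dimension counts showing $\Stab_G(\PP h)^0=Z_G(h)^0$, and the final appeal to Mukai's theorem for the orbit-to-isomorphism-class correspondence, match the cited arguments.
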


The geometry of a Fano--Mukai fourfold $V$ and the group $\Aut(V)$ are ultimately related. 

\begin{prop}[{\rm \cite[Proposition 11.1]{PZ18}}]\label{prop:cubic-cones} A Fano--Mukai fourfold $V$ carries 
\begin{itemize}\item
exactly $6$ cubic cones if $\Aut^0(V)\cong\Gm^2$, 
\item exactly $4$ cubic cones if $\Aut^0(V)\cong\Ga\times\Gm$,
\item a one-parameter family of cubic cones if $\Aut^0(V)\cong\GL(2,\CC)$. 
\end{itemize}
In the first two cases every cubic cone on $V$ is $\Aut^0(V)$-invariant. 
In the last case there are exactly two $\Aut^0(V)$-invariant cubic cones on $V$, they are disjoint and belong to different connected components $\SSS_1$ and $\SSS_2$ of $\SSS$. 
\end{prop}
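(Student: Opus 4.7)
The plan is to analyze the cubic cones on $V$ by studying the $\Aut^0(V)$-action on the Hilbert scheme of cubic scrolls $\SSS = \SSS_1 \sqcup \SSS_2$, each $\SSS_i \cong \PP^2$ by Proposition~\ref{prop:prelim-scrolls}(d). Since $\tau$ interchanges the components and $\Aut^0(V)$ is connected, each $\SSS_i$ is $\Aut^0(V)$-invariant. Being a cone is a closed condition on the universal family, so the locus of cubic cones $\cC = \cC_1 \sqcup \cC_2$ is a closed $\Aut(V)$-invariant subscheme with $\tau(\cC_1) = \cC_2$, and it suffices to determine $\cC_1 \subset \SSS_1 \cong \PP^2$ and double the count.

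First I would identify the induced linear $\Aut^0(V)$-action on $\SSS_1 \cong \PP^2$ in each case, using the Mukai realization and Theorem~\ref{thm:Aut}. For $\Aut^0(V) \cong \Gm^2$, the three weights of the induced $3$-dimensional representation are distinct, giving three isolated fixed points on $\PP^2$. For $\Aut^0(V) \cong \Ga \times \Gm$, the action degenerates (the $\Ga$-factor identifies a pair of weight eigenlines), leaving two isolated fixed points. For $\Aut^0(V) \cong \GL(2, \CC)$, the representation decomposes (up to a character twist) as $\mathrm{std}_{\GL(2)} \oplus \mathrm{triv}$, yielding one isolated fixed point and a $1$-dimensional orbit whose closure is a line $\PP^1 \subset \PP^2$, with the complement a $2$-dimensional open orbit.

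Next I would identify $\cC_1$ within this orbit structure as a proper closed $\Aut^0(V)$-invariant subscheme of $\SSS_1$ (proper because the generic cubic scroll on $V$ is smooth). In cases (a) and (b), $\cC_1$ can contain no positive-dimensional orbit (otherwise $\cC_1 = \SSS_1$), so $\cC_1$ is a finite union of $\Aut^0(V)$-fixed points. A direct verification that each such fixed point is a cone---by exhibiting its vertex as an $\Aut^0(V)$-fixed point of the branching divisor $\mathcal{B} \subset V$ through which infinitely many lines of $V$ pass (Proposition~\ref{prop:prelim-cones}(j))---gives $|\cC_1| = 3$ and $|\cC_1| = 2$ respectively, hence totals $6$ and $4$; $\Aut^0(V)$-invariance of all cubic cones is automatic since they are all fixed points. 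In case (c), the unique $1$-dimensional $\Aut^0(V)$-invariant subvariety of $\SSS_1$ is the orbit closure $\PP^1$; producing at least one cubic cone in this orbit (for instance, by degeneration from case (b) or a direct $\mathrm{G}_2$-theoretic construction from the Lie-algebra data) forces $\cC_1 = \PP^1$, a $1$-parameter family, and the unique $\Aut^0(V)$-fixed cubic cone in $\SSS_1$ is the isolated $\GL(2,\CC)$-fixed point lying on this $\PP^1$.

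For the disjointness in case (c): if the two $\Aut^0(V)$-fixed cubic cones $S_1, S_2$ shared a ruling $l$ (necessarily unique by Proposition~\ref{prop:prelim-scrolls}(f)), then $l \in \Sigma$ would be $\Aut^0(V)$-fixed; a direct check that no $\GL(2,\CC)$-invariant line lies on $V$ yields $S_1 \cap S_2 = \emptyset$. The main obstacle I anticipate is the step of matching group-theoretic fixed-point data to the singularity condition, that is, verifying that the $\Aut^0(V)$-fixed points of $\SSS_i$ (and the $1$-dimensional orbit in case (c)) correspond precisely to singular scrolls rather than to smooth $\Aut^0(V)$-invariant scrolls. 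This is best carried out either via explicit coordinates from the $\mathrm{G}_2$-representation or through the vertex map and the universal family $\LLL(V) \to V$, identifying $\Aut^0(V)$-fixed points on $V$ where the fiber of $s$ has positive dimension with vertices of cubic cones.
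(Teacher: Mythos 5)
First, a point of comparison: the paper gives no proof of this statement at all --- it is quoted verbatim from \cite[Proposition~11.1]{PZ18}, so your reconstruction has to be measured against the argument in that reference, which is a substantially geometric analysis (Mukai realization, torus fixed points on $\Omega$, identification of the vertices of cubic cones). Your skeleton --- study the induced $\Aut^0(V)$-action on each component $\SSS_i\cong\PP^2$ and locate the closed invariant locus $\cC_i$ of cones --- is a reasonable starting point and close in spirit to parts of that proof, but as written it has genuine gaps.

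The most concrete one is the step ``in cases (a) and (b), $\cC_1$ can contain no positive-dimensional orbit (otherwise $\cC_1=\SSS_1$)''. This is false: for a $\Gm^2$-action on $\PP^2$ with three distinct weights the one-dimensional orbits are the coordinate lines minus their fixed points, and these are \emph{not} dense --- a coordinate line is a proper, closed, invariant curve; the analogous phenomenon occurs for $\Ga\times\Gm$. So properness of $\cC_1$ does not yield finiteness, and ruling out a one-parameter family of cones in these cases (which Proposition~\ref{prop:prelim-cones}(k) explicitly allows a priori) needs a separate argument. Second, case (c) is internally inconsistent: a linear $\GL(2,\CC)$-action on $\PP^2$ with exactly one fixed point comes from a representation of type $\mathrm{std}\oplus\mathrm{triv}$ up to twist, and then the isolated fixed point does \emph{not} lie on the unique invariant line $\PP(\mathrm{std})$, contrary to your assertion that the invariant cone is ``the isolated fixed point lying on this $\PP^1$''; the only invariant irreducible curve is $\PP(\mathrm{std})$ itself, so one would have to show $\cC_1$ is the disjoint union of that line and the fixed point (or determine the representation differently). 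Finally, the mathematical core --- that the $\Aut^0(V)$-fixed points of $\SSS_i$ really are cones rather than smooth invariant scrolls, and the existence of at least one cone on the one-dimensional orbit in case (c) --- is precisely what you defer as ``the main obstacle''; without it none of the counts $6$, $4$, or the existence of the family is established. The proposal is therefore an outline with the hardest steps missing, not a proof.
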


Concerning the discrete part of the automorphism group $\Aut(V)$ and its action on $V$ we have the following results. 

\begin{thm}[{\rm \cite[Theorem~A, Proposition~3.11]{PZ22}}]\label{thm:discrete} $\,$
\begin{enumerate}
\item[(i)] 
If $\Aut^0(V)$ is one of the groups $\GL(2,\CC)$ and $\Ga\times\Gm$ then $\Aut(V)=\Aut^0(V)\rtimes\ZZ/2\ZZ$. The same holds in the case $\Aut^0(V)\cong \Gm^2$ except for $V_{\rm r}=V(h_{\rm r})$ with $\PP h_{\rm r}\in D_{\rm r}=3Q$ where $Q\subset\PP\fg$ is the quadric defined by the Killing  form. In the latter case $\Aut(V_{\rm r})\cong \Gm^2\rtimes\ZZ/6\ZZ$.  
\item[(ii)]
Let  $\tau$ be the generator of the factor $\ZZ/2\ZZ$ in the former cases and the unique order 2 element $\tau=\zeta^3$ in the latter case, where $\zeta$ is a generator of the factor $\ZZ/6\ZZ$. Then $\tau$ is an involution on $V$ which interchanges any pair of disjoint $\Aut^0(V)$-invariant cubic cones on $V$. There is precisely one such pair of cubic cones in the cases where $\Aut^0(V)$ is one of the groups $\GL(2,\CC)$ and $\Ga\times\Gm$, and precisely $3$  such pairs in the case $\Aut^0(V)\cong\Gm^2$. 
\item[(iii)] 
If $\Aut^0(V)\cong\Gm^2$ then the $6$ cubic cones  on $V$ are naturally arranged in a cycle $\sigma=(S_1,\ldots,S_6)$ where $S_i$ and $S_j$ are neighbors if and only if they share a common ruling, and are opposite (that is, $j=i+ 3 \mod 6$) if and only if they are disjoint, and therefore interchanged by the involution $\tau$. For $V=V_{\mathrm r}$
any element of the factor $\ZZ/6\ZZ$ of $\Aut(V_{\mathrm r})$ acts on $\sigma$ via a cyclic shift. 
\item[(iv)] The involution $\tau\in\Aut(V)\setminus\Aut^0(V)$ can be chosen in such a way that
the action of $\tau$ on $\Aut^0(V)$ by conjugation  is the inversion $g\mapsto g^{-1}$ if $\Aut^0(V)$ is one of the groups $\Gm^2$ and $\Ga\times\Gm$ and the Cartan involution $c\colon g\mapsto (g^t)^{-1}$ if $\Aut^0(V)=\GL(2,\CC)$, where $g\mapsto g^t$ stands for the matrix transposition. 
\end{enumerate}
\end{thm}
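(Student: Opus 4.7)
My plan is first to identify $\tau$ as a central Weyl involution, then to compute $V^\tau$ from the $\tau$-eigenspace decomposition of $\fg$, and finally to construct $\Pi$ from the induced involution on the Hilbert scheme of lines. Under the Mukai realization \eqref{eq:aut=stab}, $\tau$ becomes an order-$2$ element of $G$. The complex group $G$ of type $\mathrm{G}_2$ has trivial center and, up to conjugacy, a unique non-trivial holomorphic involution (its fixed subgroup being $\mathrm{SO}(4)$). Since the longest Weyl element $w_0$ acts as $-\id$ on the Cartan $\fh$, any order-$2$ lift of $w_0$ to $N_G(T)$ is an involution in this unique class, so $\tau$ is conjugate in $G$ to such a lift, i.e.\ is a central Weyl involution. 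After this conjugation $\fg$ splits as $\fg^+\oplus \fg^-$ with $\fg^+=\bigoplus_{\alpha>0}\CC(e_\alpha+e_{-\alpha})$ of dimension $6$ and $\fg^-=\fh\oplus \bigoplus_{\alpha>0}\CC(e_\alpha-e_{-\alpha})$ of dimension $8$; these summands are Killing-orthogonal.

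Because $\tau$ preserves the Killing form and leaves $V=V(h)$ stable, $\tau(h)=\pm h$. Matching the prescribed dimensions of $\langle E^\pm\rangle$ forces $h\in\fg^-$, so that $\PP\fg^+\subset h^\perp$ is a $\PP^5$ and $h^\perp\cap\PP\fg^-$ is a $\PP^6$. Since $(\PP\fg)^\tau=\PP\fg^+\sqcup\PP\fg^-$, I obtain
\[V^\tau \,=\,(\Omega\cap\PP\fg^+)\,\sqcup\,(\Omega\cap h^\perp\cap\PP\fg^-)\,=:\,E^+\sqcup E^-,\]
automatically disjoint. Restricting the quadratic Joseph equations cutting out the minimal nilpotent orbit $\Omega\subset\PP\fg$ to $\fg^\pm$ and simplifying in the $\mathrm{G}_2$-root basis identifies $E^-$ as a smooth rational normal sextic in $\PP^6$ and $E^+$ as a smooth rational sextic non-degenerately embedded in $\PP^5$. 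The normal bundle of $E^-$ is then the classical $\cO_{\PP^1}(8)^{\oplus 5}$ of a rational normal sextic in $\PP^6$; for $E^+$ I would exhibit it as a linear projection of a rational normal sextic $\widetilde E\subset\PP^6$ from a suitable point outside $\widetilde E$ and analyze the resulting short exact sequence on normal bundles to extract the splitting $\cO(8)^{\oplus 2}\oplus \cO(9)^{\oplus 2}$ of the forced total degree $34$.

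For the scroll, Proposition~\ref{prop:prelim-lines}(a) realizes $\Sigma(V)$ as a smooth hyperplane section of $\PP^2\times\PP^2\subset\PP^7$ on which $\tau$ swaps factors; hence $\Sigma^\tau=\Sigma\cap\Delta$ with $\Delta\cong\PP^2$ the diagonal, and under $v_2\colon\Delta\hookrightarrow\PP^5$ this becomes a hyperplane section of the Veronese surface, i.e.\ a smooth conic $C\cong\PP^1$ (smoothness requiring a case-by-case check on $V$). Pulling back the universal family yields $\widetilde\Pi=\rho^{-1}(C)\to C$. Each $\tau$-invariant line $l$ meets $V^\tau$ in the two fixed points of $\tau|_l$, so $E^\pm$ lift to two disjoint sections of $\widetilde\Pi\to C$, forcing $\widetilde\Pi\cong\PP^1\times\PP^1$. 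Two distinct $\tau$-invariant lines can meet only at $\tau$-fixed points (else $l\cap l'\supseteq\{p,\tau(p)\}$, contradicting $|l\cap l'|\leq 1$); combined with a dimension count that rules out two rulings meeting on $E^\pm$, this makes $s|_{\widetilde\Pi}$ injective and hence an isomorphism onto its image $\Pi\subset V$, giving (i). For (ii), the pullback of $\cO_{\PP^{12}}(1)$ has bidegree $(1,b)$ on $\PP^1\times\PP^1$ because each ruling is a line; comparing against $E^+\subset\Pi$ gives $b=6$ and $\deg\Pi=12$, and $\langle E^+,E^-\rangle=\PP^{12}$ yields linear nondegeneracy. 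For (iii), the two disjoint sections $E^\pm$ of $\pr_1\colon\Pi\to C$ are placed in constant-section form by a suitable identification $\Pi\cong\PP^1\times\PP^1$.

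The main obstacle I anticipate is pinning down the normal bundle splitting for $E^+$: the total degree $34$ is forced by the Euler sequence, but ruling out alternatives like $\cO(7)\oplus \cO(9)^{\oplus 3}$ requires more than dimension counting. I expect this to follow either from the $\mathrm{G}_2$-equivariance of the embedding $E^+\hookrightarrow\PP\fg^+$ under the centralizer of $\tau$ (decomposing the normal bundle into line bundles with weights readable from root data), or from an explicit analysis of the projection presenting $E^+$ from a rational normal sextic in $\PP^6$.
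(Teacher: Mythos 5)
There is a fundamental mismatch here: your proposal does not address the statement at hand. Theorem~\ref{thm:discrete} asserts (i) the group-theoretic structure $\Aut(V)=\Aut^0(V)\rtimes\ZZ/2\ZZ$, with the single exception $V_{\rm r}=V(h_{\rm r})$, $\PP h_{\rm r}\in 3Q$, where $\Aut(V_{\rm r})\cong\Gm^2\rtimes\ZZ/6\ZZ$; (ii) that $\tau$ interchanges every pair of disjoint $\Aut^0(V)$-invariant cubic cones, together with the count of such pairs (one pair in the $\GL(2,\CC)$ and $\Ga\times\Gm$ cases, three pairs in the $\Gm^2$ case); (iii) the cyclic arrangement of the six cubic cones when $\Aut^0(V)\cong\Gm^2$, with neighbors sharing a ruling and opposite cones disjoint; and (iv) that $\tau$ can be normalized so that conjugation by $\tau$ is inversion on $\Aut^0(V)$ in the abelian cases and the Cartan involution in the $\GL(2,\CC)$ case. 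What you have written is instead an outline of a proof of Theorem~\ref{thm:mthm}: the identification of $\tau$ as a central Weyl involution, the computation of $V^\tau=E^+\sqcup E^-$ from the eigenspace decomposition $\fg=\fg^+\oplus\fg^-$, the normal bundles of $E^\pm$, and the construction of the scroll $\Pi$ from $\Sigma^\tau$. None of the four assertions of Theorem~\ref{thm:discrete} is established, or even mentioned, in your argument. (The paper itself does not reprove this theorem; it quotes it from \cite[Theorem~A, Proposition~3.11]{PZ22}, where the proof proceeds by computing the stabilizers $\Stab_G(\PP h)$ for $\PP h$ in each non-nilpotent orbit type of Theorem~\ref{thm:orbits} and by the cubic-cone geometry of Proposition~\ref{prop:cubic-cones}.)

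The only point of contact is your observation that a lift of the longest Weyl element acts as $-\mathrm{id}$ on $\fh=\Lie(T)$, which would indeed give the inversion on $T\cong\Gm^2$ required in part (iv) for that case. But even this is not developed: you do not treat the $\Ga\times\Gm$ or $\GL(2,\CC)$ cases of (iv), you give no argument for the semidirect product decompositions in (i) (in particular nothing explains the exceptional $\ZZ/6\ZZ$ factor for $V_{\rm r}$, which requires identifying the extra symmetry of the orbit $D_{\rm r}\setminus D_\ell$ with $D_{\rm r}=3Q$), and cubic cones --- the subject of (ii) and (iii) --- do not appear in your text at all. To prove this theorem you would need to start from \eqref{eq:aut=stab}, compute the full stabilizer (not just its identity component) of $\PP h$ for $\PP h$ in $\Omega^{\mathrm{sr}}$, $\Omega^{\mathrm a}$ and $D_t\setminus D_\ell$, and then combine this with the classification of cubic cones in Proposition~\ref{prop:cubic-cones} and the incidence structure of Proposition~\ref{prop:prelim-scrolls}(f).
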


We need in the sequel the following elementary facts. 

\begin{prop}\label{prop:aut-GL2} $\,$
\begin{enumerate}\item[(i)] 
Any involution $\tau\in\Aut(\GL(2,\CC))\setminus {\rm Inn}(\GL(2,\CC))$ is conjugate in $\Aut(\GL(2,\CC))$ to the Cartan involution $c$.
\item[(ii)] Let $H$ be a group  admitting square roots, that is, for any $h\in H$ there exists $g\in H$ such that $h=g^2$. Consider the semidirect product $\tilde H=H\rtimes\ZZ/2\ZZ$ where the generator $c$ of the factor $\ZZ/2\ZZ$ acts on $H$ via inversion. Then for any $h\in H$ the element $ch\in \tilde H\setminus H$ has order 2 and is conjugate to $c$.
\item[(iii)] Let $\tilde H=H\rtimes\ZZ/2\ZZ$ where $H=\Ga\times\Gm$ and the generator $c$ of the factor $\ZZ/2\ZZ$ acts on $H$ via inversion. Then any element $\tau\in \tilde H\setminus H$ of order 2 is conjugate to $c$. 
\item[(iv)] Let $\tilde T=T\rtimes\ZZ/2\ZZ$ where $T=\Gm^2$ and the generator $c$ of the factor $\ZZ/2\ZZ$ acts on  $T$ via inversion. Then any element $\tau\in \tilde T\setminus T$ of order 2 is conjugate to $c$. 
\item[(v)] 
There exist exactly two different (conjugated) subgroups $H^\pm$ of $\GL(2,\CC)$ such that 
\begin{enumerate}
\item $H^\pm\cong\Ga\times\Gm$;
\item $H^\pm$ is invariant under the Cartan involution $c$; 
\item $c|_{H^\pm}$ is the inversion $g\mapsto g^{-1}$. 
\end{enumerate}
\end{enumerate}
\end{prop}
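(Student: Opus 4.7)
\medskip

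\textbf{Plan.} I handle the five parts in order, using (ii) as a general lemma that dispatches (iii) and (iv) immediately.

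For (i), the key input is the standard fact that $\mathrm{Out}(\GL(2,\CC))\cong\ZZ/2\ZZ$ is generated by the class of $c$, so any non-inner $\tau$ has the form $\tau = c\circ\mathrm{Inn}(a)$ for some $a\in\GL(2,\CC)$. Using the commutation $c\circ\mathrm{Inn}(x) = \mathrm{Inn}((x^t)^{-1})\circ c$, one computes $\tau^2 = \mathrm{Inn}((a^t)^{-1} a)$, so the involution condition forces $(a^t)^{-1}a\in Z(\GL(2,\CC))=\CC^*\cdot I$, whence $a^t = \lambda a$ with $\lambda^2 = 1$, i.e.\ $\lambda = \pm 1$. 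In the symmetric case $a = a^t$, one writes $a = gg^t$ for some $g\in\GL(2,\CC)$ (possible since over $\CC$ every nondegenerate symmetric bilinear form is equivalent to the standard one); a short calculation then yields $\mathrm{Inn}(g^{-1})\circ\tau\circ\mathrm{Inn}(g) = c$.

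For (ii), the defining relation $chc^{-1} = h^{-1}$ rewrites as $gc = cg^{-1}$ for every $g\in H$, whence
\[
(ch)^2 \;=\; c\cdot(hc)\cdot h \;=\; c\cdot(ch^{-1})\cdot h \;=\; c^2 h^{-1}h \;=\; e.
\]
For the conjugacy claim, choose $g\in H$ with $g^2 = h^{-1}$ (possible by the square-root hypothesis); then
\[
gcg^{-1} \;=\; g\cdot(cg^{-1}) \;=\; g\cdot gc \;=\; g^2 c \;=\; h^{-1}c \;=\; ch,
\]
so $ch$ is conjugate to $c$. Parts (iii) and (iv) are immediate consequences of (ii), because both $\Ga\times\Gm$ and $\Gm^2$ admit square roots over the algebraically closed field $\CC$ of characteristic zero.

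For (v), I argue at the Lie-algebra level. A commutative subgroup $H\subset\GL(2,\CC)$ of type $\Ga\times\Gm$ has Lie algebra $\fh\subset\mathfrak{gl}_2$ spanned by a nonzero nilpotent $e$ and a semisimple $s$ with $[e,s]=0$. Since any nonzero nilpotent in $\mathfrak{gl}_2$ is regular, its centraliser equals $\CC e\oplus\CC I$, so $\fh = \CC e\oplus\CC I$, and hence the $\Gm$-factor of $H$ coincides with the centre of $\GL(2,\CC)$. Differentiating $c$ gives $X\mapsto -X^t$, and the requirement that $c$ act on $H$ by inversion translates to $-e^t = -e$, that is, $e$ is symmetric. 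Imposing $e^2 = 0$ and $\operatorname{tr}(e)=0$ on a symmetric $2\times 2$ matrix produces exactly two projectively distinct directions $\CC\cdot\begin{pmatrix} 1 & \pm i\\ \pm i & -1\end{pmatrix}$, yielding the two subgroups $H^\pm$; they are interchanged by conjugation by $\mathrm{diag}(1,-1)$, hence are $\GL(2,\CC)$-conjugate.

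The most delicate point is the symmetric/antisymmetric dichotomy in part (i): the antisymmetric case $a^t = -a$ (in dimension $2$, $a$ is a scalar multiple of $J=\bigl(\begin{smallmatrix} 0 & 1\\ -1 & 0\end{smallmatrix}\bigr)$) produces a bona fide non-inner involution $g\mapsto\det(g)^{-1}g$ whose fixed subgroup is $\SL(2,\CC)$, not isomorphic to $\mathrm{Fix}(c)=\mathrm{O}(2,\CC)$. Ruling it out requires an additional observation specific to the $\tau$'s arising from Theorem~\ref{thm:discrete}, for instance that $\tau$ must act non-trivially on $\SL(2,\CC)\subset\Aut^0(V)$; the remaining parts are then formal group theory.
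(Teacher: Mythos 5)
Your parts (ii)--(iv) coincide with the paper's argument, and your part (v) is essentially the paper's explicit computation (the paper adds a more conceptual count via Borel subgroups and the two fixed points of $c$ on $\GL(2,\CC)/B\cong\PP^1$, but lands on the same two unipotent directions; incidentally your matrices $\bigl(\begin{smallmatrix}1&\pm i\\ \pm i&-1\end{smallmatrix}\bigr)$ are the correct trace-zero nilpotents, whereas the ones displayed in the paper have trace $2$ and must be a typo).

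The substantive point is part (i), and the ``delicate point'' you flag is not a loose end you failed to tie up --- it is a genuine counterexample to the statement as written. The map $\tau_0\colon g\mapsto \det(g)^{-1}g$ equals $\Ad(J)\circ c$ with $J$ antisymmetric; it is an automorphism of order $2$, it is not inner (it restricts to the identity on $\SL(2,\CC)$ without being the identity, which no nontrivial inner automorphism does), and it is not conjugate to $c$, since its fixed subgroup $\SL(2,\CC)$ is not isomorphic to $\mathrm{Fix}(c)=\mathrm{O}(2,\CC)$. Thus there are two conjugacy classes of involutions in $\Aut(\GL(2,\CC))\setminus\mathrm{Inn}(\GL(2,\CC))$ and (i) is false as literally stated; correspondingly, the paper's own proof silently discards the case $a=-a^t$ when deducing ``$a=a^t$'' from $\tau^2=1$, and its claim that finite order forces $k=0$ in Dieudonn\'e's description fails for $k=-1$. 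The correct repair --- and all that Corollary \ref{cor:aut-GL2} actually uses --- is to state (i) for order-two \emph{elements} of the concrete group $\tilde L=\GL(2,\CC)\rtimes_c\ZZ/2\ZZ$ rather than for abstract automorphisms: there $(ac)^2=a\,(a^t)^{-1}$ must equal the identity element, which forces $a=a^t$ exactly, with no central ambiguity, and then writing $a=bb^t$ gives $bcb^{-1}=bb^t c=ac$. With that reformulation your symmetric-case computation closes the argument (note only that under your convention $\tau=c\circ\mathrm{Inn}(a)=\mathrm{Inn}(a^{-1})\circ c$, the matrix to be factored as $hh^t$ is $a^{-1}$ rather than $a$); no further input from Theorem \ref{thm:discrete} is needed beyond the fact that $\tau$ there is an order-two group element of this semidirect product.
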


\begin{proof} (i) Let us first show the assertion of (i) for the derived subgroup $\SL(2,\CC)$ of $\GL(2,\CC)$. This group is invariant under the $\Aut(\GL(2,\CC))$-action and any automorphism of $\SL(2,\CC)$ extends to an automorphism of $\GL(2,\CC)$. 

It is known that $\Aut(\SL(2,\CC))\cong {\rm Inn}(\SL(2,\CC))\rtimes\ZZ/2\ZZ$, see~\cite[Theorem~10.6.10]{Pro06}. Since $c|_{\SL(2,\CC)}\in\Aut(\SL(2,\CC))\setminus {\rm Inn}(\SL(2,\CC))$, we may suppose that the factor  $\ZZ/2\ZZ$ is generated by $c$. Then $\tau=\Ad(a)\cdot c$ for some $a\in \SL(2,\CC)$ where $\Ad(a)\colon f\mapsto afa^{-1}$ for $f\in\SL(2,\CC)$. Assume that 
$\tau$ is a conjugate of $c$ in $\Aut(\SL(2,\CC))$, that is,  for some $b\in\SL(2,\CC)$ we have
\[\tau=\Ad(b)\cdot c\cdot\Ad(b^{-1})=\Ad(bb^t)\cdot c.\] 

Now, from $\tau^2=1$ and $\tau=\Ad(a)\cdot c$ we deduce that $a=a^t$, that is, $a$ is a symmetric matrix. Then $a$ can be written as $a=bb^t$ for some $b\in\SL(2,\CC)$. It follows by the preceding formula that $\tau=\Ad(\Ad(b))(c)$, that is, $\tau$ is conjugate to $c$  in $\Aut(\SL(2,\CC))$, as stated. 

Any automorphism $\alpha\in\Aut(\GL(2,\CC))$ is a composition of an inner automorphism and the multiplication by a character $\det^k$ of $\GL(2,\CC)$ for some $k\in\ZZ$~\cite[Ch.~4, \S~1]{Die55}. However, if $\alpha$ has a finite order then $k=0$. Now the preceding argument applied mutatis mutandis  shows that any element $\alpha\in\Aut(\GL(2,\CC))\setminus {\rm Inn}(\GL(2,\CC))$ of order 2 is conjugate to the Cartan involution $c$ in $\Aut(\GL(2,\CC))$. 
 
(ii) For $h\in H$ the element $ch\in \tilde H\setminus H$  satisfies $ch=h^{-1}c$. Therefore, for $g,h\in H$ the equality $ch=gcg^{-1}$ holds if and only if $g^2=h^{-1}$. Since $H=\Ga\times\Gm$ admits square roots, it follows that for any $h\in H$ there exists $g\in H$ such that $ch=gcg^{-1}$ in $\tilde H$, that is, $ch$ is a conjugate of $c$  in $\tilde H$. 

Now (iii)--(iv) follow since both groups $H=\Ga\times\Gm$ and $T=\Gm^2$ admit square roots. 
 
 (v) Let $H$ be a subgroup of $\GL(2,\CC)$ isomorphic to $\Ga\times\Gm$. Then $H$ is contained in a Borel subgroup $B$ of $\GL(2,\CC)$. The unipotent $\Ga$-factor of $H$ coincides with the unipotent radical $U$ of $B$ and the $\Gm$-factor of $H$ coincides with the center $z(B)=z(\GL(2,\CC))$. Hence $B={\rm Norm}_{\GL(2,\CC)}(U)$ is the unique Borel subgroup containing $H$.
 
 The induced action of the outer automorphism $c$ of $\GL(2,\CC)$ by an involution on the flag variety $\GL(2,\CC)/B=\PP^1$ is effective and has exactly two fixed points, say $B^\pm$. It follows by the preceding observations that there are exactly two $c$-invariant subgroups $H^{\pm}\subset B^\pm$ with desired properties. Let us find these subgroups explicitly. 
 
 The $c$-action on the Lie algebra $\mathfrak{gl}(2,\CC)$ is given by $h\mapsto -h^t$. 
 Up to proportionality, there are exactly two nonzero elements $n$ of $\mathfrak{gl}(2,\CC)$ satisfying $n^2=0$ and $c(n)=-n^t=\lambda n$, namely,
 \[n^\pm=\begin{pmatrix} 1 & \pm i\\ \pm i & 1\end{pmatrix}\]
 where in fact $\lambda=-1$. 
 Therefore, for the unipotent radical of the subgroup $H^{\pm}$ we have
 \[R_u(H^\pm)=\{\exp(tn^\pm),\quad t\in\CC\}=\left\{\,
 \begin{pmatrix} 1+t & \pm it\\ \pm it & 1-t
 \end{pmatrix},\quad t\in\CC\right\}\]
 and the $\Gm$-factor of $H^\pm$ coincides with the center of $\GL(2,\CC)$.
\end{proof}

Due to the next corollary the choice of a concrete involution $\tau\in\Aut(V)\setminus\Aut^0(V)$ in Theorem~\ref{thm:mthm} is irrelevant. Let $\Pi(\tau)$ be the scroll swept out by the $\tau$-invariant lines on $V$, cf. Proposition \ref{lem:scroll} below. 

\begin{cor}\label{cor:aut-GL2} Let $V$ be  a Fano--Mukai fourfold $V$ of genus $10$. Then
any two involutions  $\tau_1,\tau_2\in\Aut(V)\setminus\Aut^0(V)$ are conjugate in the group $\Aut(V)$ via an element of $\Aut^0(V)$ which sends the fixed point set $V^{\tau_1}$ and the scroll  in invariant lines $\Pi(\tau_1)$ to $V^{\tau_2}$ and $\Pi(\tau_2)$, respectively. Every involution $\tau\in\Aut(V)\setminus\Aut^0(V)$ interchanges any pair of disjoint $\Aut^0(V)$-invariant cubic cones on $V$. 
\end{cor}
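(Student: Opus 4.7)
The plan is to deduce the corollary from Proposition \ref{prop:aut-GL2} by way of the explicit description of $\Aut(V)$ supplied by Theorems \ref{thm:Aut} and \ref{thm:discrete}, and then to exploit the fact that both the fixed locus $V^\tau$ and the scroll $\Pi(\tau)$ transform equivariantly under conjugation.

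First I would fix a distinguished involution $\tau_0\in\Aut(V)\setminus\Aut^0(V)$ as in Theorem \ref{thm:discrete}(ii). By Theorem \ref{thm:discrete}(i), with the single exception $V=V_{\mathrm r}$, one has $\Aut(V)=\Aut^0(V)\rtimes\langle\tau_0\rangle$, and by Theorem \ref{thm:discrete}(iv) the conjugation action of $\tau_0$ on $\Aut^0(V)$ is the Cartan involution when $\Aut^0(V)\cong\GL(2,\CC)$ and the inversion when $\Aut^0(V)\cong\Ga\times\Gm$ or $\Gm^2$. Proposition \ref{prop:aut-GL2}(i)--(iv) then shows that any other involution $\tau\in\Aut(V)\setminus\Aut^0(V)$ is conjugate to $\tau_0$ by some $g\in\Aut^0(V)$; in the $\GL(2,\CC)$ case one uses the fact that the proof of part (i) realises the conjugating automorphism as an inner one $\Ad(b)$ with $b\in\GL(2,\CC)$, so that $g$ can indeed be chosen in $\Aut^0(V)$. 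For the exceptional fourfold $V_{\mathrm r}$ with $\Aut(V_{\mathrm r})\cong\Gm^2\rtimes\ZZ/6\ZZ$, a short calculation inside the semidirect product shows that $\zeta^3$ is the only element of order $2$ in the factor $\ZZ/6\ZZ$, so every involution $\tau\in\Aut(V_{\mathrm r})\setminus\Gm^2$ lies in the subgroup $\Gm^2\rtimes\langle\zeta^3\rangle\cong\Gm^2\rtimes\ZZ/2\ZZ$, and Proposition \ref{prop:aut-GL2}(iv) applies there.

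The geometric statements are then formal: if $g\tau_1 g^{-1}=\tau_2$ with $g\in\Aut^0(V)$ then $g(V^{\tau_1})=V^{\tau_2}$, and since $g$ sends $\tau_1$-invariant lines bijectively to $\tau_2$-invariant lines it also sends $\Pi(\tau_1)$ to $\Pi(\tau_2)$. Finally, to check that every $\tau\in\Aut(V)\setminus\Aut^0(V)$ interchanges each pair of disjoint $\Aut^0(V)$-invariant cubic cones $(S_1,S_2)$, I would write $\tau=g\tau_0 g^{-1}$ with $g\in\Aut^0(V)$; since $S_i$ is $\Aut^0(V)$-invariant we have $g(S_i)=S_i$, whence $\tau(S_i)=g\tau_0(S_i)=g(S_{3-i})=S_{3-i}$, Theorem \ref{thm:discrete}(ii) being responsible for the interchange $\tau_0(S_i)=S_{3-i}$. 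The only mildly delicate point in the whole argument is checking that the conjugating element produced by Proposition \ref{prop:aut-GL2} can be taken in $\Aut^0(V)$ rather than in a larger automorphism group, which is guaranteed by the fact that $\GL(2,\CC)$, $\Ga\times\Gm$ and $\Gm^2$ all admit square roots.
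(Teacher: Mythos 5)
Your proposal is correct and follows essentially the same route as the paper: reduce to the distinguished involution $c$ of Theorem \ref{thm:discrete}(ii),(iv) via Proposition \ref{prop:aut-GL2}(i),(iii),(iv), handle the exceptional fourfold $V_{\rm r}$ by locating all order-two elements inside $\Gm^2\rtimes\langle\zeta^3\rangle$, and transport $V^\tau$, $\Pi(\tau)$ and the cubic-cone pairs by equivariance. Your added remarks — that the conjugating automorphism in the $\GL(2,\CC)$ case is inner, hence realized by an element of $\Aut^0(V)$ — usefully make explicit a point the paper leaves implicit.
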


\begin{proof} According to Theorem~\ref{thm:discrete}(i) $\Aut(V)=\Aut^0(V)\rtimes\ZZ/2\ZZ$ except for $V=V(h)$ with $\PP h\in D_t=3Q$. In the latter case $\Aut(V)\cong\Gm^2\rtimes\ZZ/6\ZZ$ and the factor $\ZZ/6\ZZ$ contains a unique  element $c=\zeta^3$ of order 2.  Due to~Theorem \ref{thm:discrete} (iv) one can choose the generator $c$ of the factor $\ZZ/2\ZZ$ and the order 2 element $c\in\ZZ/6\ZZ$ in the exceptional case so that the conjugation with $c$ acts via the inversion on $\Aut^0(V)$ if $\Aut^0(V)$ is an abelian group and acts  on $\Aut^0(V)$ via the Cartan involution if $\Aut^0(V)\cong\GL(2,\CC)$. Furthermore, $c$ interchanges any pair of disjoint $\Aut^0(V)$-invariant cubic cones on $V$, see Theorem \ref{thm:discrete}(ii). However,
by Proposition~\ref{prop:aut-GL2}(i), (iii) and (iv) every involution $\tau\in\Aut(V)\setminus\Aut^0(V)$ is conjugate to $c$. Now the assertion follows.
\end{proof}

\section{Scroll $\Pi$ in $\tau$-invariant lines} 
\subsection{The first properties}\label{ss:first}
We fix as before a Fano--Mukai fourfold $V$ of genus 10 and an involution $\tau\in\Aut(V)\setminus\Aut^0(V)$. Many objects considered in this section, including the scroll $\Pi$ in $\tau$-invariant lines, depend on the choice of $\tau$. To simplify the notation we do not mention this dependence explicitly. 

Let  $\Sigma=\Sigma(V)$ stand as before for the Hilbert scheme of lines on $V\subset\PP^{12}$ regarded as a subscheme of the Grassmannian of lines $\GG(1,12)$. Then the fixed point subscheme $C= \Sigma^\tau$ parameterizes the $\tau$-invariant lines on $V$. 
Let $\LLL\subset\Sigma\times V$ be   the line-point incidence relation on $V$, and let $\rho\colon\LLL\to\Sigma$ be the natural projection, see~\eqref{equation-universal-family-V}; it comes from the projection of the universal $\PP^1$-bundle over $\GG(1,12)$ restricted to $\Sigma$.
Letting $\tilde\Pi=\rho^{-1}(C)\subset\LLL$, the projection $\rho|_{\tilde\Pi}\colon\tilde\Pi\to C$ defines a $\tau$-invariant ruling on $\tilde\Pi$ over $C$. The image $\Pi=\Pi(V,\tau):=s(\tilde\Pi)$ under the second projection $s\colon\LLL\to V$  is the union of all the $\tau$-invariant lines on $V$.

\begin{prop}\label{lem:scroll}  Let $\Sigma\subset\PP^2\times\PP^2$ be equipped with the polarization induced by the Segre embedding $\PP^2\times\PP^2\hookrightarrow\PP^8$. Then the following hold.
\begin{enumerate}\item[(i)] $C$ is a smooth rational quartic curve on $\Sigma$;
\item[(ii)] $\tilde\Pi$ is a smooth rational ruled surface;
\item[(iii)] $\Pi$ is a rational surface scroll of degree $12$ on $V$;
\item[(iv)] each $\tau$-invariant line on $V$ is a ruling of $\Pi$, and vice versa.
\end{enumerate}
\end{prop}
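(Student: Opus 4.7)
The plan is to treat the four claims in turn, exploiting the Segre description of $\Sigma$ from Proposition~\ref{prop:prelim-lines}(a).

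For (i), I would identify $C$ geometrically. Since $\tau$ acts on $\Sigma \subset \PP^2 \times \PP^2$ as the swap of the two $\PP^2$ factors, $C = \Sigma^\tau = \Sigma \cap \Delta$, where $\Delta \subset \PP^2 \times \PP^2$ is the diagonal. Under the Segre embedding into $\PP^8 = \PP(\CC^3 \otimes \CC^3)$, the diagonal lies as the Veronese surface inside the $\tau$-invariant subspace $\PP^5 = \PP(\Sym^2 \CC^3)$. A $\tau$-invariant hyperplane cutting out $\Sigma$ corresponds to a symmetric bilinear form $\ell$ on $\CC^3$, and $C$ pulls back via the Veronese to the plane conic $\{\ell = 0\} \subset \PP^2$. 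Smoothness of $\Sigma$ forces the $3 \times 3$ matrix of $\ell$ to have rank $3$, whence the conic is smooth and $C \cong \PP^1$. Since $(H_1 + H_2)|_\Delta = \cO_{\PP^2}(2)$, this smooth conic has degree $2 \cdot 2 = 4$ under the Segre polarization.

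For (ii) and (iv), Proposition~\ref{prop:prelim-lines}(b) immediately gives that $\tilde\Pi = \rho^{-1}(C) \to C \cong \PP^1$ is a $\PP^1$-bundle over $\PP^1$, hence a Hirzebruch surface, and (iv) is tautological, since $C$ parameterizes exactly the $\tau$-invariant lines on $V$ and each fiber of $\rho|_{\tilde\Pi}$ maps under $s$ to the corresponding ruling.

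For (iii), I would first verify birationality of $s|_{\tilde\Pi}\colon \tilde\Pi \to \Pi$. Granted that $\Pi$ is not contained in $V^\tau$ (which can be argued separately, for instance from the Lie-theoretic description of a central Weyl involution acting as $-\id$ on the Cartan), a general $p \in \Pi$ satisfies $\tau(p) \neq p$; any $\tau$-invariant line through $p$ must contain $\tau(p)$ as well, and since at most one line on $V$ passes through two distinct points, such a line is unique. For the degree, I would invoke the Schubert-calculus identity $\deg\Pi = \deg_{\mathrm{Pl}} C$: for a general codimension-$2$ subspace $L \subset \PP^{12}$, the points of $\Pi \cap L$ correspond bijectively to those $c \in C$ whose line meets $L$, a condition cut out by the Schubert hyperplane in $\GG(1,12)$. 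To compute $\deg_{\mathrm{Pl}} C$, I would identify the Plücker polarization on $\Sigma$ as $3(H_1 + H_2)|_\Sigma$ by testing on the line $\Lambda(S_i) \subset \Sigma$ from Proposition~\ref{prop:prelim-scrolls}(f), whose Plücker degree equals $\deg(S_i) = 3$ while its Segre $(1,1)$-degree equals $1$. Combining, $\deg\Pi = 3 \cdot 4 = 12$.

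The main obstacle I anticipate is the identification of the Plücker polarization on $\Sigma$ in terms of the Segre $(1,1)$-polarization; I plan to do this indirectly through the cubic scrolls of Proposition~\ref{prop:prelim-scrolls}, though an alternative is a direct intersection-theoretic computation on the universal $\PP^1$-bundle $\LLL$ using the pushforward formula $\rho_*(s^*H)^2 = c_1(\mathcal{S}^*)$ for the tautological rank-$2$ bundle on the Grassmannian. A subsidiary care point is birationality of $s|_{\tilde\Pi}$, which boils down to showing that $\Pi \not\subset V^\tau$.
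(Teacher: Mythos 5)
Your parts (i), (ii) and (iv) follow the paper's proof in substance: the paper also identifies $C=\Sigma\cap\Delta$ and exhibits it as a smooth conic in the diagonal (via the rank-one trace-zero matrix model $A=a\otimes b$, where the trace condition becomes $x_1^2+x_2^2+x_3^2=0$ on the locus $a\sim b$), hence a curve of bidegree $(2,2)$ and Segre degree $4$; (ii) and (iv) are read off from the universal $\PP^1$-bundle exactly as you do. For (iii) you take a genuinely different route. The paper never computes $\deg\Pi$ through the Pl\"ucker degree of $C$: it uses the decomposition $H^4(V,\ZZ)=\ZZ[S_1]\oplus\ZZ[S_2]$ and the $\tau$-invariance of $\Pi$ to write $[\Pi]=\alpha([S_1]+[S_2])$, so $\deg\Pi=6\alpha$ with $\alpha=\Pi\cdot S$, and then computes $\alpha=2$ by observing that $C$ meets the divisor $\Sigma(S)={\rm pr}_2^{-1}(h_S)$ of a general cubic scroll transversally in two points. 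The key advantage of the paper's argument is that it works with the cycle class of the reduced image and therefore needs \emph{no} information about the degree of $s|_{\tilde\Pi}\colon\tilde\Pi\to\Pi$; in the paper birationality is only established afterwards (Lemma~\ref{lem:no-line} and Corollary~\ref{cor:birational}). Your identity $\deg\Pi=\deg_{\rm Pl}C$, by contrast, is false without birationality, so your proof of (iii) is conditional on the step you defer. That step is true, but your proposed justification --- that $\tau$ is a central Weyl involution acting as $-\id$ on the Cartan --- is effectively a forward reference: this identification, and the fact that the $\tau$-fixed linear subspaces meet $V$ only in curves, are consequences of the main theorem rather than inputs available here. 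A self-contained argument at this stage is the one the paper uses: if $\Pi$ were pointwise fixed, then since $C$ has bidegree $(2,2)$ it meets $\Sigma(S_0)$ for an $\Aut^0(V)$-invariant cubic cone $S_0$, so $\Pi\cap S_0\neq\emptyset$, and any such intersection point would be a common point of $S_0$ and $\tau(S_0)$, contradicting their disjointness (Proposition~\ref{prop:prelim-cones}(k)). Your Pl\"ucker computation itself ($P|_\Sigma=3(H_1+H_2)$, tested against the lines $\Lambda(S_i)$, giving $3\cdot 4=12$) is correct, and the resulting argument is a legitimate alternative once birationality is secured.
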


\begin{proof} (i) 
 The threefold $\Sigma$ admits a realization as the projectivization of the variety of square matrices of order 3 and of rank 1 with zero trace \cite[11.2.2]{PZ18}. Such a matrix $A$ can be written as the tensor product $a\otimes b$ of two nonzero $3$-vectors $a=(x_1,x_2,x_3)$ and $b=(y_1,y_2,y_3)$ with zero trace 
 \[{\rm tr}(A)=x_1y_1+x_2y_2+x_3y_3=0.\] 
 The involution $\tau$ interchanges each pair of disjoint $\Aut^0(V)$-invariant cubic cones on $V$. Hence, $\tau$ also interchanges the connected components $\SSS_1$ and $\SSS_2$ of the Hilbert scheme $\SSS$ of cubic scrolls on $V$, that is, the factors of $\PP^2\times\PP^2=\SSS_1\times\SSS_2$. So, it interchanges every pair of disjoint cubic cones, see Proposition~\ref{prop:prelim-scrolls}(f). 
 Up to a choice of coordinates, the induced action of $\tau$ on $\Sigma$ results in the twist $a\leftrightarrow b$. 
 The fixed point set $C=\Sigma^\tau$ is the intersection $\Sigma\cap\Delta$ where $\Delta\subset\PP^2\times\PP^2$ is the diagonal. 
 The image of $A=a\otimes b$ in $\Sigma$ is fixed by $\tau$ if and only if $a$ and $b$ are proportional. The condition ${\rm tr}(A)=0$ leads in the latter case to the relations
\[x_1^2+x_2^2+x_3^2=0\quad\text{and}\quad y_1^2+y_2^2+y_3^2=0.\] 
It follows that $C=\Sigma^\tau$ is a smooth rational curve on $\PP^2\times\PP^2=\SSS_1\times\SSS_2$ of bidegree $(2,2)$ which projects isomorphically onto a smooth conic $C_i\subset\SSS_i$. Thus, $C$ is a conic in $\Delta\cong\PP^2$ and a quartic curve in $\PP^2\times\PP^2$.

(ii) The smooth morphism $\rho\colon\LLL\to\Sigma$ is the projection of a $\PP^1$-bundle. Hence (ii) follows. 

(iii) By virtue of (ii), $\Pi$ is an irreducible surface scroll on $V$ whose rulings are parameterized by $C$. 
The scroll $\Pi$ being $\tau$-invariant,  in $H^4(V,\ZZ)$ one has $[\Pi]= \alpha([S_1]+[S_2])$ for some $\alpha\in\ZZ$, see  Proposition~\ref{prop:prelim-scrolls}(e). Therefore, $\deg(\Pi)=6\alpha$, where $\alpha=\Pi\cdot S_i$, $i=1,2$. \footnote{Given two algebraic cycles $X,Y$ on a smooth variety $W$, we write $X\cdot Y$ for the scheme-theoretic intersection of $X$ and $Y$, while $X\cap Y$ stands for the reduced intersection of subvarieties. By abuse of notation, in the case where $X$ and $Y$ are cycles of complementary dimensions, $X\cdot Y$ can also stand for the intersection number;  in such cases, the meaning is clear from the context.}
For a general cubic scroll $S$ on $V$
the associated divisor $\Sigma(S)$ on $\Sigma$ (see Proposition \ref{prop:prelim-scrolls}(g)) is the pull-back of a general line in one of the factors $\PP^2$ under the corresponding projection. 
It follows that $C$ meets $\Sigma(S)$ transversally  in two distinct points. The latter means that among the rulings of $\Pi$ exactly two  meet
$S$. Each of them meets $S$ transversally in a single point, see Proposition \ref{prop:prelim-scrolls}(h). Consequently, $\alpha=\Pi\cdot S=2$, and hence $\deg(\Pi)=12$. 

(iv) By construction, each ruling of $\Pi$ is $\tau$-invariant and each $\tau$-invariant line on $V$ is a ruling of $\Pi$.
\end{proof}

\begin{lem} \label{lem:containment}
The fixed point set $V^\tau$ is contained in $\Pi$. 
No point of $V^\tau$ is a vertex of a cubic cone on $V$.
\end{lem}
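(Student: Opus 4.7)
The plan is to prove the two assertions in the order they are stated. For the vertex claim, the idea is to show that if $\tau$ fixed a vertex of a cubic cone $S$ on $V$, it would have to preserve $S$ itself, which is impossible since $\tau$ interchanges the two components of the Hilbert scheme of cubic scrolls.

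Concretely, suppose by contradiction that $v\in V^\tau$ is the vertex of a cubic cone $S\subset V$. By Proposition~\ref{prop:prelim-cones}(j), every line on $V$ through $v$ lies on $S$; conversely, every ruling of the cone $S$ passes through its vertex $v$, so the lines on $V$ through $v$ are exactly the rulings of $S$. As $\tau$ fixes $v$ and preserves $V$, it permutes the lines on $V$ through $v$, hence permutes the rulings of $S$, and therefore sends $S$ to itself as a subvariety of $V$. But $S$, being a cubic cone, is a cubic scroll, so it belongs to one of the two components $\SSS_1,\SSS_2$ of $\SSS$; by Proposition~\ref{prop:prelim-scrolls}(d), these components are interchanged by $\tau$, forcing $\tau(S)\neq S$, a contradiction.

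For the containment $V^\tau\subset\Pi$, pick $v\in V^\tau$. By the claim just proved, $v$ is not a vertex of a cubic cone, so by Proposition~\ref{prop:prelim-cones}(j) the set-theoretic fiber $s^{-1}(v)\subset\LLL$ is finite of cardinality $1$, $2$, or $3$. Restricted to the complement $\LLL'\to V'$ of the finitely many vertices of cubic cones, the morphism $s$ is proper with $0$-dimensional fibers, hence finite; since $\LLL$ is smooth (as a $\PP^1$-bundle over the smooth threefold $\Sigma$) and $V$ is smooth, miracle flatness forces $s|_{\LLL'}$ to be flat of degree $3$. Consequently $s^{-1}(v)$ carries the structure of a length-$3$ scheme on which $\tau$ acts. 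If its support has $1$ or $3$ points, an involution on a set of odd cardinality has a fixed point; if the support has exactly $2$ points, the length-$3$ constraint forces multiplicities $1$ and $2$, and $\tau$ must preserve multiplicities, so each of the two points is fixed individually. In every case a $\tau$-invariant closed point of $s^{-1}(v)$ exists, corresponding to a $\tau$-invariant line through $v$; by Proposition~\ref{lem:scroll}(iv) such a line is a ruling of $\Pi$, and so $v\in\Pi$.

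The one nontrivial ingredient is the scheme-theoretic length-$3$ count: the parity argument that rules out $\tau$ swapping two distinct lines through $v$ breaks down at the set-theoretic level, and it is precisely flatness (via miracle flatness) that supplies the multiplicities needed to close the argument. Everything else is a direct orbit analysis combined with the results recalled in Section~1.
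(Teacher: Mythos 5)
Your proof is correct, and its overall architecture coincides with the paper's: the vertex claim is handled by exactly the same argument (a fixed vertex would force $\tau(S)=S$, contradicting the fact that $\tau$ swaps the components $\SSS_1,\SSS_2$ of Proposition \ref{prop:prelim-scrolls}(d)), and the containment is obtained in both cases by producing a $\tau$-invariant line through $v$ from a ``three lines counted with multiplicity'' principle. The one genuine difference is where that multiplicity count comes from. The paper works on the side of the ambient fivefold $\Omega$: the lines of $\Omega$ through $v$ form a cubic cone $S_v$, and the hyperplane $h^\bot$ cuts its directrix cubic in a degree-$3$ zero-cycle, so that when only two lines survive one of them is a tangency line and hence intrinsically distinguished from the other. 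You instead stay on the Hilbert-scheme side and extract the same length-$3$ structure from miracle flatness applied to $s\colon\LLL\to V$ over the complement of the vertex locus; this is a clean, self-contained alternative that avoids invoking the geometry of $S_v\subset\Omega$, at the cost of the (routine) verification that $s$ is finite and flat there. Both routes then conclude identically: an involution cannot permute a length-$3$ scheme supported on at most three points without a fixed point, and a fixed point of $s^{-1}(v)$ is a $\tau$-invariant line, hence a ruling of $\Pi$ by Proposition \ref{lem:scroll}(iv).

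One small inaccuracy to fix: you speak of ``the finitely many vertices of cubic cones,'' but by Proposition \ref{prop:prelim-cones}(k) the Hilbert scheme of cubic cones can be one-dimensional (this happens when $\Aut^0(V)\cong\GL(2,\CC)$), so the vertex locus may be a curve rather than a finite set. This is harmless for your argument --- all you need is that its complement $V'$ is open, dense and connected, which still holds --- but the statement as written is not correct in that case.
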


\begin{proof} Assume on the contrary that $v\in V^\tau$ is a vertex of a cubic cone $S$ on $V$. Then
$\tau(S)$ is a cubic cone with the same vertex as $S$, hence $\tau(S)=S$ by virtue of Proposition~\ref{prop:prelim-cones}(i). However, this contradicts Proposition~\ref{prop:prelim-scrolls}(d) which says that $S$ and $\tau(S)$ belong to different components $\SSS_i$ of the Hilbert scheme $\SSS$ of cubic scrolls on $V$. 

Thus, any $v\in V^\tau$ is different from the vertices of the cubic cones on $V$. By Proposition~\ref{prop:prelim-cones}(j) there is at least one and at most three lines on $V$~passing through $v$. At least one of these lines 
is invariant under $\tau$. Indeed, this is clear in the case where the number of lines passing through $v$ is odd. Recall that $V$ is a hyperplane section of the adjoint fivefold variety $\Omega$, and the lines on $\Omega$ passing through a point $P$ form a cubic cone $S_P$ on $\Omega$. If there are exactly two lines, say $l$ and $l'$ on $V$ passing through $v$, then exactly one of these lines, say $l$ is multiple, meaning that $V=h^\bot\cap\Omega$ is tangent along $l$ to $S_P$. So $l$ and $l'$ are $\tau$-invariant, and hence $v\in l\subset \Pi$.
\end{proof}

\begin{lem} \label{lem:no-line} 
No line on $V$ is contained in $V^\tau$. 
\end{lem}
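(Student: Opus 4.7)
The plan is to argue by contradiction: supposing a line $l\subset V$ is entirely contained in $V^\tau$, I would derive a contradiction in two steps. First, I would promote the hypothesis to the much stronger statement $\Pi\subset V^\tau$ by a rigidity argument on the family of fiberwise involutions of $\rho\colon\tilde\Pi\to C$. Second, I would contradict this by intersecting $\Pi$ with a generic cubic scroll.

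For the first step, note that $l$, being $\tau$-invariant, is a ruling of $\Pi$ by Proposition~\ref{lem:scroll}(iv), corresponding to a point $c_0\in C=\Sigma^\tau$ over which the fiber of $\rho$ is identified with $l$. Since $\tau$ fixes $C$ pointwise and preserves the $\PP^1$-bundle structure $\rho$, it defines an algebraic section $c\mapsto\tau_c$ of the relative automorphism group scheme $\Aut(\tilde\Pi/C)\to C$ (a $\PGL_2$-bundle) that factors through the fiberwise subscheme $\{g:g^2=\mathrm{id}\}$. Inside $\PGL_2$ this subscheme is the disjoint union of the single point $\{\mathrm{id}\}$ and the two-dimensional conjugacy class of order-two elements; the latter is closed because an order-two element is semisimple. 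Globally, therefore, $\{g^2=\mathrm{id}\}$ over $C$ splits as a disjoint union of two closed subschemes, the identity section and a closed locus of non-trivial fiberwise involutions. Since $C$ is irreducible (Proposition~\ref{lem:scroll}(i)), the section lies entirely within one of them. The hypothesis $l\subset V^\tau$ forces $\tau_{c_0}=\mathrm{id}$, so the section is the identity throughout. Hence $\tau|_{\tilde\Pi}=\mathrm{id}$, and since $s\colon\tilde\Pi\to\Pi$ is surjective, also $\tau|_\Pi=\mathrm{id}$, i.e., $\Pi\subset V^\tau$.

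For the second step, I would pick a general $S\in\SSS_1$ with $S\cap\tau(S)=\emptyset$. Such an $S$ exists because, by Proposition~\ref{prop:prelim-scrolls}(f), the condition ``$S$ and $\tau(S)\in\SSS_2$ share a ruling'' amounts to $(S,\tau(S))\in\Sigma\subset\SSS_1\times\SSS_2=\PP^2\times\PP^2$, i.e., to the pull-back of the hyperplane section $\Sigma$ under the graph morphism $S\mapsto(S,\tau(S))$, which is a proper closed subset of $\SSS_1\cong\PP^2$. On the other hand, the computation in the proof of Proposition~\ref{lem:scroll}(iii) gives $[\Pi]\cdot[S]=2\neq 0$, so $\Pi\cap S\neq\emptyset$. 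But any $p\in\Pi\cap S$ satisfies $\tau(p)=p$ (since $\Pi\subset V^\tau$), hence $p=\tau(p)\in\tau(S)$, forcing $p\in S\cap\tau(S)=\emptyset$ --- a contradiction.

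The hard part will be the first step: a priori $\tau$ could act non-trivially on the generic ruling of $\Pi$ while degenerating to the identity on the special ruling $l$, and ruling this out requires the global fact that $\{g:g^2=\mathrm{id}\}\subset\PGL_2$ is disconnected, with the identity as an isolated component rather than a limit of non-trivial involutions. Once this rigidity is established, the final contradiction via $\Pi\cdot S = 2$ is immediate.
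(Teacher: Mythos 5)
Your proof is correct and follows the same two-step skeleton as the paper's: first show that the hypothesis forces $\tau|_{\Pi}=\mathrm{id}_\Pi$, then contradict this by exhibiting a cubic scroll $S$ with $S\cap\tau(S)=\emptyset$ that nevertheless meets $\Pi$ (via $\Pi\cdot S=2$). Where you genuinely diverge is in the rigidity step. The paper trivializes $\tilde\Pi\to C$ near the offending ruling, obtains a morphism $\phi\colon\A^1\to\PGL(2,\CC)$ with $\phi(0)=1$ and $\phi^2\equiv 1$, lifts it to $\SL(2,\CC)$ by the monodromy theorem, and concludes $\phi\equiv 1$ by a trace computation. You instead observe that the fiberwise locus $\{g:g^2=\mathrm{id}\}\subset\PGL_2$ is the disjoint union of the reduced isolated point $\{\mathrm{id}\}$ and the closed two-dimensional semisimple conjugacy class, so a section over the connected curve $C$ that hits the identity once is the identity section. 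This is the same underlying rigidity fact, but your global formulation is arguably cleaner and avoids the lifting argument; the paper's version is more hands-on and self-contained.

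Two small points. First, make sure to note explicitly that $s|_{\tilde l}\colon\tilde l\to l$ is a $\tau$-equivariant isomorphism, which is what turns ``$l$ pointwise fixed'' into ``$\tau_{c_0}=\mathrm{id}$'' on the fiber of $\rho$. Second, in your final step you assert that the pull-back of $\Sigma$ under the graph morphism $S\mapsto(S,\tau(S))$ is a \emph{proper} closed subset of $\SSS_1\cong\PP^2$; this is true but needs a word of justification (the graph is not contained in $\Sigma$, since by the computation in the proof of Proposition~\xref{lem:scroll}(i) the fixed locus $\Sigma\cap\Delta=C$ is a curve rather than a surface). The paper sidesteps this by invoking Proposition~\xref{prop:prelim-cones}(k) and Theorem~\xref{thm:discrete}(ii), which directly supply a disjoint pair $(S_0,\tau(S_0))$ of $\Aut^0(V)$-invariant cubic cones; either route works.
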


\begin{proof} Suppose a line $l$ on $V$ is pointwise fixed by $\tau$. Then 
$l$ is a ruling of $\Pi$, see Proposition~\ref{lem:scroll}(iv). 
If  $\tilde l$ is a ruling of $\tilde\Pi$ with $s(\tilde l)=l$ then $\tau$ 
acting on $\tilde\Pi$  fixes $\tilde l$ pointwise 
and preserves each ruling of $\tilde\Pi$, 
see again Proposition~\ref{lem:scroll}(iv).  
Let $U\cong\A^1\times \PP^1$ be a trivialized ruled open neighborhood of $\tilde l$ in $\tilde\Pi$ such that $\tilde l=\{0\}\times\PP^1$. 
The induced action of $\tau$ on $\A^1\times \PP^1$ is trivial on the factor $\A^1$ and defines a morphism $\phi\colon\A^1\to\PGL(2,\CC)$ such that $\phi(0)=1$ and $\phi^2(t)=1$ 
for all $t\in\A^1$. By the monodromy theorem, $\phi$ lifts to a morphism $\tilde\phi\colon\A^1\to \SL(2,\CC)$ such that $\tilde\phi(0)=1$ and $\tilde\phi^2(t)\in\{\pm 1\}$ for all $t\in\A^1$. Hence, $\tilde\phi^2(t)=1$  for all $t\in\A^1$. Since ${\rm trace}(\tilde\phi(t))\in\{0,\pm 2\}$ for any $t\in\A^1$ and 
${\rm trace}(\tilde\phi(0))=2$ one has ${\rm trace}(\tilde\phi(t))=2$  for all $t\in\A^1$. Therefore, $\tilde\phi(t)=1$ for all $t\in\A^1$. It follows that $\tau|_{\tilde\Pi}={\rm id}_{\tilde\Pi}$, and so $\tau|_{\Pi}={\rm id}_{\Pi}$.
 
By~Propositions~\ref{prop:prelim-cones}(k) 
and Theorem~\ref{thm:discrete}(ii)
there exists on $V$ a pair  $(S_0,\tau(S_0))$ of disjoint $\Aut^0(V)$-invariant cubic cones. Then a general cubic scroll $S\in\SSS_1$ and its $\tau$-dual scroll $\tau(S)\in\SSS_2$ are disjoint as well. Each of them intersects $\Pi$, see the proof of Proposition~\ref{lem:scroll}(iii). Since $\tau|_{\Pi}={\rm id}_{\Pi}$, any point $x\in S\cap\Pi$ is fixed by $\tau$, hence it is a common point of $S$ and $\tau(S)$. The latter contradicts the fact that $S$ and $\tau(S)$ are disjoint. 
\end{proof}

\begin{cor}\label{cor:birational}
$s|_{\tilde\Pi}\colon\tilde\Pi\to\Pi$ is a birational morphism.
\end{cor}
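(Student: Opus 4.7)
The plan is to show that through a general point of $\Pi$ there passes exactly one $\tau$-invariant line, which forces the generic degree of the morphism $s|_{\tilde\Pi}\colon\tilde\Pi\to\Pi$ to equal $1$.

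First I would note that $s|_{\tilde\Pi}$ is a surjective morphism between irreducible surfaces (Proposition~\ref{lem:scroll}(ii)--(iii)) and hence is generically finite of some degree $d\ge 1$. By the very construction $\tilde\Pi=\rho^{-1}(C)$ parameterizes pairs $(l,v)$ where $l$ is a $\tau$-invariant line on $V$ and $v\in l$; thus for $v\in\Pi$ general, $d$ equals the cardinality of the fiber $s|_{\tilde\Pi}^{-1}(v)$, i.e., the number of $\tau$-invariant lines on $V$ passing through $v$.

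Next I would argue that a general $v\in\Pi$ satisfies $\tau(v)\ne v$. Indeed, Lemma~\ref{lem:containment} gives $V^\tau\subseteq\Pi$, so it suffices to see that $V^\tau\ne\Pi$. If equality held, then every ruling of $\Pi$ would be contained in $V^\tau$, i.e., pointwise fixed by $\tau$, contradicting Lemma~\ref{lem:no-line}. Hence $V^\tau$ is a proper closed subset of $\Pi$ and a general $v\in\Pi$ lies in $\Pi\setminus V^\tau$.

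For such a $v$, any $\tau$-invariant line $l\subset V$ through $v$ must also contain $\tau(v)\ne v$. Since two distinct lines in $\PP^{12}$ meet in at most one point, there is at most one line through the pair $\{v,\tau(v)\}$, so at most one $\tau$-invariant line passes through $v$. On the other hand, since $v\in\Pi$, at least one $\tau$-invariant line through $v$ exists by Proposition~\ref{lem:scroll}(iv). Therefore $d=1$, establishing that $s|_{\tilde\Pi}$ is birational. The argument is straightforward once the two preceding lemmas are in place; the only delicate point is ruling out $V^\tau=\Pi$, and this is precisely what Lemma~\ref{lem:no-line} provides.
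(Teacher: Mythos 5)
Your proof is correct and follows essentially the same route as the paper: both arguments rest on the observation that two distinct $\tau$-invariant lines through a point would force that point to be fixed by $\tau$, and on Lemma~\ref{lem:no-line} to rule out $\tau$ acting trivially on $\Pi$. The only difference is organizational (you argue directly at a general non-fixed point, the paper argues by contradiction from $\deg\ge 2$), which is immaterial.
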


\begin{proof}
We have to show that $s|_{\tilde\Pi}\colon\tilde\Pi\to\Pi$ has degree 1. Notice that two distinct points of the curve $C\subset\Sigma$ correspond to distinct lines on $V$. Hence, two distinct rulings of $\tilde\Pi$ project under $s$ into two distinct rulings of $\Pi$. Assume on the contrary that $\deg (s|_{\tilde\Pi})=m\ge 2$, that is, through a general point $P$ of $\Pi$ pass $m\ge 2$ distinct rulings of $\Pi$. Then $P$ is fixed by $\tau$ and therefore, $\tau|_{\Pi}=\id_{\Pi}$. The latter contradicts Lemma~\ref{lem:no-line}. 
\end{proof}

\begin{lem}\label{lem:Euler} The Euler characteristic $e(V^\tau)$ 
equals $0$ modulo $4$. 
\end{lem}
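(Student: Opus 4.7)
The plan is to apply the topological Lefschetz fixed point formula to the holomorphic involution $\tau$ on $V$. Since $V^\tau$ is automatically a smooth compact complex submanifold, its topological Euler characteristic coincides with the Lefschetz number
\[
e(V^\tau)=L(\tau)=\sum_{i=0}^{8}(-1)^i\operatorname{tr}\bigl(\tau^*\mid H^i(V,\QQ)\bigr),
\]
so the task reduces to computing the traces of $\tau^*$ on the rational cohomology of $V$.

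First I would determine the Betti numbers of $V$. Using the Mukai realization $V\subset\Omega$ from Subsection~\ref{ss:Mukai} together with the weak Lefschetz theorem, one has $H^i(V,\QQ)\cong H^i(\Omega,\QQ)$ for $i\leq 3$. The adjoint variety $\Omega=G/P$ of $G_2$ is a rational homogeneous fivefold whose Schubert decomposition contains exactly one cell in each complex dimension $0,\ldots,5$, so its cohomology is concentrated in even degrees with every even Betti number equal to $1$. Combined with Poincar\'e duality on $V$ and with Proposition~\ref{prop:prelim-scrolls}(e), which yields $b_4(V)=2$, this gives $(b_0,\ldots,b_8)=(1,0,1,0,2,0,1,0,1)$; in particular, all odd Betti numbers of $V$ vanish.

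Next I would identify the action of $\tau^*$ on each nonzero piece. The traces on $H^0$ and $H^8$ equal $+1$ since $\tau$ preserves the connected components and the complex orientation. The group $H^2(V,\QQ)$ is spanned by the hyperplane class, which is $\Aut(V)$-invariant, and by Poincar\'e duality the same holds for $H^6$; both contribute trace $+1$. By Proposition~\ref{prop:prelim-scrolls}(d), the involution $\tau$ interchanges the two components $\SSS_1$ and $\SSS_2$ of the Hilbert scheme of cubic scrolls, hence exchanges the generators $[S_1]$ and $[S_2]$ of $H^4(V,\QQ)$, yielding trace zero on $H^4$.

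Summing the four nonvanishing contributions gives $L(\tau)=1+1+0+1+1=4$, so $e(V^\tau)=4\equiv 0\pmod{4}$, as claimed. The main obstacle is identifying the $\tau$-action on $H^4(V,\QQ)$ as a swap of basis vectors; it is precisely this swap that makes the $H^4$-contribution vanish and so produces a Lefschetz number divisible by $4$ rather than merely by $2$. The odd-degree vanishing and the $\tau$-invariance on $H^2$ are immediate from the weak Lefschetz theorem and the fact that $\tau$ preserves the polarization.
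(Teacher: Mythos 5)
Your proof is correct, and it in fact establishes the sharper statement $e(V^\tau)=L(\tau)=4$. The skeleton — the Lefschetz fixed point formula together with the observation that $\tau^*$ swaps the two generators $[S_1],[S_2]$ of $H^4(V,\QQ)$ and therefore has trace zero there — is exactly the paper's. Where you genuinely diverge is in the treatment of the odd-degree cohomology, which is the only place the paper does nontrivial work. The paper never computes $b_3(V)$: it argues softly that the total odd contribution is divisible by $4$, first using the Hodge decomposition of $H^{2k+1}(V,\CC)$ into conjugate $\tau$-invariant summands to see that each odd trace is even, and then using the $\tau^*$-equivariance of the hard Lefschetz isomorphism to identify the traces on $H^3$ and $H^5$, so that the odd part contributes $2\,\operatorname{trace}(\tau^*\mid H^3(V,\CC))\equiv 0\pmod 4$. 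You instead kill the odd cohomology outright: the Lefschetz hyperplane theorem for the Mukai realization $V=\Omega\cap\PP^{12}$ gives $H^3(V,\QQ)\cong H^3(\Omega,\QQ)=0$ because $\Omega=G/P$ has a Bruhat decomposition with a single cell in each complex dimension $0,\dots,5$, and Poincar\'e duality disposes of $H^5$ and $H^7$. Your route buys the exact value $4$ (which is consistent with the eventual identification $V^\tau=E^+\sqcup E^-$, a disjoint pair of rational curves), at the price of invoking the Mukai presentation and the cell structure of the adjoint variety at this early stage; the paper's route is insensitive to the actual value of $b_3(V)$ and needs only the Hodge and hard Lefschetz structures, which is why it yields only the congruence rather than the exact Lefschetz number.
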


\begin{proof} Given a 
periodic diffeomorphism $f$ 
of a compact differentiable manifold $M$ of dimension $n$,
the Euler characteristic of the fixed point set $M^f$ equals the Lefschetz number 
\[L(f, M)=\sum_{i=0}^n (-1)^i{\rm trace}(f^*|\,{H^i(V;\CC)}),\] 
see, e.g., \cite[Proposition~5.3.11]{tD79}.
Notice that $H^1(V,\ZZ)=0$, $H^2(V,\ZZ)=\Pic(V)=\ZZ$ and $\tau^*$ acts trivially on $\Pic(V)$. 
Furthermore, from the decomposition $H^4(V,\ZZ)=\ZZ [S]\oplus\ZZ [\tau(S)]$, 
see Proposition~\ref{prop:prelim-scrolls}(e), 
we deduce that
${\rm trace}(\tau^*|\,H^4(V,\CC))=0$. Thus, we have 
\begin{equation}\label{eq:even} 
\sum_{k=0}^4 {\rm trace}(f^*|\,{H^{2k}(V;\CC)})=4.
\end{equation}
The components of the Hodge decomposition
\[H^{2k+1}(V,\CC)=\bigoplus_{p+q=2k+1,\,p<q} (H^{p,q}(V,\CC)\oplus \overline{H^{p,q}(V,\CC)})\]
are $\tau$-invariant and
\[{\rm trace}(\tau^*|\,H^{p,q}(V;\CC))={\rm trace}(\tau^*|\,\overline{H^{p,q}(V,\CC)}).\]
Therefore, 
\begin{equation}\label{eq:odd-1} 
{\rm trace}(\tau^*|\,{H^{2k+1}(V;\CC)}) \equiv 0 \mod 2.
\end{equation}
Notice that $\tau$ acts trivially on $H^2(V,\CC)$, hence it preserves the K\"{a}hler class $[\omega]\in H^{1,1}(V,\CC)$.
Furthermore, the Lefschetz isomorphism (see, e.g., \cite[Partie I, Subsection 6.23]{Dem96})
\[L^{4-i}\colon H^{i}(V;\CC)\stackrel{\cong}{\longrightarrow} H^{8-i}(V;\CC),\quad H^{p,q}(V;\CC)\stackrel{\cong}{\longrightarrow} H^{4-q,4-p}(V;\CC),\quad p+q=i\le 4,\] is 
$\tau^*$-equivariant.
It follows that
\[{\rm trace}(\tau^*|\,H^{i}(V;\CC))={\rm trace}(\tau^*|\,H^{8-i}(V;\CC)).\]
In particular, 
\[{\rm trace}(\tau^*|\,H^{3}(V;\CC))={\rm trace}(\tau^*|\,H^{5}(V;\CC)).\]
Using \eqref{eq:odd-1} we deduce the congruence
\begin{equation}\label{eq:odd-2} 
\sum_{k}{\rm trace}(\tau^*|\,{H^{2k+1}(V;\CC)})=2{\rm trace}(\tau^*|\,{H^{3}(V;\CC)}) \equiv 0 \mod 4.
\end{equation}
Finally, \eqref{eq:even} and \eqref{eq:odd-2} yield $e(V^\tau)\equiv 0 \mod 4$. 
\end{proof}

The next proposition is the main result of this subsection.

\begin{prop}\label{prop:V-tau} $\,$
\begin{enumerate}
\item[(i)]
The fixed point set $V^\tau=\Pi^\tau$ is the union of two disjoint smooth rational curves $E_1, E_2$ on $\Pi$. Each of them meets any ruling of $\Pi$ in a unique point. Through any point of $\Pi\setminus (E_1\cup E_2)$ passes a unique ruling of $\Pi$.
\item[(ii)] The morphism $s|_{\tilde\Pi}\colon\tilde\Pi\to\Pi$ is bijective over $\Pi\setminus (E_1\cup E_2)$.  
\end{enumerate}
\end{prop}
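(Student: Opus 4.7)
My plan is to analyze the involution on the smooth ruled surface $\tilde\Pi$, transfer information to $\Pi$ via the birational morphism $s$, and then use the Euler characteristic congruence of Lemma \ref{lem:Euler} together with a tangent-space argument to pin down the structure of $V^\tau$.

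I would begin by describing $\tilde\Pi^\tau$. The $\PP^1$-bundle $\rho\colon\tilde\Pi\to C$ carries a fibrewise $\tau$-action; by Lemma \ref{lem:no-line} this action is nontrivial on every fibre, so each fibre contributes exactly two fixed points. Consequently $\tilde\Pi^\tau\to C\cong\PP^1$ is an \'etale double cover, which splits by simple connectedness of $\PP^1$: thus $\tilde\Pi^\tau=\tilde E_1\sqcup\tilde E_2$, two disjoint sections each isomorphic to $\PP^1$. Assertion (ii) and the uniqueness of rulings through points of $\Pi\setminus(E_1\cup E_2)$ follow next from the observation that for $P\in\Pi$ with $\tau(P)\ne P$, any $\tau$-invariant line through $P$ must also contain $\tau(P)$, and so is uniquely determined by the distinct points $P$ and $\tau(P)$; combined with the birationality of $s|_{\tilde\Pi}$ from Corollary \ref{cor:birational}, this forces $s$ to be bijective over $\Pi\setminus V^\tau$.

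Next I would show $V^\tau=s(\tilde\Pi^\tau)$: by Lemma \ref{lem:containment} $V^\tau\subset\Pi$, and any $v\in V^\tau$ lies on some ruling $l$ as a fixed point of the nontrivial involution $\tau|_l$, hence is the image under the isomorphism $s|_{\tilde l}$ of a point of $\tilde\Pi^\tau$. Being the fixed set of a finite group action on the smooth variety $V$, the locus $V^\tau$ is smooth; it cannot be $2$-dimensional (else $\tau|_\Pi=\id$, contradicting Lemma \ref{lem:no-line}). A contraction of some $\tilde E_i$ to a point $v$ by $s$ is ruled out: it would force every ruling of $\Pi$ to pass through $v$, turning $\Pi$ into a cone with apex $v$; then $v$ would lie on infinitely many lines of $V$, hence be a vertex of a cubic cone by Proposition \ref{prop:prelim-cones}(j), contradicting Lemma \ref{lem:containment}. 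Therefore each $E_i:=s(\tilde E_i)$ is an irreducible smooth rational curve, and $V^\tau$ has at most two such components. Lemma \ref{lem:Euler} gives $e(V^\tau)\equiv 0\pmod 4$, excluding the case of a single rational component (which would yield $e=2$); hence $V^\tau=E_1\sqcup E_2$ is a disjoint union of two smooth rational curves.

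Finally, for the statement that each $E_i$ meets every ruling of $\Pi$ in a unique point---equivalently that $s|_{\tilde E_i}$ is injective---I would argue via tangent spaces. Suppose two distinct $\tau$-invariant lines $l_1,l_2$ pass through a smooth point $P$ of $\Pi$ lying on $E_i$; then the derivative of the nontrivial involution $\tau|_{l_k}$ at its fixed point $P$ acts as $-1$ on $T_P l_k$, since any fixed point of a nontrivial involution of $\PP^1$ has differential $-1$. The independent directions $T_P l_1$ and $T_P l_2$ span the $2$-dimensional $T_P\Pi$, so $d\tau=-\id$ on $T_P\Pi$, forcing $P$ to be isolated in $V^\tau\subset\Pi$, which contradicts $P\in E_i$. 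Hence a generic point of $E_i$ lies on a single $\tau$-invariant line, and the degree of $s|_{\tilde E_i}$ is $1$. The main technical subtlety I anticipate is verifying that $E_i$ meets the smooth locus of $\Pi$ in an open dense subset, so that the tangent argument applies generically along $E_i$; this should follow by combining the birationality of $s|_{\tilde\Pi}$ (Corollary \ref{cor:birational}) with the generic line count from Proposition \ref{prop:prelim-cones}(j).
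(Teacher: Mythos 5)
Your overall architecture coincides with the paper's: the fibrewise $\tau$-action on $\tilde\Pi\to C$ is nontrivial on each fibre by Lemma \ref{lem:no-line}, so $\tilde\Pi^\tau\to C$ is an unramified double cover splitting into two sections $\tilde E_1,\tilde E_2$; the fixed locus $V^\tau=s(\tilde\Pi^\tau)$ is smooth, and Lemma \ref{lem:Euler} rules out a single rational component; and two distinct $\tau$-invariant rulings through a point $P$ with $\tau(P)\neq P$ would share the two points $P$ and $\tau(P)$, which yields both the last sentence of (i) and part (ii). Your extra check that $s$ cannot contract $\tilde E_i$ to a point (via Proposition \ref{prop:prelim-cones}(j) and Lemma \ref{lem:containment}) is a legitimate addition that the paper leaves implicit.

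The flawed step is your treatment of the claim that $E_i$ meets any ruling in a unique point. This is \emph{not} equivalent to injectivity of $s|_{\tilde E_i}$: a failure of injectivity of $s|_{\tilde E_i}$ means two distinct rulings pass through one point of $E_i$, whereas the claim at issue is that one ruling does not carry two points of $E_i$ --- these are transverse failures of bijectivity, and neither implies the other. Your tangent-space argument, even if completed, only addresses the former (and it is left with the unresolved density issue you flag, plus the weakness that ``degree $1$'' is only generic injectivity). Fortunately the claim you actually need is immediate from what you have already proved: for a ruling $r=s(\tilde r)$, the set $E_i\cap r$ is contained in $r^\tau$, which consists of exactly the two points $s(\tilde E_1\cap\tilde r)\in E_1$ and $s(\tilde E_2\cap\tilde r)\in E_2$; since $E_1\cap E_2=\emptyset$, exactly one of these lies on $E_i$, so $E_i\cap r$ is a single point. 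This is the (terse) argument in the paper. The question of how many rulings pass through a point of $E_1\cup E_2$ is not part of this proposition and is settled later, in Proposition \ref{prop:bijection}, by a different method (the winding family of sextic scrolls).
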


\begin{proof} (i)
Recall that $\rho|_{\tilde\Pi}\colon\tilde\Pi\to C$ is the projection of a smooth rational surface scroll. 
By Lemma~\ref{lem:no-line} any ruling $\tilde r$ of $\tilde\Pi$ is $\tau$-invariant and $\tau|_{\tilde r}$ is not identical. 
Hence $\tau|_{\tilde r}$ has exactly two fixed points. So, the projection 
$\rho|_{\tilde\Pi^\tau}\colon\tilde\Pi^\tau\to C\cong\PP^1$
is two-sheeted and unramified. It follows that $\tilde\Pi^\tau$ is a disjoint union 
of two sections, say $\tilde E_1$ and $\tilde E_2$ of the projection 
$\rho|_{\tilde\Pi}\colon\tilde\Pi\to C$, where $\tilde E_1$ and $\tilde E_2$ are smooth rational curves. 

We have $V^\tau=\Pi^\tau=s(\tilde\Pi^\tau)=E_1\cup E_2$ where $E_i=s(\tilde E_i)$, see Lemma~\ref{lem:containment} and Corollary \ref{cor:birational}.  The fixed point set of a reductive group action on a smooth variety  is smooth, so $V^\tau=E_1\cup E_2$ is smooth. Hence either 
$E_1\neq E_2$, $E_1\cap E_2=\emptyset$ and $E_i$ is a smooth rational curve for $i=1,2$, or $V^\tau=E_1=E_2$ is a smooth rational curve, say $E$. 
However, the latter contradicts 
Lemma~\ref{lem:Euler}, since $e(E)=2\not\equiv 0\mod 4$. \footnote{Alternatively, under the assumption that $E_1=E_2$
it can be shown that
$s|_{\tilde \Pi}\colon \tilde \Pi\to \Pi$ sends some pair $(p_1,p_2)$ of distinct points 
$p_i\in \tilde E_i\cap\tilde r$, $i=1,2$
on the same ruling $\tilde r$ of $\tilde \Pi$ to the same point of $E$. Indeed, 
$\iota\colon\tilde r\mapsto (p_1,p_2)$ embeds $C$ in $E_1\times E_2$, and $s|_{E_1\cup E_2}$ induces a morphism $\phi\colon E_1\times E_2\to E\times E$. The curve $\phi\circ\iota(C)$ on $E\times E$ intersects the diagonal $\delta$ of $E\times E$, which proves our claim. 
However,  
we know already that $r=s(\tilde r)$ is a ruling of $\Pi$ and $s|_{\tilde r}\colon \tilde r\to r$ 
is an isomorphism. This yields the desired contradiction.}

Since $s$ maps  any ruling of $\tilde\Pi$ isomorphically to a ruling of $\Pi$ 
and maps the section $\tilde E_i$ of $\tilde\Pi\to C$ onto $E_i$ for $i=1,2$, we see that $E_i$ intersects any ruling of $\Pi$ at a single point.

Suppose that  through a point $p\in \Pi\setminus (E_1\cup E_2)=\Pi\setminus\Pi^\tau$ pass two rulings of $\Pi$. These rulings are $\tau$-invariant, and therefore they also pass through the point $\tau(p)\neq p$, a contradiction. Thus, any multi-branched point of $\Pi$ is contained in $E_1\cup E_2$. This proves (i). 

(ii) 
Since $s|_{\tilde\Pi}\colon \tilde\Pi\to\Pi$ restricted to any ruling of $\tilde\Pi$ is bijective  and by (i) no two rulings of $\Pi$ intersect outside $E_1\cup E_2$, it follows that $s|_{\tilde\Pi}\colon\tilde\Pi\to\Pi$ is bijective over $\Pi\setminus (E_1\cup E_2)$.  
\end{proof}

\subsection{Winding families of scrolls} 
Any line $l$ on $V$ is a common ruling of a unique pair of cubic scrolls, see Proposition~\ref{prop:prelim-scrolls}(f). 
For a ruling $l$  of $\Pi$ the corresponding pair has the form $(S_l,\tau(S_l))$ where $S_l\in\SSS_1$ and 
$\tau(S_l)\in\SSS_2$. 
In a sense, the family of cubic scrolls $(S_l)_{l\in C}$ wraps (or envelops) $\Pi$, and $(\tau(S_l))_{l\in C}$ is a second such family; cf.\ Corollary \ref{lem:isomorphism}(i).  
Consider the  smooth conic $C_1={\rm pr}_1(C)$ in $\SSS_1$, see the proof of Proposition \ref{lem:scroll}(i). 
There is also a two-parameter family of $\tau$-invariant sextic scrolls $(D_S)_{S\in\SSS_1\setminus C_1}$ 
wrapping around $\Pi$,  see Proposition \ref{lem:disj-scrolls}(iii) and Corollary \ref{lem:isomorphism}(ii). 
In Section \ref{sec:smooth} we will use the latter family in order to show that $\Pi$ is smooth. 
 
By~Proposition~\ref{prop:prelim-cones}(k)  there exists on $V$ a pair  $(S,\tau(S))$ of disjoint $\Aut^0(V)$-invariant cubic cones. 
To every such pair there corresponds a unique $\Aut^0(V)$-invariant rational sextic scroll, see  \cite[Lemma~2.2]{PZ23}. More generally, we have the following facts. 

\begin{prop}\label{lem:disj-scrolls} 
Assume that a cubic scroll $S$ on $V$ contains no ruling of $\Pi$ \footnote{By  Lemma~\ref{lem:containment} any cubic cone $S$ on $V$ verifies  this assumption. Hence, so does as well the general cubic scroll on $V$; see also Corollary \ref{lem:isomorphism} for a more precise statement.}. Then the following hold.
\begin{enumerate}\item[(i)] $S$ and $\tau(S)$ are disjoint.
\item[(ii)]  Let $A_S$ be as in Proposition~\ref{prop:prelim-scrolls}(h). 
Then $\Gamma_S:=S\cap\tau(A_S)$ is a rational twisted cubic curve on $S$. 
\item[(iii)]  The twisted cubic curves $\Gamma_S$ and $\tau(\Gamma_S)$ 
are disjoint sections of a 
rational sextic scroll $D_S$ on $V$ such that:
\begin{itemize} 
\item any two distinct rulings of $D_S$ are disjoint;
\item any line on $V$ meeting both $\Gamma_S$ and $\tau(\Gamma_S)$ is a ruling of $D_S$ and vice versa; 
\item $D_S$ is the join of $\Gamma_S$ and $\tau(\Gamma_S)$;
\item
$D_S$ is the image under a bijective morphism of a rational normal scroll $D_S^{\rm norm}$ in $\PP^{7}$. The latter scroll is the image of
$\PP^1\times\PP^1$ embedded in $\PP^{7}$ by a complete linear system of bidegree $(1,3)$. 
\end{itemize}
\item[(iv)] The scroll $D_S$ is $\tau$-invariant and carries exactly 2 distinct $\tau$-invariant rulings $r_1,r_2$ and exactly 
$4$ fixed points of $\tau$. Namely, the intersection
\[V^\tau\cap D_S=\Pi^\tau\cap D_S=(E_1\cup E_2)\cap (r_1\cup r_2)\] 
consists of the $\tau$-fixed points $p_{i,j}=E_i\cdot r_j$, $i,j=1,2$.
\item[(v)] Every line on $A_{S}\cap\tau(A_{S})$ is a ruling of $D_S$. 
\end{enumerate}
\end{prop}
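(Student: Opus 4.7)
For (i), I argue by contradiction: since $\tau$ swaps the Hilbert scheme components $\SSS_1,\SSS_2$ by Corollary \ref{cor:aut-GL2}, if $S\cap\tau(S)\neq\emptyset$ then Proposition \ref{prop:prelim-scrolls}(f) yields a unique common ruling $l$, and applying $\tau$ produces another common ruling, so $\tau(l)=l$. But then $l$ is a $\tau$-invariant line on $V$, hence a ruling of $\Pi$ by Proposition \ref{lem:scroll}(iv), contradicting the hypothesis. For (ii), first $S\not\subset\tau(A_S)$: otherwise every ruling of $S$ would meet $\tau(S)$ by Proposition \ref{prop:prelim-scrolls}(h), against (i). Thus $\langle S\rangle=\PP^4$ meets the hyperplane $\langle\tau(A_S)\rangle$ in a $\PP^3$, and $\Gamma_S=S\cap\PP^3$ is a cubic curve on $S$. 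No line on $S$ can be a component of $\Gamma_S$: by (i) no line of $S$ meets $\tau(S)$, so by Proposition \ref{prop:prelim-scrolls}(h) none lies in $\tau(A_S)$. Hence $\Gamma_S$ is an irreducible twisted cubic.

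The heart of the proof is (iii). I identify the parameter space $\Lambda\subset\Sigma$ of lines on $V$ meeting both $S$ and $\tau(S)$ as $\Sigma(S)\cap\Sigma(\tau(S))$; by Propositions \ref{prop:prelim-lines}(a) and \ref{prop:prelim-scrolls}(g) this is a $(1,1)$-divisor on a $\PP^1\times\PP^1\subset\PP^2\times\PP^2$, hence a smooth conic $\Lambda\cong\PP^1$. For each $p\in\Gamma_S\subset\tau(A_S)\setminus\tau(S)$, Proposition \ref{prop:prelim-scrolls}(h) gives a unique line $l(p)\subset V$ through $p$ meeting $\tau(S)$; this yields bijections $\Gamma_S\leftrightarrow\Lambda\leftrightarrow\tau(\Gamma_S)$, while $\Gamma_S\cap\tau(\Gamma_S)\subset S\cap\tau(S)=\emptyset$. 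The same uniqueness implies distinct rulings of $D_S:=\bigcup_{l\in\Lambda}l$ are disjoint, so the normalization $D_S^{\mathrm{norm}}$ is a Hirzebruch surface $\mathbb{F}_k$ over $\Lambda$ with $\Gamma_S,\tau(\Gamma_S)$ as two disjoint irreducible sections of degree $3$ under the pullback hyperplane class $H=C_0+cf$. The numerics of disjointness plus equal degrees force both section classes to equal $C_0+(k/2)f$ and yield $H^2=6$; however, on $\mathbb{F}_k$ with $k>0$ every effective divisor in $|C_0+mf|$ with $m<k$ contains the negative section, so irreducibility of $\Gamma_S$ forces $k=0$. Hence $D_S^{\mathrm{norm}}\cong\PP^1\times\PP^1$ with $H\sim\cO(1,3)$, which embeds linearly nondegenerately in $\PP^7$ as the rational normal sextic scroll; the induced morphism $D_S^{\mathrm{norm}}\to D_S$ is bijective by the disjointness of rulings and the injectivity on each ruling.

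For (iv), $\Lambda$ and hence $D_S$ are $\tau$-invariant, and the induced involution on $\Lambda\cong\PP^1$ is nontrivial: otherwise every ruling of $D_S$ would be $\tau$-invariant, hence a ruling of $\Pi$ by Proposition \ref{lem:scroll}(iv), forcing $D_S\subset\Pi$, impossible since $\deg D_S=6<12=\deg\Pi$. Thus $\tau|_\Lambda$ has exactly two fixed points, giving the $\tau$-invariant rulings $r_1,r_2$. On each $r_j$, $\tau|_{r_j}$ is a nontrivial involution by Lemma \ref{lem:no-line}, so it fixes exactly two points, which by Proposition \ref{prop:V-tau}(i) are $p_{i,j}=E_i\cap r_j$ for $i=1,2$. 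Any further $\tau$-fixed point on $D_S$ would lie on a ruling coinciding with its $\tau$-image (disjointness of distinct rulings), hence on $r_1\cup r_2$; this pins down $V^\tau\cap D_S$. Finally, (v) is immediate: a line $l\subset A_S\cap\tau(A_S)$ on $V$ meets $S$ by Proposition \ref{prop:prelim-scrolls}(h) applied to $A_S$, and meets $\tau(S)$ by the same proposition applied to $\tau(A_S)$, so $l\in\Lambda$ is a ruling of $D_S$.

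The main obstacle is the identification $D_S^{\mathrm{norm}}\cong\mathbb{F}_0$: the numerical Picard-class computations a priori admit $\mathbb{F}_{2a}$ for any $a\ge 0$, and ruling this out requires combining the irreducibility of $\Gamma_S$ from (ii) with the observation that on $\mathbb{F}_k$ with $k>0$ there are no irreducible sections in classes $C_0+mf$ with $m<k$.
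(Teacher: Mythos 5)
Your proof is correct and, in parts (i), (ii), (iii) and (v), follows essentially the same route as the paper: disjointness via the uniqueness of the common ruling, $\Gamma_S$ as a hyperplane section of $S$ without linear components (the paper instead observes directly that $\tau(A_S)$ meets every ruling of $S$ exactly once, so that $\Gamma_S$ is a section of the ruling), the scroll swept out by the unique lines furnished by Proposition~\ref{prop:prelim-scrolls}(h), and the same numerics on $\FF_e$ --- two disjoint sections of equal hyperplane degree are forced into the class $C_0+(e/2)f$, which is incompatible with irreducibility unless $e=0$. The one genuine difference is in (iv): by parameterizing the rulings from the outset by the $\tau$-invariant curve $\Lambda=\Sigma(S)\cap\Sigma(\tau(S))$ in the Hilbert scheme of lines, you get for free that $\tau$ permutes the rulings of $D_S$, whereas the paper, having built $D_S$ as $\bigcup_{P\in\Gamma_S}l_P$, needs a separate argument (the classification of surfaces carrying more than a one--dimensional family of lines, via \cite{KPS18}) to show that the pencil of rulings is intrinsic and hence $\tau$-stable; your route also yields $\deg D_S=H^2=6$ directly from the intersection theory on $\FF_e$ rather than from the join formula. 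This is a small but real simplification; the only caution is that you should justify that $\Lambda$ is an irreducible conic (a $(1,1)$-divisor on $\ell_1\times\ell_2\cong\PP^1\times\PP^1$ could a priori split into two lines), e.g.\ by noting that $\Lambda$ is the image of the irreducible curve $\Gamma_S$ under the morphism $P\mapsto l_P$.
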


\begin{proof} (i) Suppose $S\cap\tau(S)\neq\emptyset$. Since $S$ and $\tau(S)$ 
belong to different components of the Hilbert scheme $\SSS$, they share a unique 
common ruling $r$, 
see ~Proposition~\ref{prop:prelim-scrolls}(f). Being unique, $r$ is $\tau$-invariant, 
hence $r$ is a common ruling of $\Pi$ and $S$ contrary to our assumption.
Thus, $S\cap\tau(S)=\emptyset$, as stated in (i). 

(ii) The hyperplane section $A_{\tau(S)}=\tau(A_S)$ intersects each ruling of $S$ at a single point. 
Indeed, assuming the opposite,  $A_{\tau(S)}$ contains a ruling, say $r$ of $S$. 
Then $r$ meets $\tau(S)$, see  Proposition~\ref{prop:prelim-scrolls}(h). Thus, $S\cap\tau(S)\neq\emptyset$.
However, the latter contradicts (i).  

Notice that if $S$ is a cubic cone, then $A_{\tau(S)}$ cannot pass through the vertex $v$ of $S$. Indeed, otherwise the unique line in $A_{\tau(S)}$ through $v$ that intersects $\tau(S)$ is a ruling of $S$, which contradicts (i). 
Finally, $\Gamma_S=S\cap A_{\tau(S)}$ is a section of the ruling on $S$, and hence this is a smooth, irreducible rational cubic curve. 

(iii) \footnote{Cf. also \cite[Lemma~2.11(ii)--(iii)]{PZ23} for a particular case where $S$ is a cubic cone. } By  Proposition~\ref{prop:prelim-scrolls}(h) 
for any $P\in\Gamma_S=S\cap A_{\tau(S)}$ there is a unique line $l_P$ on $A_{\tau(S)}$ through $P$ which meets $\tau(S)$ at a unique point, say $P'$. Clearly, $l_P=l_{P'}$ where $l_{P'}$ is a unique line on $A_S$ through $P'$ which meets $S$. It follows that 
$l_P\subset A_S\cap A_{\tau(S)}$. 
The family of lines $\{l_P\}_{P\in\Gamma_S}$ sweeps a scroll, say $D_S$, which contains two disjoint smooth sections $\Gamma_S$ and $\Gamma_{\tau(S)}$ and is contained in $A_S\cap A_{\tau(S)}$. By our construction any line in $V$ joining $\Gamma_S$ and $\Gamma_{\tau(S)}$ is a ruling of $D_S$, and no two distinct rulings meet. 
In particular, $D_S=J(\Gamma_S,\Gamma_{\tau(S)})$ is the join of $\Gamma_S$ and $\Gamma_{\tau(S)}$.
Hence, $D_S$ is a rational scroll of degree $\deg(D_S)=\deg(\Gamma_S)+\deg(\Gamma_{\tau(S)})=6$,
see \cite[Examples~8.17 and~19.5]{Har92}.

Let us show the last statement of (iii).  Notice that the normalization $D_S^{\rm norm}$ of $D_S$ is isomorphic to a Hirzebruch surface $\FF_e$ for some $e\ge 0$.  Since distinct rulings of $D_S$ are disjoint, the normalization morphism $\nu\colon D_S^{\rm norm}\to D_S$ is a bijection. It is $\tau$-equivariant with respect to the natural $\tau$-action on the normalization. 

Suppose on the contrary that $e>0$. Let $E_0$ be the exceptional section of $D_S^{\rm norm}$ and $F$ be a ruling of $D_S^{\rm norm}$. Let $\tilde\Gamma_S$ be the proper transform of $\Gamma_S$ on $D_S^{\rm norm}$; this is a section of the induced ruling on $D_S^{\rm norm}\cong\FF_e$. One has 
\[\tilde\Gamma_S\sim E_0+\alpha F\sim\tau(\tilde\Gamma_S)\quad\text{for some}\quad \alpha\ge e>0.\]  
For the intersection index of curves on $D_S^{\rm norm}$  we have
\[0=\tilde\Gamma_S\cdot \tau(\tilde\Gamma_S)=
\tilde\Gamma_S^2=2\alpha-e,\quad\text{hence}\quad 0<e=2\alpha\le\alpha,\]
a contradiction. Therefore, $e=0$, that is, $D_S^{\rm norm}\cong\PP^1\times\PP^1$. 

Assume that the fiber $F$ of ${\rm pr}_1\colon\PP^1\times\PP^1\to\PP^1$ is sent to a ruling of $D_S$ under $\nu$.  Since $\alpha=0$ for $e=0$ in the above calculation, we have $\tilde\Gamma_S\sim E_0$ is a constant section of
${\rm pr}_1$ under ${\rm pr}_2\colon\PP^1\times\PP^1\to\PP^1$. Let $H$ be a general hyperplane section of $D$ and $\tilde H$ be the proper transform of $H$ on $D_S^{\rm norm}$. 
Since $\tilde H\cdot F=1$ and $\tilde H\cdot \tilde\Gamma_S=H\cdot\Gamma_S=3$ we have $\tilde H
\sim E_0+3F$. 

The complete linear system $|E_0+3F|$ embeds $D_S^{\rm norm}$ onto a rational normal sextic scroll in $\PP^7$ denoted by the same symbol. The images of $\tilde\Gamma_S$ and $\tau(\tilde\Gamma_S)$ are two cubic curves whose linear envelops are two skew $\PP^3$'s in $\PP^7$. 
The scroll $D_S^{\rm norm}$ is the join of these cubic curves, see again \cite[Examples~8.17 and~19.5]{Har92}. The scroll $D_S$ is the image of $D_S^{\rm norm}=\PP^1\times\PP^1$ under a linear map given by a subsystem of
the  linear system $|E_0+3F|$.
This completes the proof of  (iii). 

(iv) The $\tau$-invariance of $D_S$ in (iv) follows from the facts that the pair $(S,\tau(S))$
is $\tau$-invariant and $D_S$ is uniquely defined by  $S$ due to (iii). We claim that the rulings of $D_S$ form a unique pencil of lines on $D_S$. Indeed, if a surface $X$ in $\PP^n$ contains a two-parameter family of lines, then $X=\PP^2$ is a linear subspace in $\PP^n$. If the Fano variety of lines on $X$ 
is one-dimensional and has at least two one-dimensional components, then $X\subset\PP^n$ is a smooth quadric surface, see 
\cite[Lemma A.1.1 and its proof]{KPS18}. Since $\deg(D_S)=6$, see (iii), this proves our claim. 

It follows that the pencil of rulings on $D_S$ is $\tau$-invariant. The induced action of $\tau$ on this pencil is not identical, since otherwise any ruling of $D_S$ is $\tau$-invariant, and therefore $D_S=\Pi$.
However, the latter is impossible because $\deg(D_S)=6$ by (iii) and $\deg(\Pi)=12$, see Proposition~\ref{lem:scroll}. 

Thus, the action of $\tau$ on $\PP^1$ parameterizing the pencil has exactly two fixed points, that is, there are exactly two  $\tau$-invariant rulings of $D_S$, say $r_1$ and $r_2$. By Lemma~\ref{lem:no-line} 
the $\tau$-action on $r_i$ is not identical. So, $r_i$ contains exactly two fixed points of $\tau$ for $i=1,2$. If  a ruling $r$ of $D_S$ different from $r_1$ and $r_2$ carries a fixed point $x$ of $\tau$ then $\tau(r)$ is as well a ruling of $D_S$ passing through $x$. By (iii) we have $r=\tau(r)$. Hence, $r$ is a third $\tau$-invariant ruling of $D_S$, a contradiction. Therefore, the points in $(r_1\cup r_2) \cap (E_1\cup E_2)$ are the only $\tau$-fixed points on $D_S$. 

(v) Any line $l$ on $A_S\cdot\tau(A_S)$ meets $S$ and  $\tau(S)$ in points of $\Gamma_S$ and $\tau(\Gamma_S)$, respectively, see Proposition \ref{prop:prelim-scrolls}(h). Due to (iii)  $l$ is a ruling of $D_S$.
\end{proof}

\begin{rems} 1. Assume that $S$ and $\tau(S)$ are disjoint. One can show that the codimension 2 linear section $A_{S}\cap\tau(A_{S})$ of $V$ is a union of three sextic surfaces $D_S$, $R_S$ and $\tau(R_S)$. In the case that $\Aut^0(V)\cong\GL(2,\CC)$, and only in this case,  one has $R_S=D_S$, so that $A_S\cdot\tau(A_S)=3D_S$, cf.~\cite[Theorem~13.5(c)]{PZ18}. 

2. In fact, the scroll $D_S$ is smooth and the normalization morphism $D_S^{\rm norm}\to D_S$ is isomorphism. This can be shown following the lines of the proof of Proposition \ref{prop:smoothness} and replacing the pair $(E_1,E_2)$ by the one $(\Gamma_S, \tau(\Gamma_S))$. However,  in what follows we do not use this fact. 
\end{rems}

Recall that the Hilbert scheme $C$ of rulings of $\Pi$ is a smooth rational quartic curve in $\Sigma$ contained in the diagonal of $\SSS_1\times\SSS_2\cong\PP^2\times(\PP^2)^\vee$. The projection $C_i={\rm pr}_i(C)\subset\SSS_i$ is a smooth conic on $\SSS_i\cong\PP^2$, $i=1,2$, see Proposition~\ref{lem:scroll}(i) and its proof. Since $C=\Sigma^\tau$ is $\tau$-invariant and $\tau(\SSS_1)=\SSS_2$, we have $\tau(C_1)=C_2$, see Subsection \ref{ss:first} and Corollary \ref{cor:aut-GL2}.

\begin{cor}\label{lem:isomorphism} $\,$
\begin{enumerate}
\item[(i)]
A pair of cubic scrolls $(S,\tau(S))\in\SSS_1\times\SSS_2$ shares a unique common ruling $r=r(S)$ if and only if $S\in C_1$. In the latter case $r$ is a ruling of $\Pi$. 
\item[(ii)] Let $S\in\SSS_1\setminus C_1$ and let $D_S$ be
the  
rational sextic scroll  as in Proposition~\ref{lem:disj-scrolls}(iii). Then $D_S\cap \Pi=r_1(S)\cup r_2(S)$ is a union of two distinct rulings. 
\item[(iii)] Let $C^{(2)}$ stand for the symmetric square of $C$ and $\delta$ stand for the diagonal of $C^{(2)}$. The correspondence 
\begin{equation}\label{eq:isom} \SSS_1\setminus C_1\ni S\mapsto \{r_1(S), r_2(S)\}\in C^{(2)}\setminus\delta\cong \PP^2\setminus\,\{\text{a smooth conic}\}\end{equation}
is an isomorphism.
\end{enumerate}
\end{cor}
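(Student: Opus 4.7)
The plan is to treat the three parts in sequence, exploiting Propositions~\ref{prop:prelim-scrolls},~\ref{lem:scroll}, and~\ref{lem:disj-scrolls}.

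For (i), I would appeal to Proposition~\ref{prop:prelim-scrolls}(f): a pair of scrolls in $\SSS_1\times\SSS_2$ shares a common ruling if and only if there is a line $l$ on $V$ with $\mathrm{pr}_1(l)=S$ and $\mathrm{pr}_2(l)=\tau(S)$, and that $l$ is precisely the common ruling. Since $\tau$ acts on $\Sigma\subset\SSS_1\times\SSS_2$ by swapping factors (Proposition~\ref{prop:prelim-lines}(a)), the conditions $\mathrm{pr}_1(l)=S$ and $\mathrm{pr}_2(l)=\tau(S)$ are equivalent to $\tau(l)=l$, i.e., $l\in C$, and hence to $S\in C_1$. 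Being $\tau$-invariant, such an $l$ is a ruling of $\Pi$ by Proposition~\ref{lem:scroll}(iv).

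For (ii), the inclusion $r_1\cup r_2\subseteq D_S\cap\Pi$ is immediate: Proposition~\ref{lem:disj-scrolls}(iv) furnishes $r_1,r_2$ as $\tau$-invariant lines on $V$, which are rulings of $\Pi$ by Proposition~\ref{lem:scroll}(iv). I would establish the reverse inclusion by contradiction. Suppose $p\in D_S\cap\Pi$ with $p\notin r_1\cup r_2$. Since $V^\tau\cap D_S\subseteq r_1\cup r_2$ by Proposition~\ref{lem:disj-scrolls}(iv), the point $p$ is not $\tau$-fixed, so $p\notin E_1\cup E_2$ and $\tau(p)\neq p$ also lies in $D_S\cap\Pi$. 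The (unique, by Proposition~\ref{prop:V-tau}(i)) ruling $l_p$ of $\Pi$ through $p$ is $\tau$-invariant, hence contains $\tau(p)$ as well; so $l_p$ meets $D_S\subseteq A_S\cap\tau(A_S)$ in at least two distinct points. The key step is now the following: since $A_S$ and $\tau(A_S)$ are hyperplane sections of $V$ and $l_p\cdot H=1$, a line not contained in a hyperplane section meets it in a single point, so the two-point condition forces $l_p\subseteq A_S\cap\tau(A_S)$. By Proposition~\ref{lem:disj-scrolls}(v) this makes $l_p$ a ruling of $D_S$; being $\tau$-invariant it must coincide with $r_1$ or $r_2$, contradicting $p\notin r_1\cup r_2$.

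For (iii), I plan to factor the correspondence through classical conic geometry. Writing $\Sigma(S)=\mathrm{pr}_2^{-1}(\Lambda_S)$ for the line $\Lambda_S\subset\SSS_2$ depending linearly on $S$ (Proposition~\ref{prop:prelim-scrolls}(g)), and using that $\mathrm{pr}_2|_C\colon C\to C_2$ is an isomorphism (proof of Proposition~\ref{lem:scroll}(i)), the unordered pair $\{r_1(S),r_2(S)\}=\Sigma(S)\cdot C$ corresponds to the $0$-cycle $\Lambda_S\cdot C_2\in C_2^{(2)}$. The map in question therefore factors as
\[
\SSS_1\ \xrightarrow{\;S\,\mapsto\,\Lambda_S\;}\ \SSS_2^\vee\ \xrightarrow{\;\Lambda\,\mapsto\,\Lambda\cdot C_2\;}\ C_2^{(2)}\cong C^{(2)}.
\]
The first arrow is a linear isomorphism because $\Sigma$ is a smooth $(1,1)$-divisor in $\PP^2\times\PP^2$ (Proposition~\ref{prop:prelim-lines}(a)), so the underlying bilinear form is nondegenerate; the second is the classical isomorphism $\SSS_2^\vee=\PP^2\cong C_2^{(2)}=\PP^2$ sending an unordered pair on the smooth conic $C_2$ to its secant (tangent when the pair is double). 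The composition is thus an isomorphism of $\PP^2$'s, and it remains to identify $C_1\subset\SSS_1$ with the preimage of $\delta\subset C^{(2)}$. Under the second arrow $\delta$ corresponds to the dual conic $C_2^\vee\subset\SSS_2^\vee$ of tangent lines, and I would pull this back using the explicit coordinates from the proof of Proposition~\ref{lem:scroll}(i), in which $\Sigma$ is cut out by $\sum x_iy_i=0$ and $C_2$ by $\sum y_i^2=0$: a direct discriminant calculation on the restriction of $\sum y_i^2$ to $\Lambda_S$ shows that tangency of $\Lambda_S$ to $C_2$ holds precisely when $x_1^2+x_2^2+x_3^2=0$, the equation of $C_1$. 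Restricting the isomorphism yields $\SSS_1\setminus C_1\cong C^{(2)}\setminus\delta\cong\PP^2\setminus\{\text{smooth conic}\}$, and in particular confirms $r_1(S)\neq r_2(S)$ for every $S\notin C_1$.
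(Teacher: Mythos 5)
Your proposal is correct and follows essentially the same route as the paper: part (i) via the uniqueness of the common ruling in Proposition~\ref{prop:prelim-scrolls}(f), part (ii) via Proposition~\ref{lem:disj-scrolls}(iv)--(v) (the paper dispatches this in one line, whereas you spell out the hyperplane-section argument showing a $\tau$-invariant ruling of $\Pi$ meeting $D_S$ must be $r_1$ or $r_2$), and part (iii) via the duality $S\mapsto h_S$ and intersection with the conic $C_2$. Your packaging of (iii) as a composition of two explicit isomorphisms of $\PP^2$'s, with $C_1$ identified as the pullback of the dual conic, is a slightly cleaner way to get ``isomorphism'' rather than merely ``bijective morphism,'' but the underlying construction is the same as the paper's.
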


\begin{proof} (i)
By virtue of Proposition~\ref{prop:prelim-scrolls}(f) $S$ and $\tau(S)$ intersect if and only if they share a unique common ruling. This  ruling, say $r$, is $\tau$-invariant, hence is a ruling of $\Pi$, see Proposition~\ref{lem:scroll}. Thus, $r\in C$, and so $S={\rm pr}_1(r)\in C_1$, see again Proposition~\ref{prop:prelim-scrolls}(f). The converse is immediate from Proposition~\ref{lem:disj-scrolls}(i).

Statement (ii) follows from (i) due to Proposition~\ref{lem:disj-scrolls}(iv) and its proof. 

To show (iii)
recall that the lines on $V$ meeting $S$ sweep out a hyperplane section $A_S$ of $V$ singular along $S$, see Proposition \ref{prop:prelim-scrolls}(h). The Hilbert scheme $\Sigma(S)$ of these lines is the preimage ${\rm pr}_2^{-1}(h_S)$ where $h_S\subset\SSS_2=(\SSS_1)^\vee$ is the dual line of the point $S\in\SSS_1$, see Proposition~\ref{prop:prelim-scrolls}(g). 
The smooth conic
\[\sigma_S:=\Sigma(S)\cap \Sigma(\tau(S))={\rm pr}_2^{-1}(h_S)\cap{\rm pr}_1^{-1}(h_{\tau(S)})\] 
parameterizes the lines on $V$ which meet both $S$ and $\tau(S)$, that is, the lines on $A_S\cap A_{\tau(S)}$. By Proposition~\ref{lem:disj-scrolls}(v) these are the rulings of $D_S$. 

The line $h_S\subset \SSS_2$ meets the smooth conic $C_2={\rm pr}_2(C)$ in an unordered pair of points. Hence $\Sigma(S)={\rm pr}_2^{-1}(h_S)$ meets $C$ in an unordered pair of points $\{r_1(S),r_2(S)\}$. 
The corresponding rulings $r_1(S),r_2(S)$ of $\Pi$ intersect $S$. Since these rulings are $\tau$-invariant and so is the scroll $D_S$,
these are the common rulings of $\Pi$ and $D_S$. By (i) $r_1(S)=r_2(S)$ if and only if $S\in C_1$. 
This yields the morphism in \eqref{eq:isom}. 

To show that the correspondence  in  \eqref{eq:isom} is one-to-one, choose $\{r_1,r_2\}\in C^{(2)}\setminus\delta=\PP^2\setminus \{\text{a smooth conic}\}$. Let $l$ be the line on $\SSS_2\cong\PP^2$ passing through the points ${\rm pr}_2(r_1)$ and ${\rm pr}_2(r_2)$ if these are distinct and tangent to $C_2$ at ${\rm pr}_2(r_1)$ otherwise. In the former case, the point $S=l^\vee$ in the dual projective plane $\SSS_1$  corresponds to a unique cubic scroll such that $D_S$ and $\Pi$ share the common rulings $r_1$ and $r_2$. Thus, the morphism in \eqref{eq:isom} is a bijection, so an isomorphism. 
\end{proof}

\begin{prop}\label{prop:bijection} $\,$
\begin{enumerate}
\item[(i)] No two distinct rulings of $\Pi$ meet. 
\item[(ii)] $s|_{\tilde\Pi}\colon\tilde\Pi\to\Pi$ 
is a bijective normalization morphism. 
\item[(iii)] $\deg(E_1)+\deg(E_2)=12$.
\item[(iv)] Any line on $V$ that intersects both $E_1$ and $E_2$ is a ruling of $\Pi$.
\end{enumerate}
\end{prop}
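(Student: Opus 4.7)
The plan is to treat the four parts in the order (i), (ii), (iv), (iii): parts (ii) and (iv) follow quickly from (i), while (iii) reduces to an intersection calculation on the Hirzebruch surface $\tilde\Pi$.

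For (i) I would argue by contradiction. Assume distinct rulings $r_1,r_2$ of $\Pi$ share a point $p$. Since each $r_i$ is $\tau$-invariant and two distinct lines meet in at most one point, $p$ is fixed by $\tau$, so $p\in V^\tau=E_1\cup E_2$. By Corollary \ref{lem:isomorphism}(iii), the unordered pair $\{r_1,r_2\}\in C^{(2)}\setminus\delta$ corresponds to a unique cubic scroll $S\in\SSS_1\setminus C_1$, and $r_1,r_2$ are then distinct rulings of the associated sextic scroll $D_S$. But any two distinct rulings of $D_S$ are disjoint by Proposition \ref{lem:disj-scrolls}(iii), contradicting $p\in r_1\cap r_2$.

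For (ii), birationality is Corollary \ref{cor:birational} and bijectivity over $\Pi\setminus(E_1\cup E_2)$ is Proposition \ref{prop:V-tau}(ii). Combining (i) with Proposition \ref{prop:V-tau}(i), any $p\in E_i$ lies on a unique ruling $r$ of $\Pi$, so $s^{-1}(p)$ is contained in the unique lift $\tilde r\subset\tilde\Pi$ of $r$ and in $\tilde\Pi^\tau=\tilde E_1\sqcup\tilde E_2$, forcing it to be a single point. A bijective birational morphism from a smooth surface is the normalization. For (iv), the line $\ell$ through $p_1\in E_1\cap\ell$ and $p_2\in E_2\cap\ell$ is uniquely determined in $\PP^{12}$ by these two distinct points; since $\tau$ fixes both, $\tau(\ell)=\ell$, so $\ell$ is a $\tau$-invariant line on $V$ and hence a ruling of $\Pi$ by Proposition \ref{lem:scroll}(iv).

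For (iii), by Proposition \ref{lem:scroll}(ii) $\tilde\Pi$ is a Hirzebruch surface $\FF_e$, with negative section $E_0$ (satisfying $E_0^2=-e$) and ruling class $F$. A short case analysis (for $e=0$ and $e>0$ separately, using that $\tau$ preserves the ruling by Proposition \ref{prop:prelim-lines}(b), so must fix $E_0$ when $e>0$) shows that any two disjoint sections of $\FF_e$ satisfy $\tilde E_1+\tilde E_2\sim 2E_0+eF$. Writing $h=s^*H\sim E_0+\beta F$, the equalities $h\cdot F=1$ and $h^2=\deg\Pi=12$ give $2\beta-e=12$, hence $\deg E_1+\deg E_2=h\cdot(\tilde E_1+\tilde E_2)=h\cdot(2E_0+eF)=2\beta-e=12$. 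The main obstacle is (i); the rest is essentially formal given the preparatory winding-family machinery of the preceding subsection.
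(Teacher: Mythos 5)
Your proof is correct and follows essentially the same route as the paper: parts (i), (ii) and (iv) are argued exactly as in the text (reduction of (i) to the disjointness of the rulings of $D_S$ via Corollary \xref{lem:isomorphism}(iii), and the $\tau$-fixedness argument for (iv)). The only divergence is in (iii), where the paper simply invokes the classical Edge formula for the degree of a scroll with two disjoint directrices, whereas you carry out the equivalent intersection computation on $\FF_e$ by hand --- note that no case analysis or appeal to the $\tau$-action is needed there, since disjointness of two sections already forces $\tilde E_1+\tilde E_2\sim 2E_0+eF$.
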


\begin{proof} (i) 
The unordered pairs of distinct rulings of $\Pi$ are in bijection with the points of $C^{(2)}\setminus\delta$, see \eqref{eq:isom}.  
Every such pair corresponds to a pair of rulings $\{r_1,r_2\}$ of a
sextic scroll $D_S$ for some $S\in\SSS_1\setminus C_1$, see Corollary~\ref{lem:isomorphism}(iii). However, 
no two distinct rulings of $D_S$ meet, see Proposition \ref{lem:disj-scrolls}(iii). 

(ii) Since $\tilde\Pi$ is smooth and $s|_{\tilde\Pi}\colon\tilde\Pi\to\Pi$ is bijective due to (i), the latter is a normalization morphism. 

 (iii) Recall that $\deg(\Pi)=12$ and that $\tilde E_i$ and $E_i$ are smooth rational curves, see Propositions \ref{lem:scroll}(iii) and \ref{prop:V-tau}(i). Hence by (ii), $s|_{\tilde E_i}\colon \tilde E_i\to E_i$ is an isomorphism. Now (iii) follows from the classical Edge formula \cite[Section~I, p.~17]{Edge31}, see \cite[Example~19.5]{Har92} for a modern treatment.
 
 (iv) Let $l$ be a line on $V$ which meets $E_i$ in a point $P_i$, $i=1,2$. Since $\tau(P_i)=P_i$, we have $P_i\in \tau(l)$ for $i=1,2$, and therefore $\tau(l)=l$. Now the result follows from Proposition \ref{lem:scroll}(iv).
\end{proof}

\subsection{Numerical data of $\Pi$}

\begin{lem}\label{lem:3-components} Given a cubic scroll  $S\in\SSS_1$, let $A_S$ be 
the hyperplane section of $V$ singular along $S$, see Proposition~\ref{prop:prelim-scrolls}(h). 
Then for the $1$-cycle $A_S\cdot \Pi$ on $\Pi$ we have  
\begin{equation}
\label{eq:Gamma} A_S\cdot \Pi = n_1r_1+n_2r_2+\gamma_S\end{equation}
where $n_i=n_i(S)\ge 1$, $\gamma_S$ is a section of $\Pi$  and  the  rulings $r_1$ and $r_2$ of $\Pi$ are disjoint if $S\in\SSS_1\setminus C_1$ and equal if $S\in C_1$.
\end{lem}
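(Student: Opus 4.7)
The approach is to pinpoint which rulings of $\Pi$ are contained in $A_S$ using the geometric description from Corollary~\ref{lem:isomorphism}, and then to show that the residual part of the intersection is a single section by a fiberwise degree count on the $\PP^1$-bundle $\tilde\Pi\to C$.

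First, I would identify the rulings of $\Pi$ contained in $A_S$. By Proposition~\ref{prop:prelim-scrolls}(h), a line on $V$ lies in $A_S$ if and only if it meets $S$; applied to rulings of $\Pi$, these are precisely the points of $\Sigma(S)\cap C$. From the proof of Corollary~\ref{lem:isomorphism}(iii), $\Sigma(S)\cap C$ is the unordered pair $\{r_1(S),r_2(S)\}$, collapsing to a single point exactly when $S\in C_1$ (Corollary~\ref{lem:isomorphism}(i)); when they are distinct, Proposition~\ref{prop:bijection}(i) guarantees that $r_1$ and $r_2$ are disjoint. Hence $A_S\cap\Pi$ contains $r_1\cup r_2$ in its support, and every other ruling of $\Pi$ meets $A_S$ only in isolated points.

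Next, I would write the decomposition
\[A_S\cdot \Pi = n_1 r_1+n_2 r_2+\gamma_S,\]
where $\gamma_S$ is the effective residual $1$-cycle on $\Pi$ containing no ruling of $\Pi$ as a component. Each $n_i\geq 1$ because $r_i$ is a common irreducible component of $A_S$ and $\Pi$. To show that $\gamma_S$ is a section, pick a general ruling $l$ of $\Pi$ distinct from $r_1,r_2$. Since $l$ is not contained in $A_S$ (otherwise $l$ would meet $S$, forcing $l\in\{r_1,r_2\}$), we have $l\cdot A_S=1$ in $\PP^{12}$. Combined with $l\cdot r_i=0$ from Proposition~\ref{prop:bijection}(i), this gives $l\cdot\gamma_S=1$. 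Pulling $\gamma_S$ back via the bijective normalization $s|_{\tilde\Pi}\colon\tilde\Pi\to\Pi$ of Proposition~\ref{prop:bijection}(ii), we obtain an effective divisor on the $\PP^1$-bundle $\tilde\Pi\to C$ meeting a general fiber in one reduced point. Any such divisor, having no fiber component, must be a single irreducible reduced section, since distinct horizontal components would each contribute at least one to the generic fiber intersection number.

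The main technical input is Corollary~\ref{lem:isomorphism}, which completely controls the set of rulings of $\Pi$ that meet a given cubic scroll $S$; once this has been invoked, the remainder of the argument is a formal Bézout-style computation on $V$ combined with the structure of effective divisors on the $\PP^1$-bundle $\tilde\Pi\to C$. I do not anticipate any serious obstacle; the only point requiring care is the uniform handling of the two cases $S\in C_1$ (where $r_1=r_2$) and $S\notin C_1$ (where $r_1$ and $r_2$ are distinct and disjoint) when bookkeeping the multiplicities.
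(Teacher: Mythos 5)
Your proof is correct, and its skeleton (isolate the ruling components $r_1,r_2$, then show the residual cycle is a section) matches the paper's. The one place where you genuinely deviate is in identifying which rulings of $\Pi$ lie in $A_S$: the paper argues that any such ruling is $\tau$-invariant, hence lies in $A_S\cap\tau(A_S)$, hence is a ruling of the sextic scroll $D_S$ by Proposition~\ref{lem:disj-scrolls}(v), and then invokes Corollary~\ref{lem:isomorphism}(ii) to conclude there are only two; you instead go straight through Proposition~\ref{prop:prelim-scrolls}(h) and read off $\Sigma(S)\cap C$ as the intersection of the line $h_S$ with the conic $C_2$, which is the computation underlying Corollary~\ref{lem:isomorphism}(iii). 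Your route is a bit more direct in that it bypasses $D_S$ altogether, at the cost of leaning on the internal details of the proof of Corollary~\ref{lem:isomorphism}(iii) rather than its statement. You also supply an actual argument (fiber degree $1$ on the $\PP^1$-bundle $\tilde\Pi\to C$ after removing the vertical components) for why $\gamma_S$ is a reduced irreducible section, a point the paper passes over with ``clearly''; this is a welcome addition and is exactly the computation the paper itself performs later in Lemma~\ref{lem:exclusion}.
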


\begin{proof} By Corollary \ref{lem:isomorphism} $S$ and $\tau(S)$ are disjoint if $S\in\SSS_1\setminus C_1$ and share a unique common ruling $r$ otherwise; in the latter case $r$ is a ruling of $\Pi$.  
In the former case the corresponding sextic scroll $D_S\subset A_S\cap\tau(A_S)$ shares  with $\Pi$ precisely two common rulings $r_1$ and $r_2$, see Proposition~\ref{lem:disj-scrolls}(iv). These rulings $r_1$ and $r_2$ participate in the $1$-cycle $\Upsilon_S=A_S\cdot \Pi$ with positive multiplicities $n_1$ and $n_2$, respectively. No third ruling $r$ of $\Pi$ is contained in $A_S$. Indeed, otherwise $r=\tau(r)\subset A_S\cap\tau(A_S)$ (see Proposition \ref{lem:scroll}) would be a third common ruling of $D_S$ and $\Pi$, which contradicts Corollary \ref{lem:isomorphism}(ii). 
Clearly, the residual effective 1-cycle $\gamma_S$ in~\eqref{eq:Gamma} is reduced  
and a section of $\Pi$. 
\end{proof}

\begin{lem}\label{lem:winding family} 
For the general cubic scroll $S\in\SSS_1$ one has $n_1=n_2$ in~\eqref{eq:Gamma}.
\end{lem}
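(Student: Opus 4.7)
The plan is to prove $n_1=n_2$ by a monodromy argument. The assignment of labels to the rulings $r_1(S),r_2(S)$ in the decomposition \eqref{eq:Gamma} is not canonical: by Corollary~\ref{lem:isomorphism}(ii) only the unordered pair $\{r_1(S),r_2(S)\}$ is intrinsic to the geometry. Hence any algebraically varying invariant of the pair must take the same value on both members once the corresponding double cover of the parameter space is shown to be irreducible.

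More precisely, I would introduce the incidence variety
\[
\tilde U\;=\;\bigl\{(S,r)\,:\, S\in \SSS_1\setminus C_1,\ r\in\{r_1(S),r_2(S)\}\bigr\},
\]
and the natural \'etale double cover $\pi\colon\tilde U\to \SSS_1\setminus C_1$. The key step is to verify that $\tilde U$ is irreducible. For this I would use the second projection $q\colon\tilde U\to C$, $(S,r)\mapsto r$. By Proposition~\ref{prop:prelim-scrolls}(g), for a fixed ruling $r\in C$ the condition that $r$ be a common ruling of $\Pi$ and $D_S$ is equivalent to the incidence $r\in\Sigma(S)={\rm pr}_2^{-1}(h_S)$, i.e.\ to ${\rm pr}_2(r)\in h_S$. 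In terms of the duality $\SSS_1\cong(\SSS_2)^\vee$, this cuts out the dual line $\ell_r:=({\rm pr}_2(r))^\vee\subset \SSS_1$, from which one removes the (at most two) points of $\ell_r\cap C_1$. Consequently every fiber of $q$ is a nonempty open subset of a projective line, hence irreducible, and since $C\cong\PP^1$ is irreducible with irreducible fibers, so is $\tilde U$.

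The final step is to consider the multiplicity function $n\colon\tilde U\to\ZZ_{>0}$ defined by $n(S,r)=\mathrm{mult}_r(A_S\cdot\Pi)$. Because the hyperplane sections $A_S$ form a linear (in particular flat) family over $\SSS_1$ and the marked component $r$ varies algebraically over $\tilde U$, the function $n$ is constructible and semicontinuous. On the irreducible variety $\tilde U$ it is therefore constant on a dense Zariski open subset $\tilde U_0\subset\tilde U$; since $\pi$ is finite, $\pi(\tilde U\setminus\tilde U_0)$ is a proper closed subset of $\SSS_1\setminus C_1$, and outside it both preimages $(S,r_1(S))$ and $(S,r_2(S))$ belong to $\tilde U_0$, giving $n_1(S)=n(S,r_1(S))=n(S,r_2(S))=n_2(S)$ for general $S$. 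The main obstacle I foresee is the bookkeeping for the irreducibility of $\tilde U$, in particular making sure that the dual line $\ell_r$ is not contained in the conic $C_1$ (which is clear as $C_1$ is a smooth conic) and that the multiplicity is genuinely well defined as an algebraic function on $\tilde U$; once this is in place, the semicontinuity argument is standard and yields the lemma immediately.
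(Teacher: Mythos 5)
Your proposal is correct and follows essentially the same route as the paper: both arguments reduce the claim to the irreducibility (equivalently, the nontrivial monodromy) of the \'etale double cover of $\SSS_1\setminus C_1$ given by marking one of the two rulings $r_1(S),r_2(S)$, together with generic constancy of the multiplicities. The paper establishes this by identifying $\SSS_1\setminus C_1$ with $C^{(2)}\setminus\delta$ via \eqref{eq:isom} and invoking the standard covering $(\PP^1\times\PP^1)\setminus\tilde\delta\to\PP^2\setminus\delta$, whereas you verify irreducibility of the incidence variety by fibering it over $C$ --- an equivalent check.
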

 
\begin{proof} 
By Corollary~\ref{lem:isomorphism} the rulings $r_1(S)$ and $r_2(S)$ are distinct for $S\in\SSS_1\setminus C_1$. The unordered pair of coefficients $\{n_1(S), n_2(S)\}$ in~\eqref{eq:Gamma} is constant for $S$ from a suitable open subset $U\subset\SSS_1\setminus C_1$. Identifying $\SSS_1\setminus C_1$ with $C^{(2)}\setminus\delta=\PP^2\setminus\delta$ via~\eqref{eq:isom} consider the 2-sheeted covering $(\PP^1\times\PP^1)\setminus \tilde\delta\to\PP^2\setminus\delta$, where $\tilde\delta$ stands for the diagonal in $\PP^1\times\PP^1$. The monodromy of the covering restricted to $U$ interchanges the members of the ordered pair of points from $(\PP^1\times\PP^1)\setminus \tilde\delta$. It follows that $n_1(S)=n_2(S)$ for $S\in U$. 
\end{proof}

Recall that according to Proposition~\ref{prop:bijection}, $\tilde\Pi$ is isomorphic to a Hirzebruch surface $\FF_e$ for some $e\ge 0$.

\begin{lem}\label{lem:exclusion} In the notation of Lemma \ref{lem:3-components}
the following conditions are equivalent:
\begin{itemize}
\item[\rm ($*$)]  $e=0$, that is, $\tilde\Pi\cong\PP^1\times\PP^1$;
\item[\rm ($**$)]  $\deg(E_1)=\deg(E_2)=6$. 
\end{itemize}
Suppose $S\in\SSS_1$ is a cubic scroll such that $\gamma_S\neq E_i$ for $i=1,2$ \footnote{Notice that $\tilde\gamma_S\neq \tilde E_i$ for a general $S\in\SSS_1$, $i=1,2$. Indeed, otherwise 
$E_i\subset A_S\cap\tau(A_S)$ for any $S\in\SSS_1$ contrary to Proposition \ref{prop:prelim-cones}(l). }. Then conditions $(*)$ and $(**)$ are fulfilled provided $n_1=n_2=3$ in~\eqref{eq:Gamma}. 
\end{lem}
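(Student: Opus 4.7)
My plan is to translate both assertions into intersection theory on the Hirzebruch surface $\tilde\Pi\cong\FF_e$, using the standard generators $E_0,F$ of $\Pic(\FF_e)$ with $E_0^2=-e$, $E_0\cdot F=1$, $F^2=0$. First I would compute the class of the pullback of a hyperplane $H\subset\PP^{12}$: since $s|_{\tilde\Pi}$ is a bijective normalization by Proposition~\ref{prop:bijection}(ii) and each ruling of $\tilde\Pi$ maps isomorphically onto a line in $\Pi$, one has $s^*H\cdot F=1$ and $(s^*H)^2=\deg\Pi=12$, whence $s^*H\sim E_0+\tfrac{12+e}{2}F$.

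To prove $(*)\Leftrightarrow(**)$, I would exploit the structure of disjoint sections on $\FF_e$. If $e>0$, then $E_0$ is the unique irreducible curve of negative self-intersection and any other section has class $E_0+aF$ with $a\ge e$; the disjointness $\tilde E_1\cdot\tilde E_2=0$ therefore forces $\{\tilde E_1,\tilde E_2\}=\{E_0,\,E_0+eF\}$. Intersecting with $s^*H$ gives $\deg E_1=(12-e)/2$ and $\deg E_2=(12+e)/2$, which are both equal to $6$ if and only if $e=0$. Conversely, for $e=0$ the $\tilde E_i$ are disjoint representatives of the second ruling class on $\tilde\Pi\cong\PP^1\times\PP^1$, and $s^*H\cdot\tilde E_i=6$ automatically.

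For the sufficiency statement, I would pull back the decomposition $A_S\cdot\Pi=3r_1+3r_2+\gamma_S$ along the bijective morphism $s|_{\tilde\Pi}$. Writing $\tilde r_i$ and $\tilde\gamma_S$ for the curves on $\tilde\Pi$ mapping onto $r_i$ and $\gamma_S$, the pullback of the hyperplane section $A_S$ yields the linear equivalence
\[
3\tilde r_1+3\tilde r_2+\tilde\gamma_S\;\sim\;s^*H\;\sim\;E_0+\tfrac{12+e}{2}F
\]
on $\FF_e$. Since $\tilde r_i\sim F$ and $\tilde\gamma_S$ is an irreducible reduced section by Lemma~\ref{lem:3-components}, this forces $\tilde\gamma_S\sim E_0+\tfrac{e}{2}F$ and hence $\tilde\gamma_S^2=0$.

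The hard part is then ruling out $e>0$. The key observation is that on $\FF_e$ with $e>0$ every irreducible section has self-intersection equal to $-e$ (if it is $E_0$) or at least $e>0$ (otherwise), so $\tilde\gamma_S^2=0$ is incompatible with $e>0$. The hypothesis $\gamma_S\neq E_i$ serves here to guarantee that $\tilde\gamma_S$ is not one of the two sections $\tilde E_1,\tilde E_2$ already identified, so that the above self-intersection computation genuinely applies to a new irreducible section of $\tilde\Pi$. Therefore $e=0$, which is condition $(*)$, and by the equivalence established in the second paragraph also $(**)$, finishing the proof.
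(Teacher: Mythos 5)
Your proof is correct and follows essentially the same route as the paper: both arguments reduce everything to intersection theory on $\tilde\Pi\cong\FF_e$ via the bijective normalization, derive $\deg E_{1,2}=6\mp e/2$ from the class of the pulled-back hyperplane, and use the residual section $\gamma_S$ in \eqref{eq:Gamma} to force $e=0$. The only cosmetic differences are that you identify $\{\tilde E_1,\tilde E_2\}=\{E_0,E_0+eF\}$ by their disjointness rather than by the $\tau$-invariance of the exceptional section, and you reach the final contradiction from $\tilde\gamma_S^2=0$ where the paper invokes the impossibility of three pairwise disjoint sections on $\FF_e$ with $e>0$.
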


\begin{proof} 
Assume that $e>0$, and let $\tilde E_0$ be the exceptional section of $\tilde \Pi\cong\FF_e$. 
Since $\tau$ preserves each ruling of $\tilde \Pi$ and $\tau(\tilde E_0)=\tilde E_0$, one has $\tau|_{\tilde E_0}={\rm id}_{\tilde E_0}$.
So, $\tilde E_0$ is one of the components of the fixed point set $\tilde \Pi^\tau=\tilde E_1\cup\tilde E_2$; we may assume that $\tilde E_0=\tilde E_1$. 

In the sequel we use the notation
\[\tilde A_S=s^*(A_S),\quad \tilde\Upsilon_S=\tilde A_S\cdot\tilde\Pi,\quad\tilde \gamma_S=s^*(\gamma_S)\cdot\tilde\Pi\quad\text{and}\quad \tilde r_i=s^*(r_i)\cdot\tilde\Pi.\]
Thus,  $\tilde\Upsilon_S,\,\tilde \gamma_S$ and $\tilde r_i$ are the proper transforms of $\Upsilon_S,\,\gamma_S$ and $r_i$ under the bijective morphism $s|_{\tilde\Pi}\colon\tilde\Pi\to\Pi$.

For a general $e\ge 0$ we have $\tilde E_2\sim\tilde E_1+eF$ where $F$ is a ruling of $\tilde \Pi$. 
Since $(\tilde A_S|_{\tilde \Pi})^2=12$ and the linear system $|\tilde A_S|$ restricted to $\tilde \Pi$ is base point free, see Proposition \ref{prop:bijection}(ii), one has $\tilde A_S|_{\tilde \Pi}\sim\tilde E_1+aF$ where $a=6+e/2\ge e$. 
It follows that 
\begin{equation}\label{eq:deg} \deg(E_1)=\tilde A_S\cdot \tilde E_1=6-e/2\quad\text{and}\quad \deg(E_2)=\tilde A_S\cdot \tilde E_2=6+e/2,\end{equation}
which is in line with Proposition \ref{prop:bijection}(iii). 
Therefore,  $e=0$ if and only if $\deg(E_1)=\deg(E_2)=6$, that is, $(*)$ and  $(**)$ are equivalent.

Using Proposition \ref{lem:scroll}(iii) and Lemmas~\ref{lem:3-components} --\ref{lem:winding family}  and letting $n=n_1=n_2$ in~\eqref{eq:Gamma}, for the hyperplane section $\Upsilon_S= A_S \cdot \Pi$ we obtain
\[12=\deg(\Pi)=\deg(\Upsilon_S)=2n+\deg(\gamma_S).\] 
Using the Projection Formula,~Proposition \ref{prop:bijection}(iii) and~\eqref{eq:Gamma}  we deduce 
\[\tilde\Upsilon_S=n(\tilde r_1+\tilde r_2)+\tilde \gamma_S \quad\text{and}\quad
12=\tilde A_S\cdot  \tilde\Upsilon_S=2n+\deg(\gamma_S).\]
By  \eqref{eq:deg} we have $\tilde A_S\cdot (\tilde E_1+\tilde E_2)=12$. Hence, on $\tilde \Pi$ one has
\[12=\tilde \Upsilon_S\cdot (\tilde E_1+\tilde E_2)=
4n+\tilde \gamma_S\cdot (\tilde E_1+\tilde E_2).\]
It follows that $n\le 3$ provided $\gamma_S\neq E_1$, and $n=3$ if and only if $\deg \tilde \gamma_S=6$. 

Let now $S\in \SSS_1$ be such that $n_1=n_2=3$ and $\gamma_S$ in \eqref{eq:Gamma} is different from $E_1$ and $E_2$. In this case the intersection index $\tilde\gamma_S\cdot\tilde E_i$ on $\tilde\Pi$ is zero for $i=1,2$, and therefore $\tilde E_1,\,\tilde E_2$ and $\tilde \gamma_S$ are three disjoint sections of $\tilde\Pi\to C$. Hence, $e=0$, and if $\tilde \Pi\cong\PP^1\times\PP^1\stackrel{{\rm pr}_1}{\longrightarrow}\PP^1$ defines the ruled surface structure on $\tilde\Pi$, then these three sections are constant with respect to the projection $\tilde \Pi\cong\PP^1\times\PP^1\stackrel{{\rm pr}_2}{\longrightarrow}\PP^1$. Thus, both $(*)$ and $(**)$  are fulfilled.
\end{proof}

\section{Starting the proof of the main theorem}\label{sec:proof} 
The following theorem is part of Theorem \ref{thm:mthm}. 

\begin{thm}\label{thm:towards-mthm}
Let $V$ be a Fano--Mukai fourfold  of genus $10$ half-anticanonically embedded in $\PP^{12}$, and let $\tau\in \Aut(V)\setminus\Aut^0(V)$ be an involution. Then the fixed point set $V^\tau$ is a union of two disjoint smooth rational sextic curves $E_1$ and $E_2$. Furthermore, 
there is a surface scroll $\Pi=\Pi(V,\tau)$ in $V$ verifying the following.
\begin{enumerate}\item[(i)] Each ruling of $\Pi$ is $\tau$-invariant and each $\tau$-invariant line on $V$ is a ruling of $\Pi$.
\item[(ii)] $\Pi$ has degree $12$ and its normalization $\tilde\Pi$ is a rational normal scroll isomorphic to $\PP^1\times\PP^1$ and embedded in $\PP^{13}$ by the complete linear system of type $(1,6)$. The normalization morphism $\tilde\Pi\to\Pi$ is bijective.
\item[(iii)] An isomorphism $\tilde\Pi\cong\PP^1\times\PP^1$ sends the proper transforms $\tilde E_i$ of $E_i$ on $\tilde\Pi$, $i=1,2$ into constant sections of the projection ${\rm pr}_1\colon\PP^1\times\PP^1\to\PP^1$.
\end{enumerate}
\end{thm}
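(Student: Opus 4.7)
The plan is to complete Theorem~\ref{thm:towards-mthm} by building on Propositions~\ref{lem:scroll}, \ref{prop:V-tau}, and \ref{prop:bijection} from the previous section. Those results already supply property~(i); the fact that $\Pi$ has degree~$12$ with bijective normalization $s|_{\tilde\Pi}\colon\tilde\Pi\to\Pi$ from a Hirzebruch surface $\tilde\Pi\cong\FF_e$ for some $e\geq 0$; and the fact that $V^\tau=E_1\cup E_2$ is a disjoint union of two smooth rational curves with $\deg E_1+\deg E_2=12$. What still needs to be proved is that $e=0$ (equivalently, $\deg E_1=\deg E_2=6$ by Lemma~\ref{lem:exclusion}), that the morphism $\tilde\Pi\to\PP^{12}$ factors through the complete linear system $|(1,6)|$, and that $\tilde E_1,\tilde E_2$ are constant sections of ${\rm pr}_1$.

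The heart of the matter is showing $e=0$. By Lemma~\ref{lem:exclusion}, this reduces to exhibiting one cubic scroll $S\in\SSS_1\setminus C_1$ with $\gamma_S\neq E_1,E_2$ and with both multiplicities $n_1(S)=n_2(S)=3$ in the expansion of $A_S\cdot\Pi$ from Lemma~\ref{lem:3-components}. The condition $\gamma_S\neq E_i$ holds for generic $S$ by the footnote of Lemma~\ref{lem:exclusion} (via Proposition~\ref{prop:prelim-cones}(l)), and $n_1=n_2=:n$ holds for generic $S$ by the monodromy argument of Lemma~\ref{lem:winding family}. Moreover, the proof of Lemma~\ref{lem:exclusion} itself shows $n\leq 3$ under the genericity hypothesis $\gamma_S\neq E_i$. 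Hence the remaining task is to prove $n\geq 3$, i.e.\ to show that for generic $S$ the hyperplane section $A_S$ meets $\Pi$ along each of the rulings $r_1,r_2$ with multiplicity at least~$3$.

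To obtain $n\geq 3$, the idea is to exploit the inclusion $D_S\subset A_S$ from Proposition~\ref{lem:disj-scrolls} together with the singular behavior of $A_S$ along $S$. A direct intersection-number computation in $H^4(V,\ZZ)=\ZZ[S_1]\oplus\ZZ[S_2]$ yields $D_S\cdot\Pi=2r_1+2r_2$ as a divisor on $\Pi$: indeed, by $\tau$-invariance and $\deg D_S=6$ one has $[D_S]=[S_1]+[S_2]$, while $[\Pi]=2[S_1]+2[S_2]$ by the proof of Proposition~\ref{lem:scroll}(iii), so $[D_S]\cdot[\Pi]=4$, which combined with the set-theoretic identity $D_S\cap\Pi=r_1\cup r_2$ from Corollary~\ref{lem:isomorphism}(ii) forces multiplicity~$2$ at each $r_i$. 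Since $A_S$ has a double-point singularity along $S$ meeting each $r_i$ transversally at a single point $p_i$, a local tangency analysis at $p_i$ is expected to push the multiplicity of $A_S|_\Pi$ along $r_i$ from the~$2$ forced by $D_S\subset A_S$ up to~$3$. This local multiplicity computation is the main obstacle in the plan; it requires a careful combination of the embedded contribution of $D_S$ inside $A_S$ with the double-point behavior of $A_S$ along $S$ at the crossing point $p_i$.

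Once $e=0$ is established, the remaining statements follow formally. Pulling back the hyperplane class on $\PP^{12}$ to $\tilde\Pi\cong\PP^1\times\PP^1$ produces a class of bidegree $(1,b)$, because each fiber $\tilde F$ of $\rho|_{\tilde\Pi}$ maps isomorphically to a ruling of $\Pi$, which is a line in $\PP^{12}$, so $\tilde H\cdot\tilde F=1$. Then $\tilde H^2=2b=\deg\Pi=12$ forces $b=6$, and since $h^0(\cO(1,6))=14$ the complete linear system $|(1,6)|$ embeds $\tilde\Pi$ as a rational normal scroll in $\PP^{13}$, the morphism $\tilde\Pi\to\Pi\subset\PP^{12}$ factoring through a linear projection. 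Finally, $\tilde E_1$ and $\tilde E_2$ are disjoint sections of $\rho|_{\tilde\Pi}={\rm pr}_1$; since any section of ${\rm pr}_1$ on $\PP^1\times\PP^1$ has bidegree $(1,k)$ for some $k\geq 0$ and two such sections meet in $k_1+k_2$ points, disjointness forces both to be constant sections, proving~(iii).
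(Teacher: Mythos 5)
Your reduction is the right one — everything comes down to producing one cubic scroll $S$ with $\gamma_S\neq E_1,E_2$ and $n_1=n_2=3$ in Lemma~\ref{lem:3-components}, after which Lemma~\ref{lem:exclusion} and the bookkeeping with the $(1,6)$-system finish the proof — but the central step $n\ge 3$ is not actually established, and you say so yourself (``the main obstacle in the plan''). Worse, the two ingredients you offer toward it do not work as stated. First, the equality $[D_S]\cdot[\Pi]=4$ is a degree of a zero-dimensional intersection class; since $D_S$ and $\Pi$ are surfaces in a fourfold meeting along the curves $r_1\cup r_2$, this is an excess intersection, and the number $4$ cannot be redistributed as coefficients of a $1$-cycle ``$2r_1+2r_2$'' (excess contributions of curve components involve normal-bundle data and need not be of the form ``multiplicity times one''). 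Even granting such a refined cycle, the inclusion $D_S\subset A_S$ only tells you that the linear form cutting out $A_S$ vanishes on $r_i$, i.e.\ $n_i\ge 1$, not $n_i\ge 2$. Second, the multiplicity $n_i$ is the order of vanishing of $A_S|_\Pi$ at the \emph{generic} point of $r_i$, so a local analysis at the single point $p_i=r_i\cap S$ where $r_i$ meets $\operatorname{Sing}A_S$ cannot raise that generic multiplicity; the mechanism that actually produces $n_i=3$ is a tangency of $A_S$ and $A_{\tau(S)}$ along the whole surface $D_S$, not a phenomenon at $p_i$.

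For comparison, the paper does not prove $n=3$ uniformly. It first treats the case $\Aut^0(V)\cong\GL(2,\CC)$, where for the invariant cubic cone $S$ one has the cycle identity $A_S\cdot A_{\tau(S)}=3D_S$ from \cite[Theorem~13.5(c)]{PZ18}; intersecting with $\Pi$ gives $n_1=n_2=3$ and also rules out $\gamma_S=E_i$. The remaining cases $\Aut^0(V)\cong\Gm^2$ and $\Ga\times\Gm$ are then handled by placing $V$ in an explicit one-parameter family of hyperplane sections of $\Omega$ (inside $\PP\fh$, resp.\ $\PP\ff$) that specializes to the $\GL(2,\CC)$ member, and using that the degrees of the two components of the fixed-point surface of $\tau$ on the total space are locally constant. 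If you want to salvage your uniform approach you would need either the identity $A_S\cdot A_{\tau(S)}=3D_S$ (which in fact fails outside the $\GL(2,\CC)$ case, where $A_S\cap A_{\tau(S)}$ has three distinct sextic components) or some other argument forcing order-three vanishing of $A_S|_\Pi$ along $r_i$; as written, the proof is incomplete at its decisive point.
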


It suffices to show that $\deg(E_i)=6$ for $i=1,2$, the remaining assertions being already established in Propositions \ref{prop:V-tau}, \ref{prop:bijection} and Lemma \ref{lem:exclusion} or follow immediately from these.
We prove the equalities $\deg(E_i)=6$, $i=1,2$ separately for each type of the group $\Aut^0(V)$, see Propositions~\ref{prop:GL2},~\ref{prop:Gm2} and~\ref{prop:Ga-Gm}. Once we know that these equalities hold provided $\Aut^0(V)=\GL(2,\CC)$, the other two cases, where $\Aut^0(V)=\Gm^2$ and $\Aut^0(V)=\Ga\times\Gm$, respectively, are reduced to this one by a specialization argument.

\subsection{The case $\Aut^0(V)=\GL(2,\CC)$}\label{ss:GL2} 

\begin{prop}\label{prop:GL2} 
Suppose $\Aut^0(V)=\GL(2,\CC)$. Then $\deg(E_1)=\deg(E_2)=6$. 
\end{prop}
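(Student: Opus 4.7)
The plan is to apply Lemma~\ref{lem:exclusion} by exhibiting a cubic scroll $S\in\SSS_1$ whose decomposition $A_S\cdot\Pi=n_1r_1+n_2r_2+\gamma_S$ from Lemma~\ref{lem:3-components} satisfies $n_1=n_2=3$ and $\gamma_S\neq E_1,E_2$. Since the Fano--Mukai fourfold with $\Aut^0(V)=\GL(2,\CC)$ is unique up to isomorphism (Theorem~\ref{thm:Aut}), the natural candidate is one of the two $\Aut^0(V)$-invariant cubic cones on $V$ provided by Proposition~\ref{prop:cubic-cones}. Fix such a cone $S\in\SSS_1$; then $\tau(S)\in\SSS_2$ is the other invariant cone, $S$ and $\tau(S)$ are disjoint, and by Corollary~\ref{lem:isomorphism}(i) we have $S\notin C_1$. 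Hence the sextic scroll $D_S$ of Proposition~\ref{lem:disj-scrolls} is defined, and by Corollary~\ref{lem:isomorphism}(ii) the two $\tau$-invariant rulings $r_1,r_2$ of $\Pi$ meeting $S$ are distinct.

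First I would verify $\gamma_S\neq E_i$ for $i=1,2$. If $E_i\subset\gamma_S$, then $E_i\subset A_S$, and by $\tau$-invariance of $E_i$ also $E_i\subset A_{\tau(S)}$. In the $\GL(2,\CC)$ case, the first remark following Proposition~\ref{lem:disj-scrolls} records the cycle identity $A_S\cdot A_{\tau(S)}=3D_S$; set-theoretically this forces $E_i\subset D_S$. Since in addition $E_i\subset\Pi$ and $D_S\cap\Pi=r_1\cup r_2$ by Corollary~\ref{lem:isomorphism}(ii), the irreducible curve $E_i$ would lie inside one of the rulings $r_j$, hence would equal it by degree considerations. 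This contradicts Lemma~\ref{lem:no-line}, which asserts that no line on $V$ is contained in $V^\tau$.

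Next, the equality $n_1=n_2=:n$ would follow from $O(2,\CC)$-symmetry. By Theorem~\ref{thm:discrete}(iv) the involution $\tau$ acts on $\Aut^0(V)=\GL(2,\CC)$ via the Cartan involution, so its centralizer $Z_{\Aut^0(V)}(\tau)$ is the orthogonal group $O(2,\CC)$. This centralizer preserves $\Pi$ (since $\Pi=\Pi(\tau)$ is determined by $\tau$) and preserves $A_S$ (since $S$ is $\Aut^0(V)$-invariant), so it preserves the cycle $A_S\cdot\Pi$. The non-identity component of $O(2,\CC)$ induces an involution on the unordered pair of $\tau$-invariant rulings of $D_S$ highlighted in Proposition~\ref{lem:disj-scrolls}(iv), and this forces $n_1=n_2$. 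Alternatively, one may obtain the same equality by specialization from the generic statement of Lemma~\ref{lem:winding family}.

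The main obstacle is pinning down $n=3$ rather than $n\in\{1,2\}$. I plan to exploit the cycle identity $A_S\cdot A_{\tau(S)}=3D_S$ together with a local intersection-theoretic computation on $\Pi$: the tangency recorded by this triple multiplicity should propagate, via the containments $r_i\subset D_S$ and $r_i\subset A_S$, into the multiplicity $n=3$ of $A_S\cdot\Pi$ along each $r_i$. As a complementary route, the uniqueness of $V$ permits explicit computation in the Mukai realization $V=V(h)\subset\Omega$ with $\PP h\in\Omega^{\mathrm{sr}}$, describing $S$, $\Pi$, and the rulings $r_i$ in terms of the ${\rm G}_2$-root data relative to a maximal torus whose Weyl group contains $\tau$, and reading off $n$ directly from the weight diagram. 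Once $n=3$ is established, Lemma~\ref{lem:exclusion} delivers $\tilde\Pi\cong\PP^1\times\PP^1$ and hence $\deg(E_1)=\deg(E_2)=6$.
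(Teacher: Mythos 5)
Your strategy coincides with the paper's: take for $S\in\SSS_1$ the $\Aut^0(V)$-invariant cubic cone, show that in \eqref{eq:Gamma} one has $n_1=n_2=3$ with $\gamma_S\neq E_1,E_2$, and conclude by Lemma~\ref{lem:exclusion}. Your verification that $\gamma_S\neq E_i$ is essentially the paper's argument and is sound. The genuine gap is the value $n=3$: you explicitly defer it (``should propagate'', ``reading off $n$ from the weight diagram''), so as written the proposal establishes at best $n_1=n_2$ --- and even that rests on shaky ground, since it is not justified why the non-identity component of $\mathrm{O}(2,\CC)$ should \emph{swap} the two rulings $r_1,r_2$ rather than fix each of them, and Lemma~\ref{lem:winding family} is a statement about the \emph{general} $S\in\SSS_1$, which does not specialize for free to the single invariant cone.

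The missing step is short and uses only the identity you already quote. From $A_S\cdot A_{\tau(S)}=3D_S$ and Corollary~\ref{lem:isomorphism}(ii) one gets the $1$-cycle identity
\[
A_S\cdot A_{\tau(S)}\cdot\Pi=3D_S\cdot\Pi=3(r_1+r_2).
\]
On the other hand, intersect \eqref{eq:Gamma} with $A_{\tau(S)}$. The rulings $r_1,r_2$ lie on $D_S\subset A_S\cap A_{\tau(S)}$, whereas $\gamma_S$ is not contained in $A_{\tau(S)}$: otherwise $\gamma_S\subset A_S\cap A_{\tau(S)}\cap\Pi=D_S\cap\Pi=r_1\cup r_2$, impossible for a section of $\Pi$. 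Hence the one-dimensional part of $A_S\cdot A_{\tau(S)}\cdot\Pi$ equals $n_1r_1+n_2r_2$, and comparison with the display above yields $n_1=n_2=3$ at once (this also makes your separate $\mathrm{O}(2,\CC)$ and specialization arguments unnecessary). With this step supplied, the rest of your proposal goes through and matches the paper's proof.
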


\begin{proof}
Let $S\in\SSS_1$ be an $\Aut^0(V)$-invariant cubic cone on $V$. There is a unique such cone in $\SSS_1$, and therefore $\tau(S)$ is a unique $\Aut^0(V)$-invariant cubic cone in $\SSS_2$, see Proposition \ref{prop:cubic-cones}. Furthermore, $S$ and $\tau(S)$ are disjoint, see Proposition \ref{prop:prelim-cones}(k).
According to~\cite[Theorem~13.5(c)]{PZ18} one has $A_S\cdot A_{\tau(S)}=3D_S$. Hence 
\begin{equation}\label{eq:triple-intersection} A_S\cdot A_{\tau(S)}\cdot\Pi=3D_S\cdot\Pi=3(r_1+r_2),\end{equation}
see Corollary \ref{lem:isomorphism}(ii).
 It follows by~\eqref{eq:Gamma} that $A_S\cdot\Pi=3(r_1+r_2)+\gamma_S$, that is, $n_1=n_2=3$. We claim that $\gamma_S$ is different from $E_1$ and $E_2$. Indeed,  otherwise $\tau(\gamma_S)=\gamma_S$, and therefore $\gamma_S\subset A_S\cap A_{\tau(S)}\cap\Pi=D_S\cap\Pi$. The latter contradicts \eqref{eq:triple-intersection}, which proves our claim.
Now Lemma~\ref{lem:exclusion} applies and gives the result. 
\end{proof}

\subsection{The case $\Aut^0(V)=\Gm^2$}\label{ss:Gm2}

In this case we use the Mukai presentation of the Fano--Mukai fourfolds of genus 10, see Subsection~\ref{ss:Mukai}.  

\begin{prop}\label{prop:Gm2} 
Assume $\Aut^0(V)=\Gm^2$. Then $\deg(E_1)=\deg(E_2)=6$.
\end{prop}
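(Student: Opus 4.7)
The plan is to specialize to the $\GL(2,\CC)$ case handled in Proposition~\ref{prop:GL2}. Via the Mukai realization (Subsection~\ref{ss:Mukai}), fix a central Weyl involution $\tau \in G$; its fixed-point set $(\PP\fg)^\tau \subset \PP^{13}$ is the disjoint union of the two projective eigenspaces of $\tau$ acting on $\fg$. For every $\PP h \in (\PP\fg)^\tau \setminus D_\ell$ the hyperplane $h^\perp$ is $\tau$-invariant, $\tau$ acts on $V(h) = h^\perp \cap \Omega$, and the fixed locus satisfies
\[
V(h)^\tau = h^\perp \cap \Omega^\tau.
\]
Thus the fixed loci form a linear family of hyperplane sections of the fixed subvariety $\Omega^\tau \subset \PP^{13}$ as $\PP h$ varies in $(\PP\fg)^\tau \setminus D_\ell$.

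Next, I would pick $\PP h_0 \in (\PP\fg)^\tau \cap \Omega^{\mathrm{sr}}$ corresponding to the unique $\GL(2,\CC)$-type fourfold $V_0 = V(h_0)$ (possible because $\tau \in \Aut(V_0) = \Stab_G(\PP h_0)$), and $\PP h_1 \in (\PP\fg)^\tau$ with $\PP h_1 \in D_t \setminus D_\ell$ for some $t \neq \ell,\s$, yielding a $\Gm^2$-type $V(h_1)$. Arranging for both to lie in the same projective eigenspace of $\tau$, I would then connect them by a smooth connected curve $T$ in this eigenspace, removing finitely many points of $D_\ell \cap T$. Over $T$, the family $\mathcal{F} = \{(s,v) \in T \times \Omega^\tau : v \in h_s^\perp\}$ is flat, being the pullback of the incidence variety $\{(h,v) \in \PP^{13} \times \Omega^\tau : v \in h^\perp\}$. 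By Propositions~\ref{prop:V-tau}(i) and~\ref{prop:bijection}(iii), each fiber $V(h_s)^\tau = E_1^s \sqcup E_2^s$ is a disjoint union of two smooth rational curves with $\deg(E_1^s) + \deg(E_2^s) = 12$.

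Since each fiber has exactly two connected components, by shrinking $T$ to a simply connected analytic neighborhood of $s_0$ (where $h_{s_0} = h_0$) one removes any possible monodromy between the two, so $\mathcal{F}$ splits over this neighborhood as a disjoint union $\mathcal{E}_1 \sqcup \mathcal{E}_2$ with each $\mathcal{E}_i$ flat over $T$. Then the Hilbert polynomial of each $E_i^s$ is constant in $s$, so $\deg(E_i^s)$ is constant along $T$; evaluating at $s_0$ and using Proposition~\ref{prop:GL2} gives $\deg(E_i^s) = 6$ for every $s$, in particular for the chosen $\Gm^2$-type $V(h_1)$. Since $\PP h_1$ was arbitrary and the $\Gm^2$-type fourfolds form a connected family (alternatively, by Corollary~\ref{cor:aut-GL2} all involutions produce conjugate fixed loci), the equalities $\deg(E_1) = \deg(E_2) = 6$ propagate to every $\Gm^2$-type $V$.

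The main obstacle lies in the global geometric control required to construct $T$: one must guarantee that $\Omega^{\mathrm{sr}}$ and $D_t \setminus D_\ell$ meet a common projective eigenspace of $\tau$, that points in each can be joined by a curve inside that eigenspace avoiding $D_\ell$, and that every fiber of $\mathcal{F} \to T$ realizes the two-component form predicted by Proposition~\ref{prop:V-tau}(i). These are essentially transversality and dimension-count issues for $\Omega^\tau$ inside $\PP^{13}$, which should follow from the explicit description of the relevant $G$-orbits in Theorem~\ref{thm:orbits} together with the uniform local description of the fixed locus already established in Section~\ref{ss:first}.
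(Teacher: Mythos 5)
Your overall strategy --- specialize the $\Gm^2$ case to the $\GL(2,\CC)$ case inside a $\tau$-equivariant family of hyperplane sections parameterized by a curve in the fixed locus $(\PP\fg)^\tau$, and use local constancy of the degrees of the two components of the fiberwise fixed locus --- is exactly the paper's strategy. However, the step you defer to the last paragraph ("the main obstacle\dots to construct $T$") is not a routine transversality check; it is the actual content of the proof, and as written your argument has a genuine gap there. You need a concrete curve $T$ in a single $\tau$-eigenspace that avoids being trapped in $D_\ell$, hits $\Omega^{\mathrm{sr}}$, hits every orbit $D_t\setminus D_\ell$ (so that every isomorphism class of $\Gm^2$-type fourfold is represented), and along which $\tau$ restricts on each fiber to an involution in $\Aut(V(h))\setminus\Aut^0(V(h))$ --- this last point is needed before you may invoke Propositions \ref{prop:V-tau} and \ref{prop:bijection} fiberwise, and you do not address it at all (a priori a semisimple involution of $G$ fixing $\PP h$ could land inside the connected stabilizer).

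The paper's missing ingredient is the choice $T=\PP\fh$, where $\fh=\Lie(T)$ is the Cartan subalgebra of a maximal torus normalized by $\tau$ and $\tau$ lifts the central element of $\W=N_G(T)/T$ (using the splitting $N_G(T)\cong T\rtimes\W$). This single choice settles every one of your obstacles at once: $\tau|_{\fh}=-\id$, so $\PP\fh$ is pointwise $\tau$-fixed and lies in the eigenspace $\PP F^-$; $\PP\fh$ meets every semisimple orbit, in particular $\Omega^{\mathrm{sr}}$ (in the length-$3$ orbit $\Orb_\s$) and every $D_t\setminus D_\ell$, while $\PP\fh\setminus D_\ell\cong\PP^1\setminus\{3\ \text{points}\}$ is connected; and $T\subset\Aut^0(V(h))$ with $\tau$ acting on $T$ by inversion forces $\tau\notin\Aut^0(V(h))$ along the whole line. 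The constancy of $\deg(E_i(h))$ then follows from smoothness of $(\mathscr{F}_T)^\tau$ (fixed locus of a reductive group on a smooth total space), which is cleaner than your flatness-of-an-incidence-variety argument. Finally, note that your parenthetical appeal to Corollary \ref{cor:aut-GL2} only handles the choice of involution on a \emph{fixed} $V$; to pass from points of $\PP\fh$ to an arbitrary $\Gm^2$-type $V(h')$ you must conjugate by an element of $G$ carrying $\PP h$ to $\PP h'$, which transports $\tau$, $V(h)$ and the curves $E_i(h)$ simultaneously. So: right road map, but the bridge you label as "should follow from Theorem \ref{thm:orbits}" is where the proof actually lives.
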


\begin{proof}
Let $T$  be a maximal torus of $G$ and $\fh={\Lie}(T)$ be the corresponding Cartan subalgebra of $\fg={\Lie}(G)$. The $T$-action on $\PP\fg=\PP^{13}$ fixes pointwise the projective line 
$\PP\fh$. Hence $T\subset \Aut (V(h))={\rm Stab}_G(\PP h)$ for any point $\PP h\in\PP\fh$. If $h\in\fg\setminus\{0\}$ is semisimple then $\Aut (V(h))$ contains a maximal torus of $G$.
Since the maximal tori in $G$ are conjugate, the line $\PP\fh$ intersects each semisimple $G$-orbit in $\PP\fg$ including $\Omega^{\mathrm sr}$ and does not intersect the orbit $\Omega^\aaa$; cf.~\cite[Proof of Lemma~1]{KR13} and \cite[Corollary~2.5.1]{PZ22}. 

Let $\mathscr{F}\to \PP\fg$  be the universal family of hyperplane sections of $\Omega$. The restriction 
\[\mathscr{F}_T:=\mathscr{F}|_{\PP\fh\setminus D_\ell}\to \PP\fh\setminus D_\ell\cong\PP^1\setminus \{3\,\text{points}\},\] 
see~\cite[Lemma~2.8]{PZ22},
 is a smooth one-parameter family of Fano--Mukai fourfolds  of genus 10 consisting of the smooth hyperplane sections $V(h)$ of $\Omega$ which satisfy $T\subset \Aut(V(h))$ where $\Aut(V(h))$ coincides with the stabilizer ${\rm Stab}_G(\PP h)$, see Theorem \ref{thm:Aut}. 

Let $N_G(T)$ stand for the normalizer of $T$ in $G$ and let $W=N_G(T)/T$ be the Weyl group of $G$. One has $W\cong \fD_6$ where $\fD_6$ is the dihedral group of order 12. By~\cite{AH17} there is a splitting $N_G(T)=T\rtimes W$. Let $\tau\in N_G(T)$ be the central element of order 2 in $W\subset N_G(T)$. Then $\tau$ acts on the Cartan subalgebra $\fh$ via the central symmetry $h\mapsto -h$, acts on $T$ via the inversion $t\mapsto t^{-1}$ and acts identically on the line $\PP \fh\subset\PP\fg$. For any point $\PP h\in\PP \fh$ the torus $T$ acts effectively on $V(h)$. The involution $\tau$ acts on the fivefold $\Omega$ preserving the hyperplane section $V(h)$. Thus, 
\[\Aut(V(h))=\mathrm{Stab}_G(\PP h)\supset \langle T, \tau\rangle\cong T\rtimes\ZZ/2\ZZ.\]
Hence $\tau$ acts on the total space of the family $\mathscr{F}_T\to\PP\fh\setminus D_\ell$ and acts effectively on the fiber $V(h)$ for any $\PP h\in \PP\fh\setminus D_\ell$. 

Since $\langle\tau\rangle\cong\ZZ/2\ZZ$ is a reductive group, the fixed point set $(\mathscr{F}_T)^\tau$ is a smooth subvariety of the smooth variety  $\mathscr{F}_T$. Any fiber $V(h)^\tau$ of $(\mathscr{F}_T)^\tau\to \PP\fh\setminus D_{\ell}$ is a union of two disjoint smooth rational curves $E_1(h), E_2(h)$ where $\deg (E_1(h)+E_2(h))=12$. Hence $(\mathscr{F}_T)^\tau$ is a smooth rational surface fibered over $\PP\fh\setminus D_\ell=\PP^1\setminus \{3\,\text{points}\}$ into a family of smooth reducible curves $E_1(h)\cup E_2(h)$ of degree 12. It follows that the lower semicontinuous function $\deg(E_i(h))$ on $\PP\fh\setminus D_\ell$ is constant for $i=1,2$. For a point $h_0\in \PP\fh\cap D_\s=\PP\fh\cap \Omega^{\mathrm{sr}}$ with $\Aut^0(V(h_0))\cong\GL(2,\ZZ)$ one has
\[\Aut(V(h_0))=\Aut^0(V(h_0))\rtimes\langle\tau\rangle\cong\GL(2,\ZZ)\rtimes\ZZ/2\ZZ,\] see~\cite[Theorem~A]{PZ22}.
By Proposition~\ref{prop:GL2} one has $\deg (E_1(h_0))=\deg (E_2(h_0))=6$. Therefore, $\deg (E_1(h))=\deg (E_2(h))=6$ for any $\PP h\in \PP\fh\setminus D_\ell$. 

The same equalities hold for any $\PP h'\in \PP\fg\setminus (D_\ell\cup D_\s)$, that is, for any $h'$ with $\Aut^0(V(h'))\cong\Gm^2$. Indeed, the projective line $\PP\fh$ meets any $G$-orbit $D_t\setminus D_\ell$, $t\notin\{\ell,\s\}$, see Theorem \ref{thm:orbits}(ii) and \cite[Corollary 3.5.1]{PZ22}. Hence
the $G$-orbit of $\PP h'$ contains a point $\PP h\in \PP\fh\setminus (D_\ell\cup D_\s)$. Thus, there is an element $g\in G\subset\mathrm{PGL}(14,\CC)$ which sends $\PP h$ to $\PP h'$, $V(h)$ isomorphically onto $V(h')$,  $\tau$ to an involution $\tau'\in \Aut(V(h'))\setminus\Aut^0(V(h'))$, and $E_i(h)$ to $E_i(h')$, $i=1,2$, up to switching the order. 

By Corollary~\ref{cor:aut-GL2} the choice of an involution $\tau\in\Aut(V)\setminus\Aut^0(V)$ is irrelevant for our purposes. Therefore, the above argument proves the proposition.
\end{proof}

\subsection{The case $\Aut^0(V)=\Ga\times\Gm$}\label{ss:Ga-Gm} The proof in this case is similar to the preceding one. We construct a one-parameter family  of Fano--Mukai fourfolds $\{V_t\}$ with $\Aut^0(V_t)=\Ga\times\Gm$ for $t\neq 0$,
which specializes to $V_0$ with $\Aut^0(V_0)=\GL(2,\CC)$, and then apply the same argument as before.

\begin{prop}\label{prop:Ga-Gm} 
Assume $\Aut^0(V)=\Ga\times\Gm$. Then $\deg(E_1)=\deg(E_2)=6$.
\end{prop}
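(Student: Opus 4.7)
The plan is to mirror the proof of Proposition~\ref{prop:Gm2}. By Theorem~\ref{thm:Aut} the Fano--Mukai fourfold with $\Aut^0\cong\Ga\times\Gm$ is unique up to isomorphism, so it suffices to realize one such $V$ as a generic fiber of a smooth one-parameter family $\{V_t\}_{t\in\A^1}$ that specializes to a $V_0$ with $\Aut^0(V_0)\cong\GL(2,\CC)$, together with a fixed involution $c\in G$ lying in $\Aut(V_t)\setminus\Aut^0(V_t)$ for every $t$. Since Proposition~\ref{prop:GL2} gives $\deg E_i(0)=6$, a semicontinuity argument identical to the one that concludes the proof of Proposition~\ref{prop:Gm2} will transport these equalities to every fiber, in particular to $V$.

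For the family I would take $h_t=f+tn$, where $f,n\in\fg$ are chosen as in Theorem~\ref{thm:orbits}(iii): $f$ subregular semisimple, $n$ nilpotent, $[f,n]=0$, and $f+n$ regular. For $t\neq 0$ the Jordan decomposition of $h_t$ has semisimple part $f$ and nilpotent part $tn$, so its centralizer in $\fg$ equals that of $f+n$ and has dimension $\rk\fg$; hence $h_t$ is regular and $\PP h_t\in\Omega^\aaa$, while $\PP h_0=\PP f\in\Omega^{\mathrm{sr}}$. All $\PP h_t$ lie in $D_\s\setminus D_\ell$, so each $V_t=V(h_t)$ is a smooth Fano--Mukai fourfold of genus $10$ with the desired $\Aut^0(V_t)$ by Theorem~\ref{thm:Aut}.

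For the fixed involution I would invoke Proposition~\ref{prop:aut-GL2}(v). Fix a Cartan involution $c\in\Aut(V_0)\setminus\Aut^0(V_0)$ together with the $c$-invariant subgroup $H^+\subset\GL(2,\CC)$ of that statement, isomorphic to $\Ga\times\Gm$ with $c$ acting by inversion. Since $\mathfrak{gl}(2,\CC)\subset\fg$ coincides with the centralizer of $f$, I can take $n$ to equal the generator $n^+$ of the Lie algebra of the unipotent radical of $H^+$, so that $c(n)=-n$; moreover $c$ stabilizes $\PP f$, so $c(f)=\pm f$, and with an appropriate choice of $c$ within its conjugacy class (or of $f$ within $\Omega^{\mathrm{sr}}$) one arranges $c(f)=-f$. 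Then $c(h_t)=-h_t$, placing $c\in\Stab_G(\PP h_t)=\Aut(V_t)$ for every $t$. A direct adjoint computation shows that $H^+$ fixes $h_t$ outright, so $H^+\subseteq\Aut^0(V_t)$ with equality for $t\neq 0$ by dimension; thus $c\notin\Aut^0(V_t)$ for any $t$, and by Corollary~\ref{cor:aut-GL2} this common $c$ realizes a central Weyl involution on each fiber.

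Once this setup is in place, the semicontinuity argument of Proposition~\ref{prop:Gm2} carries over almost verbatim: the total space $\mathscr{V}\to\A^1$ is smooth, the fixed-point subscheme $\mathscr{V}^c$ is smooth by reductivity of $\langle c\rangle$, and by Proposition~\ref{prop:V-tau} it splits into two smooth components that are $\PP^1$-bundles over $\A^1$. Their fiberwise degrees are locally constant on $\A^1$, and Proposition~\ref{prop:GL2} pins them down to $6$ at $t=0$, giving $\deg E_i(t)=6$ for every $t$. The most delicate step will be the third paragraph: matching the abstract group-theoretic data of Proposition~\ref{prop:aut-GL2}(v) with the concrete Mukai pair $(\fg,f)$ and verifying that $f+n^+$ remains regular, so that $\PP h_t\in\Omega^\aaa$ for $t\neq 0$; once this is settled, the rest is a routine deformation argument.
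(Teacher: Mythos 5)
Your proposal is correct and follows essentially the same route as the paper: the same one-parameter family $h_t=f+tn$ specializing from $\Omega^{\aaa}$ to $\Omega^{\mathrm{sr}}$, a single involution acting on every fiber, and the same semicontinuity argument on the smooth fixed-point surface reducing everything to Proposition~\ref{prop:GL2}. The only organizational difference is that the paper sidesteps the ``delicate step'' you flag by taking $f,n$ as supplied by Theorem~\ref{thm:orbits}(iii) (so $f+n$ is regular by hypothesis), starting from the order-two generator of $\Aut(V(f+n))=H\rtimes\ZZ/2\ZZ$ and using uniqueness of the Jordan decomposition to see that it negates $f$ and $n$ simultaneously, and only afterwards identifying $H$ with one of the subgroups $H^\pm$ of Proposition~\ref{prop:aut-GL2}(v).
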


\begin{proof} According to~\cite[Lemma 1 and its proof]{KR13}, see also Theorem~\ref{thm:orbits}(iii) 
there exist commuting  elements $f,n\in\fg$ such that $f$ is subregular semisimple, $n$ is nilpotent 
and $g=f+n$ is regular. 
We have $\PP f\in\Omega^{\mathrm{sr}}$ and $\PP (f+n)\in\Omega^\aaa$. It follows from the description 
of $G$-orbits in $\fg$, see Theorem~\ref{thm:orbits}, that the same inclusions hold if one replaces $n$ 
by $\lambda n$ with $\lambda\in\CC\setminus\{0\}$ and $f+n$ by $f+\lambda n\in\ff:={\rm span} (f,n)$. 

Notice that $\ff$  is a two-dimensional abelian Lie subalgebra of $\fg$.
Consider the projective line $\PP \ff\subset\PP\fg$. Let $\omega=\PP \ff\setminus\{\PP n\}$ 
be the image in $\PP\fg$ of the affine line 
$\{f+\lambda n\,|\,\lambda\in\CC\}$ and let $\omega^\aaa=\omega\setminus\{\PP f\}$. 
 Since $f$ is subregular, any regular element  of  $\ff\setminus\{\lambda n|\lambda\in\CC\}$ is mixed. Since $n$ is 
 nilpotent, one has $\PP n\in D_\ell\cap D_\s$, see Theorem~\ref{thm:orbits}(i). It follows that 
\[\omega=\PP\ff\setminus D_\ell\subset D_\s\setminus D_\ell\quad\text{and}\quad
\omega^\aaa=\PP\ff\cap\Omega^\aaa=\omega\cap\Omega^\aaa.\] 

Consider the smooth one-parameter  family $\mathscr{F}_\omega:=\mathscr{F}|_{\omega}\to\omega$ of hyperplane 
sections $V(h)$ of $\Omega$ with $\PP h\in\omega$. For $\PP h\in\omega$ we have by Theorem~\ref{thm:Aut},
\[\Aut^0(V(h))\cong\Ga\times\Gm\Leftrightarrow\PP h\in \omega^\aaa\quad\text{and}\quad 
\Aut^0(V(h))\cong\GL(2,\CC)\Leftrightarrow \PP h=\PP f\in\omega\setminus\omega^\aaa.\] 
Let 
\[L=\big({\rm Stab}_G(\PP f)\big)^0=\Aut^0(V(f))\cong\GL(2,\CC)\]
and 
\[\tilde L={\rm Stab}_G(\PP f)=\Aut(V(f))=L\rtimes\ZZ/2\ZZ.\] 
For $\lambda \neq 0$ we let $g_\lambda=f+\lambda n$,
\[H_\lambda=\big({\rm Stab}_G(\PP g_\lambda)\big)^0=\Aut^0(V(g_\lambda))\cong\Ga\times\Gm\]
and 
\[\tilde H_\lambda={\rm Stab}_G(\PP g_\lambda)=\Aut(V(g_\lambda))=H_\lambda\rtimes\ZZ/2\ZZ.\] 
Thus, for $h\in \tilde H_\lambda$ one has $\Ad(h)(g_\lambda)=\mu g_\lambda$ for some $\mu\in\CC\setminus\{0\}$. 
In view of the uniqueness of the Jordan decomposition we have $\Ad(h)(f)=\mu f$ and $\Ad(h)(n)=\mu n$.
It follows that $\tilde H_\lambda\subset\tilde L$ and $H_\lambda\subset L$. 

The centralizer ${\rm Cent}_{\fg}(g_\lambda)$ is contained in $\Lie({\rm Stab}_G(\PP g_\lambda))$. Hence also 
\[\ff\subseteq\Lie({\rm Stab}_G(\PP g_\lambda))={\Lie}(H_\lambda).\] 
Both $\ff$ and ${\Lie}(H_\lambda)$ are Lie subalgebras of dimension 2, hence they coincide. 
Thus, ${\Lie}(H_\lambda)=\ff$ does not depend on $\lambda$, and therefore $H_\lambda=H\subset L$ for all 
$\lambda\in\CC\setminus\{0\}$ where $H:=H_1$. 

The generator $\tau\in G$  of the factor $\ZZ/2\ZZ$ in the decomposition $\tilde H_\lambda=H\rtimes\ZZ/2\ZZ$ stabilizes the point $\PP g_1$, that is, $\Ad(\tau)(f+n)=\pm(f+n)$. So, $\Ad(\tau)(f)=\pm f$ and $\Ad(\tau)(n)=\pm n$ with the same signs in both cases.  
Therefore, $\Ad(\tau)(g_\lambda) = \pm g_\lambda$ for all $\lambda\in\CC$. The latter means that $\tau$ fixes any point 
$\PP g_\lambda\in D_\s$ and acts via involution on the Fano--Mukai fourfold $V(g_\lambda)$ interchanging the pairs of disjoint $\Aut^0(V(g_\lambda))$-invariant cubic cones on $V(g_\lambda)$. Furthermore, $\tau\in\tilde L$ serves as a generator of the factor $\ZZ/2\ZZ$ in the decomposition $\tilde L=L\rtimes\ZZ/2\ZZ$. 
Due to Theorem~\ref{thm:discrete}(iv) and Corollary~\ref{cor:aut-GL2}, up to conjugation by an element of $L$, $\tau$ 
acts on $L=\GL(2,\CC)$ via the Cartan involution $g\mapsto (g^t)^{-1}$ and acts on $H$ via the inversion. Thus, $H$ coincides with one of the subgroups $H^\pm$ from Proposition~\ref{prop:aut-GL2}(v). 

Finally, the involution $\tau$ acts on the total space of the smooth family $\mathscr{F}_\omega\to\omega$ preserving each fiber $V(g_\lambda)$ with $\PP g_\lambda\in \omega$. The fixed point set $(\mathscr{F}_\omega)^\tau$ is a smooth surface fibered into a family of smooth reduced curves of degree 12, each curve being the union of two disjoint rational components $E_1(\lambda)$ and $E_2(\lambda)$. The pair of degrees of these curves is locally constant in the family and equals $(6,6)$ for $\lambda=0$, hence also for any $\lambda\in\CC$. Since the fourfolds $V(h)$ with $\PP h\in\Omega^\aaa$  are pairwise isomorphic and $\omega^\aaa\subset\Omega^\aaa$, the same conclusion holds for any $\PP h\in\Omega^\aaa$. 
\end{proof}

\section{Smoothness of $\Pi$}\label{sec:smooth}
Due to the following proposition $\Pi$ is smooth, so the bijective normalization morphism $\tilde\Pi\to\Pi$ is an isomorphism, see Proposition \ref{prop:bijection} (ii).
\begin{prop}\label{prop:smoothness}
Given a Fano--Mukai fourfold $V$ of genus 10 and an involution $\tau\in\Aut(V)\setminus\Aut^0(V)$, the scroll $\Pi$  in $\tau$-invariant lines on $V$ is smooth. 
\end{prop}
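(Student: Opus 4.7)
Proof plan. The morphism $\nu = s|_{\tilde\Pi}\colon\tilde\Pi\to\Pi$ is a bijective morphism from the smooth surface $\tilde\Pi\cong\PP^1\times\PP^1$ (Proposition~\ref{prop:bijection}(ii)), so to conclude that $\Pi$ is smooth it suffices to verify that the differential $d\nu_{\tilde p}\colon T_{\tilde p}\tilde\Pi\to T_{\nu(\tilde p)}V$ is injective at every $\tilde p\in\tilde\Pi$; indeed, this upgrades the bijective $\nu$ to a scheme-theoretic isomorphism. Decomposing $T_{\tilde p}\tilde\Pi = T_{\tilde p}\tilde r \oplus T_{\tilde p}\tilde\sigma$, where $\tilde r$ is the ruling and $\tilde\sigma$ a section through $\tilde p$, the restriction $\nu|_{\tilde r}\colon\tilde r\to r$ is automatically an isomorphism onto the ruling $r\subset V$, so injectivity of $d\nu$ on the ruling direction is immediate. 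The real content is to control the transverse direction.

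The main idea is to extract this control from the $\tau$-action at points of $\tilde E_1\cup\tilde E_2$. Fix $\tilde p\in\tilde E_i$; then both $\tilde p$ and $p=\nu(\tilde p)$ are $\tau$-fixed, so $\tau$ acts on $T_{\tilde p}\tilde\Pi$ and on $T_pV$, decomposing each into $(\pm 1)$-eigenspaces. The section $\tilde E_i$ is pointwise fixed, forcing $T_{\tilde p}\tilde E_i$ into the $(+1)$-eigenspace; the ruling $\tilde r\cong\PP^1$ is $\tau$-invariant with $\tilde p$ as one of the two fixed points on $\tilde r$ (by Lemma~\ref{lem:no-line}), and a standard computation for an involution on $\PP^1$ with two fixed points forces $T_{\tilde p}\tilde r$ into the $(-1)$-eigenspace. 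Exactly the same analysis on $V$ yields $T_pE_i\subset (T_pV)^{\tau=+1}$ and $T_pr\subset(T_pV)^{\tau=-1}$. Since $d\nu$ is $\tau$-equivariant and restricts to isomorphisms on both $\tilde E_i\to E_i$ and $\tilde r\to r$ (as $\nu$ is bijective between smooth rational curves), it injects each summand into a distinct eigenspace of $T_pV$, and so is injective at $\tilde p$. Hence $\Pi$ is smooth at every point of $V^\tau=E_1\cup E_2$.

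It remains to handle the open locus $\Pi\setminus(E_1\cup E_2)$. The singular locus $\Pi^{\rm sing}$ is a closed $\tau$-invariant subset, disjoint from $E_1\cup E_2$ by the preceding step. Any $1$-dimensional component of $\Pi^{\rm sing}$ would be a rational curve (as $\Pi$ is a rational scroll) carrying a fixed-point-free involution; since every involution of $\PP^1$ has fixed points, this is impossible. Thus $\Pi^{\rm sing}$ can only consist of isolated points, necessarily paired by $\tau$ on common rulings. To rule these out, one exploits the smooth sextic scroll $D_S$ through such a $p$ (for any $S\in\SSS_1\setminus C_1$ with $r$ one of the two $\tau$-invariant rulings of $D_S$, cf.~Corollary~\ref{lem:isomorphism} and Remark~2 following Proposition~\ref{lem:disj-scrolls}); the smoothness of $D_S$ supplies a well-defined tangent plane $T_pD_S$ containing $T_pr$, and by varying $S$ in the $1$-parameter family of scrolls whose rulings include $r$, one obtains a pencil of $2$-planes in $T_pV$ that pins down the image $d\nu(T_{\tilde p}\tilde\sigma)$ and forces it to be transverse to $T_pr$.

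The main obstacle will be making this last reduction precise: the $\tau$-eigenspace dichotomy is clean at fixed points but unavailable at generic $\tilde p$, so at such points one really must do the tangent-plane comparison for the family $\{D_S\}$, or equivalently verify that the center of the projection $\PP^{13}\dashrightarrow\PP^{12}$ underlying the $(1,6)$-embedding of $\tilde\Pi$ in Theorem~\ref{thm:towards-mthm}(ii) avoids the tangent variety of $\tilde\Pi$. This is the step where the particular geometry of the winding family of smooth sextic scrolls enters in an essential way, exactly mirroring the parallel proof outlined in Remark~2 for $D_S$ itself with the pair $(E_1,E_2)$ replaced by $(\Gamma_S,\tau(\Gamma_S))$.
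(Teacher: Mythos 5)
Your first step (smoothness of $\Pi$ along $E_1\cup E_2$ via the $\tau$-eigenspace decomposition of the tangent spaces) is correct and is in fact a cleaner justification of Lemma~\xref{lem:0-dim}(i) than the one-line rank count in the paper. But the rest of the argument has real problems. For the exclusion of a $1$-dimensional singular locus, you argue that a curve component of $\sing\Pi$ would carry a fixed-point-free involution; this presupposes that the individual component is $\tau$-invariant, which need not hold ($\tau$ could interchange two components). The statement itself is true, but the working argument is the one in Lemma~\xref{lem:0-dim}(ii): the preimage $\tilde\gamma$ of such a curve avoids $\tilde E_1\cup\tilde E_2$, hence is a constant section of $\tilde\Pi\cong\PP^1\times\PP^1$ mapping bijectively onto $\gamma$, so $s|_{\tilde\Pi}$ is \'etale at a general point of $\tilde\gamma$, contradicting $\gamma\subset\sing\Pi$.

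The decisive gap is the third step, which you yourself flag as ``the main obstacle'': you never actually exclude isolated singular points. The route you sketch is moreover circular, since the smoothness of $D_S$ invoked from Remark~2 after Proposition~\xref{lem:disj-scrolls} is stated there to be provable ``following the lines of the proof of Proposition~\xref{prop:smoothness}'' --- i.e.\ it is a consequence of the very argument you are trying to supply, and the paper explicitly refrains from using it. The missing idea is the following symmetry argument. Realize $\tilde\Pi$ as the rational normal $(1,6)$-scroll $\Pi_{\mathrm{norm}}\subset\PP^{13}$ and the normalization as a linear projection $\pi$ with $\tau$-invariant center $Z=\PP^{12-k}$ disjoint from $\Pi_{\mathrm{norm}}$; then $Q=\pi(\bar Q)$ is singular iff $Z$ meets $\TT_{\bar Q}\Pi_{\mathrm{norm}}$. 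If $Q$ is singular, so is $\tau(Q)\neq Q$ on the same $\tau$-invariant ruling $r$, so $Z$ contains points $\bar P\in\TT_{\bar Q}\Pi_{\mathrm{norm}}$ and $\tau(\bar P)\in\TT_{\tau(\bar Q)}\Pi_{\mathrm{norm}}$. The tangent planes along $\bar r$ form a pencil with axis $\bar r$ inside a $\PP^3$ (Lemma~\xref{lem:proj}(iv)); the line $(\bar P\,\tau(\bar P))\subset Z$ lies in that $\PP^3$, misses $\bar r$, and therefore meets \emph{every} plane of the pencil, making every point of $r$ singular and contradicting $\dim(\sing\Pi)=0$. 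Without this (or an equivalent verification that $Z$ avoids $\mathrm{Tan}(\Pi_{\mathrm{norm}})$), the proof is incomplete.
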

The proof of Proposition \ref{prop:smoothness} is preceded by Lemmas \ref{lem:0-dim} and \ref{lem:proj}. 
\begin{lem}\label{lem:0-dim} $\,$
\begin{enumerate}\item[(i)] $(\sing \Pi)\cap (E_1\cup E_2)=\emptyset$.
\item[(ii)] $\dim(\sing\Pi)=0$.
\end{enumerate}
\end{lem}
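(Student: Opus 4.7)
For part (i), my plan is to show directly that $\Pi$ is smooth at every $p \in E_1 \cup E_2$, by establishing the injectivity of the differential $ds_{\tilde p}$ at the unique preimage $\tilde p \in \tilde E_i$ of $p$; together with the bijectivity of $s|_{\tilde\Pi}$ from Proposition~\ref{prop:bijection}(ii), this injectivity makes $s|_{\tilde\Pi}$ a local isomorphism at $\tilde p$. I decompose $T_{\tilde p}\tilde\Pi = T_{\tilde p}\tilde E_i \oplus T_{\tilde p}\tilde r$, where $\tilde r$ is the ruling through $\tilde p$. The involution $\tau$ acts trivially on the first summand (since $\tilde E_i$ is pointwise fixed) and as $-1$ on the second (since $\tilde p$ is an isolated fixed point of the non-trivial involution $\tau|_{\tilde r}$ on $\tilde r \cong \PP^1$). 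By $\tau$-equivariance of $ds$ and the smoothness of $V^\tau = E_1 \cup E_2$, the image $ds(T_{\tilde p}\tilde E_i) = T_p E_i$ coincides with the $(+1)$-eigenspace $(T_pV)^+$, while $ds(T_{\tilde p}\tilde r) = T_p r$ is nonzero (as $s$ is an isomorphism on each ruling) and lies in the $(-1)$-eigenspace (as $\tau|_r$ is a non-trivial involution of $\PP^1$ by Lemma~\ref{lem:no-line}). These two images lie in complementary eigenspaces of $T_pV$, hence are linearly independent, so $ds_{\tilde p}$ is injective.

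For (ii), I argue by contradiction: assume $\sing\Pi$ has an irreducible component $\Gamma$ of positive dimension, and let $\tilde\Gamma = (s|_{\tilde\Pi})^{-1}(\Gamma)$. By (i), $\tilde\Gamma$ is disjoint from $\tilde E_1 \cup \tilde E_2$. Under the identification $\tilde\Pi \cong \PP^1 \times \PP^1$ of Theorem~\ref{thm:towards-mthm}(iii), both $\tilde E_i$ are constant sections of $\text{pr}_1$ and share a common divisor class $h$ with $h^2 = 0$. Writing $[\tilde\Gamma] = a h + b h'$ in $\Pic(\tilde\Pi)$ with $h'$ the class of a ruling, the condition $\tilde\Gamma \cdot \tilde E_i = b = 0$ forces $[\tilde\Gamma] = a h$ with $a > 0$, so $\tilde\Gamma$ is an effective divisor which is a union of constant sections of $\text{pr}_1$, all distinct from $\tilde E_1, \tilde E_2$. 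I fix one irreducible component $\tilde C \subset \tilde\Gamma$; since all constant sections of $\text{pr}_1$ are linearly equivalent, $C := s(\tilde C) \subset \sing\Pi$ is a rational curve in $\PP^{12}$ of the same degree as $E_i$, namely $6$.

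Next I identify $\Pi$ with the tangent developable $T(C)$ of $C$. Along $\tilde C$, the rank of $ds$ is at least $1$ (since $s$ is an isomorphism on each ruling) and at most $1$ (since $C \subset \sing\Pi$), hence equals $1$. Therefore at every $\tilde p \in \tilde C$ the one-dimensional image $ds(T_{\tilde p}\tilde\Pi)$ coincides with $ds(T_{\tilde p}\tilde r_p)$, the tangent direction to the ruling $r_p$ through $p = s(\tilde p)$. In particular the projective tangent line to $C$ at $p$, taken via the normalization $s|_{\tilde C}$, coincides with $r_p$. As $\tilde p$ varies over $\tilde C \cong \PP^1$, this yields a matched bijective parametrization of the rulings of $\Pi$ by the tangent lines of $C$, whence $\Pi = T(C)$.

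The contradiction then follows from a classical degree estimate for tangent developables. Parameterizing $C$ by $\phi\colon \PP^1 \to \PP^{12}$ whose coordinates are forms of degree $d = 6$, we have $T(C) = \{[\phi(u) + \lambda\phi'(u)]\}$, and the intersection with a general codimension-$2$ subspace $\{L_1 = L_2 = 0\}$ is cut out in $u$ by $L_2(\phi)L_1(\phi') - L_1(\phi)L_2(\phi')$. This polynomial has apparent degree $2d - 1$, but its leading coefficient vanishes identically, so it has at most $2d - 2 = 10$ roots. Hence $\deg \Pi = \deg T(C) \le 10$, contradicting $\deg\Pi = 12$ from Theorem~\ref{thm:towards-mthm}(ii). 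The main obstacle I foresee is rigorously identifying $\Pi$ with the entire tangent developable $T(C)$ when $C$ has cusps (i.e.\ points where $s|_{\tilde C}$ has vanishing differential), but the tangent direction via the normalization extends by continuity across such points, and the matching between rulings and tangent lines (both naturally parametrized by $\tilde C \cong \PP^1$) remains intact.
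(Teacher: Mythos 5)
Your proof is correct. For part (i) you follow the same route as the paper --- show that $s|_{\tilde\Pi}$ is unramified at $\tilde P\in\tilde E_i$ and conclude from the bijectivity of the normalization that $\Pi$ is smooth at $P$ --- but you supply a justification the paper leaves implicit: the images of $T_{\tilde P}\tilde E_i$ and of $T_{\tilde P}\tilde r$ lie in the $(+1)$- and $(-1)$-eigenspaces of $d\tau_P$ on $T_PV$, hence are independent. Some such argument is genuinely needed, since a priori $E_i$ and the ruling could be tangent at $P$, in which case the two restricted isomorphisms would not force rank $2$. For part (ii) your setup coincides with the paper's (the putative singular curve pulls back to a constant section of ${\rm pr}_1$ of degree $6$ disjoint from $\tilde E_1\cup\tilde E_2$), but the endgame differs. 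The paper asserts that $s|_{\tilde\Pi}$ is \'etale at a general point of the section ``by the same argument as in (i)''; that again presupposes transversality of the section with the rulings, and the eigenspace argument is unavailable since the section is not $\tau$-fixed. You instead confront the degenerate alternative head-on: if $ds$ has rank $1$ along the whole section, every ruling is the tangent line of the degree-$6$ rational curve $C$, so $\Pi$ would be the tangent developable of $C$, of degree at most $2\cdot 6-2=10<12=\deg\Pi$ by the Wronskian computation. This buys a complete treatment of exactly the case the paper's phrasing glosses over, at the modest cost of the classical degree bound for tangent surfaces. Two cosmetic remarks: since $s|_{\tilde\Pi}$ is bijective, $\tilde\Gamma$ is in fact irreducible, so it is itself a single constant section; and the degree bound is cleanest in homogeneous coordinates, where $L_1(\phi_u)L_2(\phi_v)-L_2(\phi_u)L_1(\phi_v)$ is a form of degree $2d-2=10$, so no separate discussion of the point at infinity is needed.
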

\begin{proof}
(i) Pick a point $\tilde P\in \tilde E_i$ and let $\tilde r$ be the ruling of $\tilde\Pi$ passing through $\tilde P$ and $r=s(\tilde r)$ be the ruling of $\Pi$ passing through $P=s(\tilde P)$. The restrictions  $s|_{\tilde E_i}\colon\tilde E_i\to E_i$ and $s|_{\tilde r}\colon\tilde r\to r$ are isomorphisms. Hence the rank of the differential $d(s|_{\tilde \Pi})_{\tilde P}\colon T_{\tilde P}\tilde \Pi\to T_P\Pi$ equals 2, so $s|_{\tilde\Pi}$ is \'etale at $\tilde P$. Now (i) follows. 

(ii) Assume that $\sing\Pi$ contains an irreducible curve $\gamma$, and let $\tilde\gamma=(s|_{\tilde\Pi})^{-1}(\gamma)$. 
By virtue of (i), $\tilde\gamma$ does not intersect $\tilde E_1\cup\tilde E_2$. Then $\deg(\tilde\gamma)=6$ and $\tilde\gamma$ is a constant section of the projection 
$\tilde\Pi\cong\PP^1\times\PP^1\stackrel{{\rm pr}_1}{\longrightarrow}\PP^1$. The morphism $s|_{\tilde\gamma}\colon\tilde\gamma\to\gamma$ is bijective, so \'etale at a general point $\tilde Q\in\tilde\gamma$. By the same argument as in the proof of (i), $s|_{\tilde\Pi}\colon\tilde\Pi\to\Pi$ is \'etale at $\tilde Q$. This contradicts our assumption $\gamma\subset\sing\Pi$. 
\end{proof}
In the sequel we use the following notation.
\begin{nota}  
The complete linear system of type $(1,6)$ on $\PP^1\times\PP^1$ embeds the quadric $\PP^1\times\PP^1$ 
onto a rational normal scroll $\Pi_{\mathrm{norm}}\subset \PP^{13}$ of degree 12. The fibers of the projection ${\rm pr}_1\colon\PP^1\times\PP^1\to\PP^1$ are sent to rulings of $\Pi_{\mathrm{norm}}$ and
the constant sections of  ${\rm pr}_1$ are sent to smooth rational 
normal sextic curves $\bar \gamma_t$ on $\Pi_{\mathrm{norm}}$. Any two different curves $\bar \gamma_t$ and $\bar \gamma_{t'}$ span two disjoint $\PP^6$'s in $\PP^{13}$. We identify $\Pi_{\mathrm{norm}}$ with the normalization $\tilde\Pi$ of $\Pi$. Thus, the linear projection  $\pi\colon \PP^{13}\to\PP^k:=\langle\Pi\rangle\subset\PP^{12}$ with center $\PP^{12-k}\subset \PP^{13}$ defined by the
linear system of hyperplane sections of $\Pi\subset \PP^{12}$ is the normalization morphism. 
By Proposition \ref{prop:bijection}(ii) $\pi$ is a bijection. So, $\PP^{12-k}$ intersects no non-tangent secant line to $\Pi_{\mathrm{norm}}$. Furthermore, it does not meet $\Pi_{\mathrm{norm}}$ because $\deg(\Pi_{\mathrm{norm}})=\deg(\Pi)=12$.

Following the notation in \cite{Har92}, for a point $\bar Q\in \Pi_{\mathrm{norm}}$ we let $T_{\bar Q}\Pi_{\mathrm{norm}}$ and
$\TT_{\bar Q}\Pi_{\mathrm{norm}}\subset\PP^{13}$ be the tangent plane and the projective tangent plane of $\Pi_{\mathrm{norm}}$ at $\bar Q$, respectively. Clearly, the image $Q=\pi(\bar Q)$ is a singular point of $\Pi$ if and only if the center $\PP^{12-k}$ of $\pi$ intersects the projective tangent plane $\TT_{\bar Q}(\Pi_{\mathrm{norm}})$. Recall the following   well known facts. 
\end{nota}  
\begin{lem}\label{lem:proj} $\,$
\begin{enumerate}
\item[{\rm (i)}] $\Pi_{\mathrm{norm}}$ is cut out by quadric hypersurfaces in $\PP^{13}$.
\item[{\rm (ii)}] Every projective line in $\PP^{13}$ tangent  to $\Pi_{\mathrm{norm}}$ and different from its rulings intersects $\Pi_{\mathrm{norm}}$ in a single point.
\item[{\rm (iii)}]  Every projective plane in $\PP^{13}$ tangent  to $\Pi_{\mathrm{norm}}$ intersects $\Pi_{\mathrm{norm}}$ in a single ruling. 
\item[{\rm (iv)}] Given a ruling $\bar r$ of $\Pi_{\mathrm{norm}}$, the projective planes tangent  to $\Pi_{\mathrm{norm}}$ along $\bar r$ sweep out a linear subspace $\PP^3$ of $\PP^{13}$ that contains $\bar r$. 
\item[{\rm (v)}]  Projective planes tangent to $\Pi_{\mathrm{norm}}$ sweep out a fourfold ${\rm Tan}(\Pi_{\mathrm{norm}})$ (also swept out by the one-parameter family of $ \PP ^3$'s from (iv)).
\item[{\rm (vi)}] The secant variety ${\rm Sec}(\Pi_{\mathrm{norm}})$ is a fivefold of degree $45$.
\end{enumerate}
\end{lem}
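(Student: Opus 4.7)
The six assertions are all classical facts about the rational normal surface scroll $\Pi_{\mathrm{norm}} \cong S(6,6) \subset \PP^{13}$, realised as $\PP^1 \times \PP^1$ embedded by the complete $(1,6)$-linear system. My plan is: establish (i) by invoking classical scroll theory; derive (ii) and (iii) as formal consequences of (i); verify (iv) by an explicit coordinate computation; read off (v) from (iv); and handle (vi) via the classical secant-degree formula for scrolls.

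For (i), the defining ideal of any rational normal scroll is well-known to be generated by the $2\times 2$ minors of a catalecticant-type matrix, and in particular by quadrics; see, e.g., Harris, \emph{Algebraic Geometry: A First Course}, Proposition~9.7. Specialising to $S(6,6)$ yields~(i). For (ii): if a tangent line $L$ at $P$, distinct from the ruling through $P$, met $\Pi_{\mathrm{norm}}$ at a second point, then every quadric containing $\Pi_{\mathrm{norm}}$ would cut $L$ with total multiplicity $\geq 3$ (twice at $P$ by tangency, once at the second point), forcing $L$ into every such quadric, hence into $\Pi_{\mathrm{norm}}$ by (i); however, the only lines on $S(6,6)$ are its rulings. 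Item (iii) then follows formally: any second intersection point $Q$ of $\TT_P \Pi_{\mathrm{norm}}$ with $\Pi_{\mathrm{norm}}$ generates a tangent line $PQ$ which by~(ii) must be the ruling through $P$, so $Q$ lies on that ruling.

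For (iv), I would parameterise $\Pi_{\mathrm{norm}}$ via $(s, t) \mapsto [(1, s) \otimes v(t)] \in \PP(\CC^2 \otimes \CC^7) = \PP^{13}$, where $v(t) = (1, t, t^2, \ldots, t^6)$. Fix a ruling $\bar r = \{t = t_\ast\}$; it is the projectivisation of the $2$-plane $\CC^2 \otimes v(t_\ast)$, and hence a line in $\PP^{13}$. The tangent space at $(s_\ast, t_\ast) \in \bar r$ adds the $\partial_t$-direction $(1, s_\ast) \otimes v'(t_\ast)$; as $s_\ast$ varies, this direction sweeps the $2$-plane $\CC^2 \otimes v'(t_\ast)$. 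Hence the projective tangent planes along $\bar r$ jointly span $\PP\bigl(\CC^2 \otimes \langle v(t_\ast), v'(t_\ast)\rangle\bigr) = \PP^3$, proving~(iv). Item (v) is then immediate: the rulings form a $\PP^1$, and the corresponding $\PP^3$'s sweep out a four-dimensional subvariety $\mathrm{Tan}(\Pi_{\mathrm{norm}}) \subset \PP^{13}$.

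Finally, for~(vi) I invoke the classical formula for the degree of the secant variety of a smooth rational normal scroll (cf.\ the foundational work of Severi, and modern treatments of secant varieties of scrolls by Ciliberto, Russo and others), which for $S(6,6)$ yields $45$. Alternatively, one verifies this via the double-point formula applied to a generic linear projection $\Pi_{\mathrm{norm}} \to \PP^5$, written in terms of the hyperplane class $H = (1,6)$ and the canonical class $K = (-2,-2)$ on $\PP^1 \times \PP^1$. \textbf{Main obstacle.} Items (i)--(v) are routine given the explicit scroll structure. The only numerically delicate step is (vi): the naive count $\binom{d-1}{2} = 55$ for a non-degenerate smooth surface of degree $d=12$ overshoots because of the contribution of the pencil of rulings (trisecant-like lines lying on $\Pi_{\mathrm{norm}}$), and verifying that this correction brings the secant degree down to exactly~$45$ is the genuine numerical content of the lemma.
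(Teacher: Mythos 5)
Your proposal is correct, and for items (i)--(iii) it follows essentially the same route as the paper: the paper cites Harris (\S 9.11--9.12) for the determinantal/quadratic generation of the scroll and then states that (i) implies (ii) implies (iii); your multiplicity argument (a tangent line meets every quadric through $\Pi_{\mathrm{norm}}$ with multiplicity $\ge 2$ at the point of tangency, so a second intersection point forces the line into all such quadrics, hence into the scroll, hence to be a ruling) is exactly the implication the paper leaves implicit, and your deduction of (iii) from (ii) is the intended one. For (iv)--(vi) the paper simply refers to Edge's classical treatise (Section I.51), whereas you supply explicit verifications: the coordinate computation $\PP\bigl(\CC^2\otimes\langle v(t_\ast),v'(t_\ast)\rangle\bigr)=\PP^3$ for (iv) is correct, (v) follows, and the double-point formula for a generic projection of $\PP^1\times\PP^1$ (with $H=(1,6)$, so $d=H^2=12$, $H\cdot c_1=14$, $c_1^2=8$, $c_2=4$) to $\PP^4$ gives $\tfrac12\bigl(d^2-10d+5Hc_1-c_1^2+c_2\bigr)=45$, confirming (vi). This is a legitimate, more self-contained alternative to the citation. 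One caveat on your closing remark: the figure $\binom{d-1}{2}=55$ is the apparent-double-point count for a rational \emph{curve} of degree $d$, not the relevant naive count for a surface, and the discrepancy with $45$ is not a correction coming from the rulings (which, being contained in $\Pi_{\mathrm{norm}}\subset{\rm Sec}(\Pi_{\mathrm{norm}})$, contribute nothing extra); the correct accounting is precisely the double-point formula you invoke, whose correction terms are the intrinsic invariants $H\cdot c_1$, $c_1^2$, $c_2$. Note also that $45=\binom{d-2}{2}$ here, consistent with the classical formula for the secant variety of a rational normal scroll of degree $d$.
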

\begin{proof} Statement (i) follows from \cite[9.11--9.12]{Har92} and implies (ii), which in turn implies (iii). 
See \cite[Section I.51]{Edge31} for (iv)-(vi). 
\end{proof}
\begin{proof}[\textit{Proof of Proposition \ref{prop:smoothness}}] By the universal property of normalization, the action of $\tau$ on $\Pi$ lifts to a $\tau$-action on the normalization $\Pi_{\mathrm{norm}}$, which in turn extends to the ambient space $\PP^{13}$ as a linear involution. Let $\bar E_i$ be the preimage of $E_i$ in $\Pi_{\mathrm{norm}}$ and let $\bar r$ be the preimage of a ruling $r$ of $\Pi$. The fixed point set of $\tau$ acting on $\PP^{13}$ is the union of two disjoint $\PP^6$s that are the linear spans $\langle \bar E_1\rangle$  and $\langle \bar E_2\rangle$. Any ruling $\bar r$ of $\Pi_{\mathrm{norm}}$ is $\tau$-invariant. 
The projection $\pi\colon\PP^{13}\dasharrow\PP^k$ is $\tau$-equivariant. 

If  $Q\in \Pi$ is a singular point, then $Q\notin E_1\cup E_2$, see Lemma \ref{lem:0-dim}(i). So, $\tau(Q)\neq Q$ is a second singular point of $\Pi$. The ruling $r$ of $\Pi$ that contains $Q$ is $\tau$-invariant, therefore $r=(Q\tau(Q))$. 
Let $\bar Q\in \Pi_{\mathrm{norm}}$ be the preimage of $Q$, $\bar r$ be the ruling of $\Pi_{\mathrm{norm}}$ passing through $\bar Q$ and $\tau(\bar Q)=\overline{\tau(Q)}\in\bar r$ be the preimage of $\tau(Q)$. The projective tangent planes $\TT^2_{\bar Q}\Pi_{\mathrm{norm}}$ and $\TT^2_{\tau(\bar Q)}\Pi_{\mathrm{norm}}$ intersect precisely along $\bar r$ and span a $\PP^3\subset\PP^{13}$ that contains each projective tangent plane $\TT_{\bar Q'}\Pi_{\mathrm{norm}}$ for $\bar Q'\in\bar r$, see Lemma \ref{lem:proj}(iv).  

The center $\PP^{12-k}$ of $\pi$ is $\tau$-invariant. Since $Q\in\Pi$ is a singular point,  $\PP^{12-k}$ meets $\TT^2_{\bar Q}\Pi_{\mathrm{norm}}$ at a point, say $\bar P\neq \bar Q$. Since $\tau(\bar Q)$ is singular on $\Pi$, $\PP^{12-k}$ also meets $\TT^2_{\tau(\bar Q)}\Pi_{\mathrm{norm}}$ at $\tau(\bar P)$.  The projective line
\begin{equation}\label{eq:line} l_Q=(\bar P, \tau(\bar P))=\PP^{12-k}\cap \PP^3\end{equation}
does not meet $\bar r$ and intersects each projective tangent plane $\TT_{\bar Q'}\Pi_{\mathrm{norm}}$ from the pencil of planes in $\PP^3$ with base locus $\bar r$. It follows that each point $Q'=\pi(\bar Q')\in r$ is a singular point of $\Pi$. The latter contradicts Lemma \ref{lem:0-dim}(ii).
\end{proof}
\section{Linear non-degeneracy of $\Pi$ in $\PP^{12}$}
What is left to complete the proof of Theorem \ref{thm:mthm}? We do not yet know the value of $k=\dim\langle \Pi\rangle$
and the splitting types of the normal bundles of the fixed curves $E_1$ and $E_2$.
The first is done in this section and the second in the next.

\subsection{First observations}\label{ss:1st}
 Let as before $\fg$ be the Lie algebra of $G$, $T$ be a fixed maximal torus of $G$, $\fh=\Lie(T)$ be the corresponding Cartan subalgebra of $\fg$, and $\Delta$ be the root system in $\fh^\vee$.
For a root $\boldsymbol\alpha\in \Delta$ 
let $\fg_{\boldsymbol\alpha}$ be the corresponding root subspace of $\fg$. 
Consider the Cartan decomposition 
\begin{equation*}
\fg=\fh
\oplus \bigoplus_{\boldsymbol\alpha\in\Delta^+}(\mathfrak{g}_{
\boldsymbol\alpha}
\oplus\mathfrak{g}_{-\boldsymbol\alpha}),
\end{equation*}
and let  $W=N_{G}(T)/T$  be the Weyl group of $G$. The central Weyl involution $\tau\in z(\W)\subset G$ acts on $\fh$ via $\tau|_{\fh}=-{\rm id}_{\fh}$ and acts on every plane 
$\mathfrak{g}_{\boldsymbol\alpha}\oplus\mathfrak{g}_{-\boldsymbol\alpha}$ 
by interchanging $\mathfrak{g}_{\boldsymbol\alpha}$ and $\mathfrak{g}_{-\boldsymbol\alpha}$. So $\fg$ can be decomposed into a direct sum $\fg=F^+\oplus F^-$ of eigenspaces orthogonal with respect to the Killing form, where
\begin{equation}\label{eq:dim} \tau|_{F^+}={\rm id}_{F^+},\quad \tau|_{F^-}=-{\rm id}_{F^-},\quad\dim F^+=6, \quad\dim F^-=8,\quad \fh\subset F^-.\end{equation}
 The fixed point set $(\PP\fg)^\tau\subset\PP^{13}$ consists of two disjoint subspaces 
 $\PP F^+\cong\PP^5$ and $\PP F^-\cong\PP^7$, where $\PP\fh\subset\PP F^-$. 
 
Since the torus $T$ is an abelian group, $T|_{\fh}=\id_{\fh}$ and $T|_{\PP\fh}=\id_{\PP\fh}$. The center $z(\W)=\langle\tau\rangle$ of the Weyl group also acts identically on $\PP\fh$. So, the action of the dihedral group $\W=\fD_6$ on $\fh$ descends to an effective action of the symmetric group $\Sym_3=\W/z(\W)$ on the projective line $\PP\fh\subset\PP\fg=\PP^{13}$. This line meets the sextic hypersurfaces $D_{\ell}$ and $D_{\mathrm{s}}$ along the zero-cycles $2\Orb_{\ell}$ and $2\Orb_{\mathrm{s}}$, respectively, where $\Orb_{\ell}$ and $\Orb_{\mathrm{s}}$ are $\Sym_3$-orbits of length 3, see \cite[Lemmas 3.7 and 3.8]{PZ22}. 

 Choose a point $\PP h\in\PP\fh\setminus D_\ell$. Then the hyperplane section $V=V(h)=h^\bot\cap\Omega$ is a smooth Fano--Mukai fourfold of genus 10. The group $\Aut(V)$ is reductive  and contains the torus $T$ and the involution $\tau$ normalizing $T$. Notice that $V=V(h)$ is singular exactly when $\PP h\in \Orb_{\ell}=\PP h\cap D_\ell$.
 
Let now $V$ be a smooth Fano--Mukai fourfold of genus 10 with a reductive group $\Aut(V)$ that contains the torus $T$ and the involution $\tau$ normalizing $T$. Then $V\cong V(h)$ for some $h\in\fg$ with $\PP h\in\PP\fh\setminus D_\ell$, see \cite[Corollary 3.5.1]{PZ22}. Besides, $\Aut^0(V)\cong \GL_2(\CC)$ if $\PP h\in\Orb_{\mathrm{s}}$ and 
 $\Aut^0(V)=T\cong\Gm^2$ if $\PP h\in\PP\fh\setminus (\Orb_{\mathrm{s}}\cup  \Orb_{\ell})$, 
 see  \cite[Corollary 3.8.1]{PZ22}. 
 
It is known that $V=V(h)\subset \PP^{12}$ is linearly nondegenerate. Since the involution $\tau$ leaves $\langle V(h)\rangle=h^\bot$ invariant, we have $h\in F^+\cup F^-$. Letting  
\[\Lambda^+=\PP F^+\cap h^\bot\quad\text{and}\quad \Lambda^-=\PP F^-\cap h^\bot\]
we have 
\[\Lambda^+\cap\Lambda^-=\emptyset,\quad\dim\Lambda^++\dim\Lambda^-=11\quad \quad\text{and}\quad\langle \Lambda^+\cup\Lambda^-\rangle=h^\bot\cong\PP^{12}.\]
For $h\in\fh\subset F^-$ one has $h^\bot\supset F^+$, therefore
$ \Lambda^+\cong\PP^{5}$ and $\Lambda^-\cong\PP^{6}$.

Let $\Pi\subset V=V(h)$ be the scroll in $\tau$-invariant lines. 
Then $\Pi^\tau=E^+\cup E^-$ where $E^+$ and 
$E^-$ are disjoint smooth rational sextic curves, see Theorem \ref{thm:towards-mthm}. We may consider that $E^+\subset \Lambda^+$ and 
$E^-\subset \Lambda^-$.  Indeed, if both $E^+$ and $E^-$ were contained in the same projectivized eigenspace, say 
$E^+\cup E^-\subset\Lambda^-$, then 
$\Pi\subset\langle E^+ \cup E^-\rangle\subset \Lambda^-$, and therefore
$\tau|_\Pi={\rm id}_{\Pi}$, a contradiction.  Hence 
$E^\pm=\Lambda^\pm\cap V$. Since 
$\dim \Lambda^+=5$, the sextic $E^+\subset\Lambda^+$ is not a rational normal curve and
$\Pi\subset\PP^{12}$ is not linearly normal. 
\subsection{Linear non-degeneracy of $\Pi$}
\begin{prop}\label{prop:lin-nondegen} $\,$
\begin{enumerate}
\item[{\rm (i)}] The scroll $\Pi$ in $\tau$-invariant lines is linearly nondegenerate in $\PP^{12}$.
\item[{\rm (ii)}] Up to enumeration we have $\dim \langle E_1\rangle = 5$ and $\dim \langle E_2\rangle = 6$. 
\end{enumerate}
\end{prop}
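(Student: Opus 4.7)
I will prove (i) and (ii) simultaneously, observing that they are equivalent. Each ruling of $\Pi$ is a $\tau$-invariant line meeting $E_1$ and $E_2$ in its two $\tau$-fixed points, so every ruling lies in $\langle E_1\cup E_2\rangle$. Hence $\Pi\subset \langle E_1\cup E_2\rangle$ and therefore $\langle\Pi\rangle=\langle E_1\rangle+\langle E_2\rangle$. Since $E_1\subset\Lambda^+$ and $E_2\subset\Lambda^-$ sit in disjoint projective subspaces of $\PP^{12}$, this sum is direct and
\[
\dim\langle\Pi\rangle\;=\;\dim\langle E_1\rangle+\dim\langle E_2\rangle+1\;\le\;5+6+1\;=\;12,
\]
with equality iff $\{\dim\langle E_1\rangle,\dim\langle E_2\rangle\}=\{5,6\}$. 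This shows (i) is equivalent to (ii), so it suffices to prove (i).

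The next step is to bring in the normalization picture from Theorem~\ref{thm:towards-mthm}. The embedding $\tilde\Pi\hookrightarrow\PP^{13}$ by the complete $|\mathcal O(1,6)|$ system realizes $\tilde E_1,\tilde E_2$ as rational normal sextics spanning two disjoint projective $6$-spaces $L^+,L^-\subset\PP^{13}$; by uniqueness of the $\tau$-fixed subspaces, $L^\pm$ coincide with the two $\tau$-fixed projective subspaces of the lifted $\tau$-action on $\PP^{13}$. The bijective normalization $\tilde\Pi\to\Pi\subset\PP^{12}$ is a $\tau$-equivariant linear projection $\pi\colon\PP^{13}\dashrightarrow\langle\Pi\rangle$ whose center $C$ is $\tau$-invariant and whose underlying vector subspace splits as a direct sum of its intersections with the two eigenspaces. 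Writing $c^\pm$ for those two vector-space dimensions, a standard count yields $\dim\langle E_1\rangle=6-c^+$, $\dim\langle E_2\rangle=6-c^-$, and $c^++c^-=13-\dim\langle\Pi\rangle$. So (i) amounts to proving $c^+=1$ and $c^-=0$.

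To force these minimal values I invoke the Lie-theoretic description of Subsection~\ref{ss:1st}. For $h\in\fh\setminus D_\ell$, the curve $E_1=\Omega\cap\PP F^+$ is intrinsic and $h$-independent, while $E_2=V(h)\cap\Lambda^-$ varies with $h$ through a one-parameter family. A failure of linear nondegeneracy for $E_2$ in $\Lambda^-$ would correspond, via Killing orthogonality, to a nonzero $h''\in F^-$ with $h''\notin\CC h$ and $E_2\subset \Omega\cap h^\bot\cap (h'')^\bot$; varying $h$ within $\fh\setminus D_\ell$ should over-determine this containment and force $h''\in\CC h$, a contradiction. An analogous argument using the intrinsic character of $E_1$ would give $c^+=1$.

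The main obstacle is to make these over-determination arguments rigorous, which requires careful Killing-form bookkeeping on $F^\pm$ and an analysis of how $E_2$ deforms as $h\in\fh$ varies. If the direct route is intractable, the fallback plan is to specialize to the case $\Aut^0(V)\cong\GL(2,\CC)$, where $V$ is unique up to isomorphism and carries a rigid $\GL_2$-equivariant structure; there the span statements can be verified by a representation-theoretic computation, and then extended to every $V$ by the family-specialization technique already used in Propositions~\ref{prop:GL2}--\ref{prop:Ga-Gm}.
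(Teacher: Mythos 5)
Your reduction of (i) to (ii) and your projection bookkeeping are sound and essentially reproduce the paper's Lemma~\ref{lem:join} and Corollary~\ref{cor:lin-nondegen}: since every ruling joins a point of $E_1$ to a point of $E_2$ and the two curves lie in disjoint $\tau$-fixed linear subspaces, one indeed gets $\dim\langle\Pi\rangle=\dim\langle E_1\rangle+\dim\langle E_2\rangle+1$, so everything hinges on the lower bounds $\dim\langle E_i\rangle\ge 5$ (and on excluding $\dim\langle\Pi\rangle=11$). But that is precisely where your argument stops being a proof. The ``over-determination'' step is only a hope: for each $h$ the putative degenerating hyperplane $h''$ may depend on $h$, and varying $h$ over $\PP\fh\setminus D_\ell$ produces a varying family of curves $E_2(h)$ with no visible constraint forcing $h''\in\CC h$. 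Worse, for $E_1=\Omega\cap\PP F^+$ there is nothing to vary at all (the curve is independent of $h$), so ``an analogous argument'' cannot even be formulated; you would have to verify directly that $\Omega\cap\PP F^+$ spans $\PP F^+$, which you do not do. The fallback (a representation-theoretic computation in the $\GL_2$ case plus semicontinuity) is likewise only announced, not carried out, and it does not by itself cover the $\Ga\times\Gm$ case, where $h\notin\fh$.

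For comparison, the paper closes the gap by purely projective means, with two ideas absent from your proposal. First (Lemma~\ref{lem:11}), each $E_i$ is a smooth sextic cut out by quadrics in its span (being $\langle E_i\rangle\cap V$ with $V$ an intersection of quadrics); a span of dimension $3$ is impossible by a degree count, and a span of dimension $4$ is excluded by a liaison argument: the residual curve of $E_i$ in a complete intersection of three quadrics in $\PP^4$ would be a pair of lines, and Hironaka's arithmetic-genus formula \eqref{eq:Hironaka} shows no configuration achieves the required $\deg(\Lambda(\Gamma))=7$. This gives $\dim\langle\Pi\rangle\ge 11$. Second, the case $\dim\langle\Pi\rangle=11$ is ruled out because $\Pi$ would then sit in a hyperplane section $M$ of $V$ with $\Pic(M)\cong\ZZ$ by Lefschetz, forcing $\Pi$ to be a complete intersection in $M$, which is impossible since $\deg(\Pi)=12$ does not divide $\deg(M)=18$. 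You would need to supply arguments of comparable substance for your plan to become a proof.
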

We start with the following lemma.
\begin{lem}\label{lem:join} Let $k=\dim\langle\Pi\rangle$ and let $\pi\colon\PP^{13}\dasharrow\PP^k=\langle\Pi\rangle$ be the linear projection  with center  
$Z=\PP^{12-k}$ which sends the rational normal scroll $\Pi_{\mathrm{norm}}$ onto $\Pi$ and sends $\bar E_i$ onto $E_i$, $i=1,2$. Let $H_i=\langle\bar E_i\rangle\subset\PP^{13}$ and $Z_i=Z\cap H_i$, $i=1,2$. Then either
\begin{itemize}\item $Z=Z_i$ for some $i\in\{1,2\}$, or
\item $Z$ coincides with the join $J(Z_1, Z_2)=\langle Z_1,Z_2\rangle$.
\end{itemize}
In the latter case $\dim(Z)=12-k=\dim(Z_1)+\dim(Z_2)+1$. 
\end{lem}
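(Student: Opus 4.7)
The plan is to exploit two facts: first, the projection $\pi$ is $\tau$-equivariant, hence its center $Z$ is a $\tau$-invariant linear subspace of $\PP^{13}$; and second, every $\tau$-invariant linear subspace of $\PP^{13}$ decomposes as a join of its intersections with the two disjoint fixed loci $H_1$ and $H_2$ of $\tau$. To verify the $\tau$-equivariance, recall from the proof of Proposition \ref{prop:smoothness} that $\tau$ acts on $\Pi_{\mathrm{norm}}$ and extends linearly to the ambient $\PP^{13}$. The map $\pi$ is cut out by the linear system of hyperplanes in $\PP^{13}$ that contain $\Pi_{\mathrm{norm}}$; since $\tau$ preserves $\Pi_{\mathrm{norm}}$, this linear system is $\tau$-invariant, and therefore so is its base locus $Z$.

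Next I would carry out the eigenspace decomposition. The curves $\bar E_1$ and $\bar E_2$ are pointwise fixed by $\tau$, so the $\PP^6$'s $H_1$ and $H_2$ lie in the $\tau$-fixed locus of $\PP^{13}$. Since $H_1\cap H_2 = \emptyset$ (any two distinct constant sections of ${\rm pr}_1$ embed into sextics spanning disjoint $\PP^6$'s, see the Notation preceding Lemma \ref{lem:proj}) and $\dim H_1 + \dim H_2 + 1 = 13$, lifting to $\CC^{14}$ yields the eigenspace decomposition $\CC^{14} = F_1 \oplus F_2$ with $\PP F_i = H_i$, where (up to enumeration) $F_1$ is the $+1$-eigenspace and $F_2$ is the $-1$-eigenspace of $\tau$. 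Letting $\tilde Z \subset \CC^{14}$ lift $Z$, the $\tau$-invariance of $\tilde Z$ together with the identity $2v = (v + \tau v) + (v - \tau v)$ gives
\[ \tilde Z = (\tilde Z\cap F_1)\oplus(\tilde Z\cap F_2) = \tilde Z_1\oplus\tilde Z_2, \]
where $\tilde Z_i$ is the lift of $Z_i = Z\cap H_i$. If $\tilde Z_j = 0$ for some $j$, then $\tilde Z\subseteq F_i$ with $i\neq j$, whence $Z\subseteq H_i$ and $Z = Z_i$; otherwise both summands are nonzero, so $Z = \PP(\tilde Z_1\oplus\tilde Z_2) = J(Z_1,Z_2)$, and the direct-sum equality immediately gives $\dim Z = \dim \tilde Z_1 + \dim \tilde Z_2 - 1 = \dim Z_1 + \dim Z_2 + 1$.

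The argument is essentially formal, and I do not anticipate a serious obstacle. The only point that warrants care is the $\tau$-equivariance of $\pi$, which rests on $\pi$ being canonically attached to $\Pi_{\mathrm{norm}}$ via its linear system of hyperplane sections; once this is in hand, the rest is a standard eigenspace computation for a linear involution with two complementary eigenspaces.
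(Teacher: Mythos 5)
Your proof is correct and is essentially the paper's own argument: both hinge on the $\tau$-invariance of $Z$ and on $H_1\sqcup H_2$ being the full fixed locus of the linear involution on $\PP^{13}$, and the paper's projective phrasing --- for $P\in Z\setminus(H_1\cup H_2)$ the $\tau$-invariant line $(P\,\tau(P))$ lies in the linear space $Z$ and meets $H_1,H_2$ in points of $Z_1,Z_2$, whence $Z\subseteq J(Z_1,Z_2)$ --- is exactly your eigenspace splitting $\tilde Z=(\tilde Z\cap F_1)\oplus(\tilde Z\cap F_2)$ read on the affine cone. The one slip is the phrase ``the linear system of hyperplanes in $\PP^{13}$ that contain $\Pi_{\mathrm{norm}}$'': no hyperplane contains $\Pi_{\mathrm{norm}}$, which is linearly nondegenerate in $\PP^{13}$; the center $Z$ is the base locus of the subsystem of hyperplanes pulled back from $\langle\Pi\rangle$, and the $\tau$-invariance of $Z$ that you actually need is already recorded in the proof of Proposition~\ref{prop:smoothness}.
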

\begin{proof} 
We have $H_i\cong\PP^6$, $H_1\cap H_2=\emptyset$ and $H_1\cup H_2$ is the fixed point set $(\PP^{13})^{\tau}$.  Recall that through any point of $\PP^{13}\setminus (H_1\cup H_2)$ passes a unique line meeting both $H_1$ and $H_2$. 

Assume that $Z\neq Z_i$ for $i=1,2$ and let $P\in Z\setminus (H_1\cup H_2)$.  Since $Z$ is $\tau$-invariant, the $\tau$-invariant line $l_P=(P\tau(P))$ is contained in $Z$ and $P_i=l_P\cap H_i\in Z_i$, $i=1,2$, are the two fixed points of $\tau$ on $l_P$.
Thus, $P\in l_P=(P_1P_2)\subset J(Z_1,Z_2)$, and so $Z\subset  J(Z_1,Z_2)$.   The opposite inclusion is evidently true.
\end{proof}
\begin{cor}\label{cor:lin-nondegen}
The scroll $\Pi$ is linearly nondegenerate if and only if $Z$ is a point in $H_i\setminus {\rm Sec}(\bar E_i)$ for some $i\in\{1,2\}$. 
In the latter case 
\[\dim\,\langle E_i\rangle=5\quad\text{and}\quad\dim\,\langle E_j\rangle=6\quad\text{for}\quad j\neq i.\]
\end{cor}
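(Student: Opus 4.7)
The plan is to deduce the Corollary directly from Lemma \ref{lem:join} combined with the fact, established in Propositions \ref{prop:bijection}(ii) and \ref{prop:smoothness}, that $\pi|_{\Pi_{\mathrm{norm}}}\colon \Pi_{\mathrm{norm}}\to\Pi$ is a bijective morphism between smooth surfaces, hence an isomorphism.

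First I would unfold the trivial chain $\Pi \text{ linearly nondegenerate} \iff k = 12 \iff \dim Z = 12 - k = 0 \iff Z \text{ is a point}$. This reduces the problem to pinning down where a single-point $Z$ can sit. Applying Lemma \ref{lem:join} in the case $\dim Z = 0$, I rule out the ``join'' option: if $Z = J(Z_1, Z_2)$ with $\dim Z_1 + \dim Z_2 + 1 = 0$, then one of the $Z_i$ is empty and the other is a point, so in fact $Z = Z_i \subset H_i$ anyway. This gives $Z = \{P\}$ with $P \in H_i$ for some $i \in \{1,2\}$.

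Next I would verify $P \notin \mathrm{Sec}(\bar E_i)$. Since $\pi|_{\Pi_{\mathrm{norm}}}$ is an isomorphism onto $\Pi$, its restriction $\pi|_{\bar E_i}\colon \bar E_i \to E_i$ is a closed embedding of smooth rational curves. Geometrically, the linear projection from $P \in H_i \cong \PP^6$ restricted to the rational normal sextic $\bar E_i \subset H_i$ must separate points and tangent directions, which is precisely the condition $P \notin \mathrm{Sec}(\bar E_i)$ (where $\mathrm{Sec}$ includes tangent lines as limits of proper secants, so as to cover both injectivity and the condition on the differential). Conversely, if $Z = \{P\} \in H_i \setminus \mathrm{Sec}(\bar E_i)$, then $\dim Z = 0$, so $k = 12$ and $\Pi$ is linearly nondegenerate; this gives the converse direction of the equivalence.

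For the dimension count, I would note that $\pi|_{H_i}\colon H_i \dashrightarrow \pi(H_i)$ is a linear projection of $\PP^6$ from the point $P \in H_i$, hence $\pi(H_i) \cong \PP^5$; since the rational normal sextic $\bar E_i$ spans $H_i$, its image $E_i$ spans this $\PP^5$. On the other hand, $P \notin H_j$ (because $H_1 \cap H_2 = \emptyset$), so $\pi|_{H_j}$ is a linear embedding of $H_j \cong \PP^6$ into $\PP^{12}$, and $E_j = \pi(\bar E_j)$ spans a $\PP^6$. This yields $\dim \langle E_i \rangle = 5$ and $\dim \langle E_j \rangle = 6$. I expect no genuine obstacle: the only subtle point is the correct interpretation of $\mathrm{Sec}(\bar E_i)$ as including tangent lines, which is the standard convention needed for the linear projection to restrict to a closed embedding of the curve.
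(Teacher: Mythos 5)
Your proposal is correct and follows essentially the same route as the paper, which simply states that the corollary ``follows immediately from Lemma~\ref{lem:join}''; you have filled in exactly the intended details (the degeneration of the join case when $\dim Z=0$, the equivalence of smoothness of $E_i=\pi(\bar E_i)$ with $Z\notin{\rm Sec}(\bar E_i)$ in the convention where the secant variety contains the tangent lines, and the dimension count $\pi(H_i)\cong\PP^5$, $\pi(H_j)\cong\PP^6$). No gaps.
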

\begin{proof}  
This follows immediately from Lemma \ref{lem:join}. 
\end{proof}
 Let us recall the notation and some formulas from \cite{Hir57}.
\begin{nota}\label{nota:Hironaka} 
Let $C_1$ and $C_2$ be curves in $\PP^n$ with no common component. For a point $P\in C_1\cap C_2$ let $I(C_i)$ be the ideal of $C_i$ in the local ring $A$ of $(\PP^n,P)$. Then the intersection index of $C_1$ and $C_2$ at $P$ is defined as
\[i(C_1,C_2;P)={\rm length}\,A/(I(C_1),I(C_2))A.\]
It is known that $i(C_1,C_2;P)=1$ if and only if the Zariski tangent spaces of $C_1$ and $C_2$ at $P$ intersect only in $P$, see \cite[Proposition 3]{Hir57}. 

Let now $C_1=C_1'\cup C_1''$ where $C_1'$, $C_1''$ and  $C_2$ are smooth at $P$. If $C_1$ lies on  a smooth surface $S$ and $C_2$ is transversal to $S$ at $P$ then $i(C_1,C_2;P)=1$. If $C_1'$ is a simple tangent line to $C_2$ at $P$ and $C_1''$ is transversal to $C_2$ at $P$ then $i(C_1,C_2;P)=3$. If, finally, $C_1'$ is a simple tangent line to $C_2$ at $P$ and $C_1''=C_1'$ then $i(C_1,C_2;P)=2i(C_1',C_2;P)=4$. 

Given $r$ curves $C_1,\ldots,C_r$ with no common component, one considers the following zero-cycle supported at the singular points of $\cC=\bigcup_{i=1}^r C_i$:
\[\Lambda (\cC)=\sum_{P\in {\rm sing}(\cC)} i(\Lambda(\cC), P)P\quad\text{where}\quad  i(\Lambda(\cC), P)=
\sum_{k=2}^r i\left(\left(\sum_{i=1}^{k-1} C_i\right), C_k; P\right).\]
In fact, this cycle does not depend on the choice of enumeration.
The arithmetic genus $p_a(\cC)$ can be computed by the following formula, 
see \cite[Theorem 3]{Hir57}:
\begin{equation}\label{eq:Hironaka}
p_a(\cC)=\sum_{i=1}^r p_a(C_i)+\deg (\Lambda(\cC))-(r-1).
\end{equation}
\end{nota}
\begin{lem}\label{lem:11} $\,$
\begin{enumerate}
\item[{\rm (i)}]
 For $i=1,2$ the curve $E_i$ is intersection of quadrics in $\langle E_i\rangle$. 
\item[{\rm (ii)}] We have $k\ge 11$. 
\end{enumerate}
\end{lem}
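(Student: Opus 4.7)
My plan is to establish (i) by identifying $E_i$ with a rational normal sextic via the normalization isomorphism, and then to derive (ii) from (i) using the linear action of $\tau$ on $\langle\Pi\rangle$.

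For (i), recall that $\pi|_{\tilde\Pi}:\tilde\Pi\to\Pi$ is bijective by Proposition~\ref{prop:bijection}(ii) and $\Pi$ is smooth by Proposition~\ref{prop:smoothness}, so this is an isomorphism. Under the identification $\tilde\Pi=\Pi_{\mathrm{norm}}\cong\PP^1\times\PP^1$, the preimage $\bar E_i$ is a constant section of ${\rm pr}_1$, hence a rational normal sextic in $H_i=\langle\bar E_i\rangle\cong\PP^6$, classically cut out by $\binom{6}{2}=15$ quadrics. The isomorphism $\pi|_{\bar E_i}:\bar E_i\to E_i$ makes $E_i$ a smooth rational sextic in $\langle E_i\rangle\cong\PP^{d_i}$. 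I would then apply the Gruson--Lazarsfeld--Peskine regularity theorem: a smooth nondegenerate curve of degree $d$ in $\PP^r$ is $(d-r+1)$-regular, hence cut out by forms of degree at most $d-r+1$. For $d=6$ and $d_i\ge 5$ this yields $2$-regularity and thus cut-out by quadrics. It remains to rule out $d_i\le 4$: in $\PP^3$ a smooth sextic can lie on at most one quadric (two quadrics meet in a degree-$4$ curve by Bezout), and in $\PP^4$ generically on only two quadrics whose intersection is a surface of degree $4$, neither case permitting the sextic to be scheme-theoretically cut out by quadrics.

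For (ii), the involution $\tau$ extends linearly to $\langle\Pi\rangle=\PP^k$, and the fixed locus of this linear involution is a disjoint union $\PP V^+\sqcup\PP V^-$ of projective eigenspaces. The connected components $E_1,E_2$ of $\Pi^\tau$ lie in distinct eigenspace projectivizations, giving $\langle E_1\rangle\cap\langle E_2\rangle=\emptyset$. Combined with the bound $\dim\langle E_i\rangle\ge 5$ supplied by (i), this yields
\[
k \;\ge\; \dim\langle E_1\rangle + \dim\langle E_2\rangle + 1 \;\ge\; 5+5+1 \;=\; 11.
\]

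The main obstacle will be making the exclusion of $d_i\in\{3,4\}$ in (i) fully rigorous, since special embeddings could in principle place $E_i$ on more quadrics than the generic count predicts. A cleaner alternative route would be to show that $E_i=\Pi\cap\langle E_i\rangle$ scheme-theoretically, via a $\tau$-invariance argument (any non-fixed point of $\Pi\cap\langle E_i\rangle$ together with its $\tau$-image would generate a $\tau$-invariant line in $\Pi$ with fixed points in both $\langle E_1\rangle$ and $\langle E_2\rangle$, contradicting their disjointness), and then to deduce (i) for $E_i$ from the cut-out-by-quadrics property of the smooth scroll $\Pi\cong\Pi_{\mathrm{norm}}$ furnished by Lemma~\ref{lem:proj}(i).
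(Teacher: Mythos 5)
Your proof of (i) does not go through. The Gruson--Lazarsfeld--Peskine theorem gives $(d-r+2)$-regularity for an irreducible nondegenerate curve of degree $d$ in $\PP^r$, not $(d-r+1)$; for a sextic in $\PP^5$ this is $3$-regularity, i.e.\ cut out by cubics, which is not enough. In fact $2$-regularity is impossible here: $2$-regularity of $\mathcal{I}_{E_i}$ would force $h^1(\mathcal{I}_{E_i}(1))=0$, i.e.\ linear normality, whereas a rational sextic spanning only a $\PP^5$ has $h^0(\cO_{E_i}(1))=7>6$. So being an intersection of quadrics is not a general property of smooth sextics in $\PP^5$ and must come from the ambient geometry. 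The paper's proof of (i) is exactly such a one-liner: by Subsection 6.1 one has $E_i=\Lambda^{\pm}\cap V=\langle E_i\rangle\cap V$, and $V$ (a hyperplane section of $\Omega$) is an intersection of quadrics in $\PP^{12}$, so its linear section $E_i$ is an intersection of quadrics in $\langle E_i\rangle$. Your ``cleaner alternative'' has the same defect at its last step: Lemma~\ref{lem:proj}(i) concerns $\Pi_{\mathrm{norm}}\subset\PP^{13}$, and an isomorphic linear projection of a variety cut out by quadrics need not itself be cut out by quadrics.

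There is also a circularity in how you treat $\dim\langle E_i\rangle\le 4$, which then undermines (ii). Showing that a smooth sextic in $\PP^3$ or $\PP^4$ \emph{cannot} be an intersection of quadrics does not rule those cases out unless you already know that $E_i$ \emph{is} an intersection of quadrics --- which is what you are trying to prove. In the paper, (i) is established independently of $\dim\langle E_i\rangle$, and only then is it used to exclude $\dim\langle E_i\rangle\in\{3,4\}$; the $\PP^4$ case is genuinely delicate, since there $E_i$ lies on three quadrics whose proper intersection is a degree-$8$, arithmetic-genus-$5$ curve $E_i+l+l'$, and one needs Hironaka's formula~\eqref{eq:Hironaka} to check that no configuration of the residual lines yields the required $\deg\Lambda(\Gamma)=7$. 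Your closing inequality $k\ge\dim\langle E_1\rangle+\dim\langle E_2\rangle+1$ is the same join/disjoint-eigenspace step as Lemma~\ref{lem:join}, but its input $\dim\langle E_i\rangle\ge 5$ is precisely what your argument fails to supply.
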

\begin{proof}  (i) It is known that $\Omega\subset \PP^{13}$ and $V=\Omega\cap h^\bot$ in $\PP^{12}$ are intersections of quadrics, see \cite[Lemma 2.10]{Isk77}, cf. also \cite[4.5]{PZ18}. Then also 
$E_i=E^\pm=\Lambda^\pm\cap V=\langle E_i\rangle\cap V$ is intersection of quadrics in $\langle E_i\rangle$, $i=1,2$, see Subsection \ref{ss:1st}.

(ii) Fix $i\in\{1,2\}$ and let $E=E_i$, $\bar E=\bar E_i$ and $\mathcal{Z}=Z_i\subset\langle \bar E\rangle=\PP^6$. 
Since the curve $E=\pi(\bar E)$ is smooth, the center $\mathcal{Z}$ of the projection $\pi: \PP^6\dasharrow\langle E\rangle\subset\PP^{12}$ does not intersect the threefold ${\rm Sec}(\bar E)$ in $\PP^6$. Hence $\dim \mathcal{Z}\le 2$. 

If $\dim \mathcal{Z}=2$
then $\langle E\rangle=\PP^3$, and therefore $E$ is intersection of quadrics in $\PP^3$, see (i). Thus,  $E\subset Q_1\cap Q_2$ for some distinct quadrics $Q_1$ and $Q_2$  in $\PP^3$. Hence
$6=\deg(E)\le 4$, a contradiction. 
It follows that $\dim  \mathcal{Z} \le 1$. 

If $\dim  \mathcal{Z}=1$ then $\langle E\rangle=\pi(\langle\bar E\rangle)=\PP^4$. By (i), $E$ is contained in a proper intersection of  quadrics $Q_1, Q_2$ and $Q_3$ in $\PP^4$. This intersection is a curve $\Gamma$ of degree $8$ and of arithmetic genus $5$. It consists of the smooth sextic $E$ and a conic $C_0$. We claim that $C_0$ is reducible. 
Indeed, let $L=\PP^3$ be a hyperplane in $\PP^4$ that contains $C_0$. Then the surface $L\cap {\rm Sec}(E)$ in $L=\PP^3$ meets $C_0$. 
Hence, there is a secant line $l$ of $E$ which intersects $C_0$, and therefore $l\cdot Q_i\ge 3$ for $i=1,2,3$. 
Thus, $l\subset\bigcap_{i=1}^3 Q_i$
 is a component of $C_0=l+l'$ where $l'$ is a line. 
 
For $\Gamma=E+l+l'$  we obtain by \eqref{eq:Hironaka}
 \[5=\pi_a(\Gamma)=\deg(\Lambda(\Gamma))-2.\]
So, $\deg(\Lambda(\Gamma))=7$. 
However, we claim that $\deg(\Lambda(\Gamma))<7$ whatever is the configuration $(E, l, l')$. 

Indeed, $E$ being a smooth intersection of quadrics, the intersection of $E$ 
 with any line is either transversal or a simple tangency at a single point. Therefore, $\sum_{P\in E} i(E,l_i;P)\le 2$. 
 Assume first that $l_1\neq l_2$. If $\Gamma$ has only nodes as singularities then the number of these points is $\le 5$, 
 and also $\deg(\Lambda(\Gamma))\le 5$, a contradiction. 
 
 Suppose now that $\Gamma$ has a triple point $P$. Then either $l_1, l_2$ and $E$ are transversal at $P$ and then 
 \[i(\Lambda(\Gamma), P)=i(l_1,l_2;P)+i(l_1+l_2, E;P)=2,\]
 or at most one of the $l_i$, say $l_1$ is a simple tangent to $E$ at $P$ and $l_2$ is transversal to $l_1+E$ at $P$, hence
 \[i(\Lambda(\Gamma), P)=i(E,l_1;P)+i(E+l_1,l_2;P)=3.\]
 It is easily seen that in any case $\deg(\Lambda(\Gamma))\le 5$. This leads again to a contradiction. 
 
Suppose finally that $l_1=l_2$ is a double line of the intersection $Q_1\cdot Q_2\cdot Q_3$. If this double line meets $E$ transversally in two points $P_1$ and $P_2$, then  $\deg(\Lambda(\Gamma))=2(i(l_1,E;P_1)+i(l_1,E;P_2)=4$. If $l_1$ and $E$ are tangent at a point $P$ then 
 again $\deg(\Lambda(\Gamma))=2i(l_1,E;P)=4$. In all the other cases $\deg(\Lambda(\Gamma))<4$, which proves our claim and excludes the case $\dim  \mathcal{Z}=1$. 

Thus, $\dim Z_i\le 0$ for $i=1,2$, hence  $12-k=\dim Z\le 1$ and $k\ge 11$.
\end{proof}
\begin{proof}[Proof of Proposition \ref{prop:lin-nondegen}]
Statement (ii) follows from (i) due to Corollary \ref{cor:lin-nondegen}. To show (i) suppose that $\Pi$ is linearly degenerate. By Lemma \ref{lem:11} one has $\langle\Pi\rangle=\PP^{11}$, and therefore $\Pi$ is contained in the hyperplane section $M=\langle\Pi\rangle\cap V$.
By the Lefschetz hyperplane theorem, $\Pic(M)\cong\Pic(V)=\ZZ$, see e.g. \cite[Section 1]{Fuj80}. Hence $\Pi$ is a complete intersection in $M$. However, $\deg(\Pi)=12$ does not divide $\deg(M)=18$, a contradiction. 
\end{proof}
\section{Normal bundles of the fixed curves}
In the notation of Subsection \ref{ss:1st} for  the components $E^\pm$ of the fixed point set $V^\tau$ we have $\langle E^+\rangle =5$ and $\langle E^-\rangle =6$, see Proposition \ref{prop:lin-nondegen}.
In this section we determine the splitting type of the normal bundles $N^-=N_{E^-/\PP^6}$ and $N^+=N_{E^+/\PP^5}$. 
We show that the first normal bundle is almost balanced and the second is balanced. More precisely, we have the following
\begin{prop}\label{prop:norm-bdl} The normal bundles $N^\pm$ admit decompositions 
\[N^-=\cO_{\PP^1}(8)^{\oplus 5}\quad\text{and}\quad N^+=\cO_{\PP^1}(8)^{\oplus 2}\oplus\cO_{\PP^1}(9)^{\oplus 2}.\]
\end{prop}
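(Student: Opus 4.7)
The plan is to prove the two statements separately, the first by a classical calculation and the second by reducing to it via the linear projection $\pi\colon\PP^{13}\dashrightarrow\PP^{12}$ of Section~5, then analysing a resulting $2$-term resolution of $N^+$ on $\PP^1$.

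For $N^-$: Since $\deg E^-=6=\dim\Lambda^-$ and $E^-$ spans $\Lambda^-=\PP^6$, the curve $E^-$ is a rational normal sextic. The normal bundle of a rational normal curve of degree $n$ in $\PP^n$ is balanced, $\cO_{\PP^1}(n+2)^{\oplus (n-1)}$; this is immediate from the Euler sequence pullback $0\to\cO\to\cO(n)^{\oplus(n+1)}\to T_{\PP^n}|_{\PP^1}\to 0$, which yields $T_{\PP^n}|_{\PP^1}=\cO(n+1)^{\oplus n}$, hence $N_{\PP^1/\PP^n}=\cO(n+2)^{\oplus(n-1)}$ after dividing out $T_{\PP^1}=\cO(2)$. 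Specialising to $n=6$ gives $N^-=\cO_{\PP^1}(8)^{\oplus 5}$.

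For $N^+$: I would exploit the fact, established in Section~5 and Corollary~6.2, that the normalisation morphism $\tilde\Pi\to\Pi$ is the restriction of a linear projection $\pi\colon\PP^{13}\dashrightarrow\PP^{12}$ from a point $Z\in H^+=\langle\tilde E^+\rangle\cong\PP^6$, and that $\tilde E^+\subset H^+$ is a rational normal sextic that maps isomorphically onto $E^+\subset\Lambda^+=\PP^5$. The kernel of $d\pi$ restricted to $\tilde E^+$ is a line bundle $T_\pi|_{\tilde E^+}$, and one has the short exact sequence
\[
0\to T_\pi|_{\tilde E^+}\to T_{\PP^6}|_{\tilde E^+}\to T_{\PP^5}|_{E^+}\to 0.
\]
Since $T_{\PP^6}|_{\tilde E^+}=\cO(7)^{\oplus 6}$ (by the RNC computation above) and $\deg T_{\PP^5}|_{E^+}=36$ (from Euler with $d=6$, $n=5$), comparing degrees gives $T_\pi|_{\tilde E^+}=\cO_{\PP^1}(6)$. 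A snake-lemma diagram of the two tangent/normal bundle sequences — noting that the vertical map on tangents is an isomorphism because $\pi|_{\tilde E^+}$ is an isomorphism — then produces
\[
0\to\cO_{\PP^1}(6)\to\cO_{\PP^1}(8)^{\oplus 5}\to N^+\to 0,
\]
so $N^+$ has rank $4$ and degree $34$.

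The final step is to identify the splitting type of $N^+$. The inclusion $\cO(6)\hookrightarrow\cO(8)^{\oplus 5}$ is given by a $5$-tuple of sections of $\cO_{\PP^1}(2)$, i.e.\ by quadratic forms $(q_1,\dots,q_5)$ in $H^0(\cO(2))$ whose vanishing would correspond, geometrically, to $Z$ lying on the tangent line to $\tilde E^+$ at the vanishing point — excluded since $Z\notin\operatorname{Sec}(\tilde E^+)$. A standard cohomological computation with the twisted sequences $0\to\cO(-3)\to\cO(-1)^{\oplus 5}\to N^+(-9)\to 0$ and $0\to\cO(-4)\to\cO(-2)^{\oplus 5}\to N^+(-10)\to 0$ shows $h^0(N^+(-9))=2$ unconditionally and $h^0(N^+(-10))=3-\dim\langle q_1,\dots,q_5\rangle$; consequently the only splittings of $N^+$ compatible with these invariants are the balanced $\cO(8)^{\oplus 2}\oplus\cO(9)^{\oplus 2}$ (when $\langle q_i\rangle=H^0(\cO(2))$) and the unbalanced $\cO(10)\oplus\cO(8)^{\oplus 3}$ (when $\dim\langle q_i\rangle=2$). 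The main obstacle is therefore to rule out the unbalanced case, i.e.\ to show that the $q_i$ span the full $3$-dimensional space $H^0(\cO(2))$. I expect this to follow from the intrinsic, Mukai-theoretic description $E^+=\Omega\cap\PP F^+$, which makes $(\PP F^+,E^+)$ independent of the choice of $h\in\fh$; one may then verify the spanning condition for a single convenient representative (for instance the $\GL(2,\CC)$-symmetric $V$ of Subsection~\ref{ss:GL2}, where the centraliser of $\tau$ in $\Aut^0(V)$ acts on $E^+$) and conclude by this intrinsicness.
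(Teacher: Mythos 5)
Your computation of $N^-$ is correct and coincides with the paper's: $E^-$ has degree $6$ and spans $\Lambda^-\cong\PP^6$, hence is a rational normal sextic, whose normal bundle is the balanced $\cO_{\PP^1}(8)^{\oplus 5}$.

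For $N^+$ the reduction is sound as far as it goes: the sequence $0\to\cO_{\PP^1}(6)\to\cO_{\PP^1}(8)^{\oplus 5}\to N^+\to 0$ obtained from the projection with centre $Z\in\langle \bar E^+\rangle$ is correct; the only splitting types compatible with rank $4$, degree $34$ and all summands $\ge 8$ are $(8,8,9,9)$ and $(8,8,8,10)$; and your cohomological criterion $h^0(N^+(-10))=3-\dim\langle q_1,\dots,q_5\rangle$ correctly identifies the balanced case with the condition that the five quadratic forms span $H^0(\cO_{\PP^1}(2))$. The gap is that this spanning condition is never proved. It does not follow from what you actually use: the hypothesis $Z\notin\operatorname{Sec}(\bar E^+)$ enters your argument only to ensure the $q_i$ have no common zero (local freeness of the quotient), which is strictly weaker — the unbalanced type $\cO(8)^{\oplus 3}\oplus\cO(10)$ genuinely occurs for immersed sextics projected from a point of $\operatorname{Sec}(C_6)\setminus C_6$ (the paper's Example \ref{ex:monomial}), so one must really connect $\dim\langle q_i\rangle=3$ with $Z$ avoiding all secant and tangent lines. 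That implication is exactly the content of the theorem of Bernardi that the paper invokes (Theorem \ref{thm:Bernardi}: the projected curve is smooth if and only if the normal bundle is almost balanced), and you are in effect attempting to reprove it. Your fallback — that $E^+=\PP F^+\cap\Omega$ is independent of $h$ and all central Weyl involutions are $G$-conjugate, so one representative suffices — is a legitimate and even attractive observation, but the verification for that one representative is not carried out either; you would still have to compute the $q_i$ explicitly, or the weights of the relevant one-parameter subgroup on $N^+$ at its fixed points. As written, the decisive step for $N^+$ is asserted, not proved.
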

Actually, the first equality is well known. Indeed, let $\cC\subset \PP^n$ be a linearly nondegenerate smooth rational curve of degree $d\ge n$.
Then 
\[N_{\cC/\PP^n}=\bigoplus_{i=1}^{n-1}\cO_{\PP^1}(d+b_i)\quad\text{where}\quad b_i\ge 2\quad\text{and}\quad \sum_{i=1}^{n-1} b_i=2d-2,\]
see e.g. \cite{Sac80}, \cite[Corollary 2.2]{CR18}. In particular, for the rational normal curve $\cC=C_n$ of degree $n$ in $\PP^n$ (i.e. for $d=n$) one has $b_1=\ldots=b_{n-1}=2$. So, the normal bundle is balanced, that is,
\[N_{C_n/\PP^n}=\cO_{\PP^1}(n+2)^{\oplus\, n-1}.\]
For $n=6$ this yields the first equality in Proposition \ref{prop:norm-bdl}. 

As for the second, notice that 
the preceding formulas leaves just the following possibilities (a) and (b), depending on the position of the center $\cZ$ of the linear projection $\pi\colon\PP^6\dasharrow\PP^5$ which sends a rational normal curve $C_6$ to $E^+$:
\begin{equation}\label{eq:split} \text{(a)}\quad N^+=\cO_{\PP^1}(8)^{\oplus 2}\oplus\cO_{\PP^1}(9)^{\oplus 2}\quad\text{and}\quad\text{(b)}\quad N^+=\cO_{\PP^1}(8)^{\oplus 3}\oplus\cO_{\PP^1}(10).\end{equation}
A priori, both of these splitting types could occur, due to the following facts. 
\begin{thm}\label{thm:Sacchiero} $\,$
\begin{enumerate}
\item[{\rm (i)}] {\rm (\cite{Sac80}; see also \cite[Theorem 2.7]{CR18})}
For any sequence of integers $b_i \ge 2$, $1 \le i \le n-1$ such that $\sum_{i=1}^{n-1} b_i = 2d-2$ 
there exists an
unramified map $f \colon \PP^1 \to \PP^n$ onto a linearly nondegenerate, immersed curve $\cC\subset\PP^n$ 
of degree $d\ge n$ 
such that \[N_f=\bigoplus_{i=1}^{n-1} \cO_{\PP^1}(d+b_i).\] 
\item[{\rm (ii)}] {\rm (\cite{Sac80}; see also \cite[Theorem 2]{AR17a})} 
For a generic rational curve $\cC$ in $\PP^n$ of degree $d>n\ge 3$ one has
\[N_f=\cO_{\PP^1}(d+q+1)^{\oplus n-r-1}\oplus \cO_{\PP^1}(d+q+2)^{\oplus r}\] 
where $2d-n-1=q(n-1)+r$ with $r<n-1$. 
\end{enumerate}
\end{thm}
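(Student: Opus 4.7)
My plan is to handle parts (i) and (ii) independently, since they invoke different techniques: an explicit construction for (i) and a semicontinuity/codimension count for (ii).

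\smallskip

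\emph{Part (i): Existence of prescribed splitting.} The approach is to realize each admissible $(b_i)$ by an explicit monomial (toric) curve. Given a strictly increasing exponent sequence $0 = c_0 < c_1 < \cdots < c_{n-1} < c_n = d$, consider
\[
f_c\colon \PP^1 \to \PP^n, \qquad f_c(t) = [t^{c_0}: t^{c_1}: \cdots: t^{c_n}],
\]
which is unramified at $0$ and $\infty$ (from $c_0 < c_1$ and $c_{n-1} < c_n$) and hence everywhere by $\Gm$-equivariance, linearly nondegenerate (the exponents are distinct), and of degree $d$. Pulling the Euler sequence back along $f_c$ and forming the normal exact sequence
\[
0 \to T_{\PP^1} \to f_c^{*}T_{\PP^n} \to N_{f_c} \to 0
\]
yields three $\Gm$-equivariant bundles, where $\Gm$ acts by $t\mapsto \lambda t$ on $\PP^1$ and diagonally with weights $c_i$ on $\PP^n$. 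Any $\Gm$-equivariant vector bundle on $\PP^1$ splits canonically into weight-line-bundle summands, so tracking the weights of the quotient map pins down the splitting of $N_{f_c}$. A direct character computation identifies the summands as $\cO_{\PP^1}(d+b_i)$ where the $b_i$ are explicit functions of the second differences of the sequence $(c_j)$; in particular $\sum b_i = 2d-2$ automatically. The condition $b_i\ge 2$ is precisely the feasibility condition for solving recursively for a strictly increasing $(c_j)$ given a prescribed $(b_i)$, which produces the required curve.

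\smallskip

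\emph{Part (ii): Generic splitting.} Let $M$ denote the smooth irreducible parameter space of unramified, linearly nondegenerate, degree-$d$ maps $\PP^1\to\PP^n$. Grothendieck's theorem stratifies $M$ according to the isomorphism class of $N_f$, and for a partition $\underline{a} = (a_1,\dots,a_{n-1})$ with $\sum a_i = (n+1)d-2$, the stratum
\[
M_{\underline{a}} = \{\, f\in M : N_f\cong\textstyle\bigoplus_i \cO_{\PP^1}(a_i)\,\}
\]
has codimension in $M$ equal to the jump number
\[
h^1(\mathrm{End}\, N_f) = \sum_{i<j}\max(a_j - a_i - 1,\; 0),
\]
by the standard deformation-theoretic description of splitting loci and upper semicontinuity of $h^1$. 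This codimension vanishes exactly when $|a_i-a_j|\le 1$ for all $i,j$, i.e., for the uniquely balanced partition of $(n+1)d-2$ into $n-1$ parts: writing $2d - n - 1 = q(n-1) + r$ with $0\le r<n-1$, the balanced splitting is $\cO(d+q+1)^{\oplus n-1-r}\oplus\cO(d+q+2)^{\oplus r}$. Part (i) produces an element in this stratum, and a codimension-zero nonempty locally closed subset of an irreducible variety is open and dense.

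\smallskip

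\emph{Main obstacle.} The core technical difficulty is the equivariant weight bookkeeping in part (i): one must identify the $\Gm$-weights at the two fixed points for the pull-back of the Euler sequence, take the quotient by $T_{\PP^1}\cong\cO(2)$ at the level of weight spaces, and verify that the resulting $n-1$ weight pairs organize themselves into the stated line bundles $\cO(d+b_i)$. One must then verify that the combinatorial map $(c_j)\mapsto(b_i)$ is a surjection onto the set $\{(b_i): b_i\ge 2,\ \sum b_i = 2d-2\}$; this is where the hypothesis $b_i\ge 2$ plays its essential role, ensuring the recursive construction of $(c_j)$ remains strictly monotone and so defines a bona fide unramified monomial curve.
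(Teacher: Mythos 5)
This theorem is quoted in the paper from \cite{Sac80} (with \cite[Theorem~2.7]{CR18} and \cite[Theorem~2]{AR17a}); the paper gives no proof of its own, so there is nothing to compare your argument with except the statement itself. Judged on its own merits, your proposal has a genuine gap in part (i), and it propagates into part (ii).

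The gap is that unramified monomial curves cannot realize all admissible sequences $(b_i)$. Your criterion for unramifiedness is wrong: $c_0<c_1$ does not make $f_c$ an immersion at $t=0$ (the cuspidal cubic $[1:t^2:t^3]$ already violates it); equivariance only handles the open orbit, and at the two fixed points one needs $c_1-c_0=1$ and $c_n-c_{n-1}=1$. Once these are imposed, the exponents $c_0=0,\ c_1=1,\ c_{n-1}=d-1,\ c_n=d$ are forced, leaving only $n-3$ free exponents $c_2,\dots,c_{n-2}$, while the admissible types $(b_i)$ form an $(n-2)$-parameter set — so the map $(c_j)\mapsto(b_i)$ cannot be surjective, contrary to what you assert in your closing paragraph. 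The failure is already total for $n=3$: there is exactly one unramified monomial curve of each degree, $[1:t:t^{d-1}:t^d]$, with $N_f=\cO_{\PP^1}(2d-1)^{\oplus 2}$, whereas (i) demands every type $(b_1,b_2)$ with $b_i\ge 2$, $b_1+b_2=2d-2$. Worse for part (ii): for $n=4$ the forced boundary exponents give $b_2=c_3-c_1=d-2$, which for large $d$ exceeds the balanced value $\approx(2d-2)/3$, so no monomial curve is balanced and the witness you need for density is not produced. (Compare Example~\xref{ex:monomial} in the paper, where the type $\cO(8)^{\oplus 3}\oplus\cO(10)$ is realized by projecting the rational normal curve from a point of a secant line — a deliberately non-monomial construction; Sacchiero's and Coskun--Riedl's existence proofs are likewise not monomial.) A secondary issue: in part (ii) you take for granted that the stratum $M_{\underline a}$ has codimension $h^1(\mathrm{End}\,N_f)$; that formula holds in a versal family of bundles on $\PP^1$, but the family of normal bundles of rational curves is not versal and these strata are known not to have the expected codimension in general. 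This particular weakness is repairable — by semicontinuity and irreducibility of the space of maps, a single curve with balanced normal bundle forces the generic one to be balanced — but that single curve is exactly what the broken part (i) fails to supply.
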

From (ii) of Theorem \ref{thm:Sacchiero} we deduce the following
\begin{cor}\label{cor:Sacchiero} The normal bundle of a generic rational sextic curve in $\PP^5$ is of splitting type (a) in \eqref{eq:split}. 
\end{cor}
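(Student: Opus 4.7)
The plan is to apply Theorem \ref{thm:Sacchiero}(ii) directly to the case $d = 6$, $n = 5$ and read off the splitting type. Since the statement of the corollary concerns a \emph{generic} rational sextic in $\PP^5$, no further geometric argument is required — this reduces to an arithmetic computation followed by a matching against the two a priori possibilities (a) and (b) listed in \eqref{eq:split}.

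First I would compute the Euclidean division $2d - n - 1 = q(n-1) + r$ appearing in Theorem \ref{thm:Sacchiero}(ii). With $d = 6$ and $n = 5$ one has $2d - n - 1 = 6$ and $n - 1 = 4$, so $q = 1$ and $r = 2$ (and indeed $r < n - 1 = 4$). Substituting into the formula of Theorem \ref{thm:Sacchiero}(ii) gives $d + q + 1 = 8$, $d + q + 2 = 9$, and the multiplicities $n - r - 1 = 2$ and $r = 2$, so that
\[
N_f \;=\; \cO_{\PP^1}(8)^{\oplus 2} \oplus \cO_{\PP^1}(9)^{\oplus 2}.
\]

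Finally I would observe that this coincides with possibility (a) in \eqref{eq:split}, which completes the proof. Since the argument is just a substitution into a cited formula, there is no real obstacle; the only thing to double-check is that the indexing and hypotheses of Theorem \ref{thm:Sacchiero}(ii) (namely $d > n \geq 3$, here $6 > 5 \geq 3$) are satisfied, which they evidently are.
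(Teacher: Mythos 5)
Your computation is correct and is exactly the argument the paper intends: the corollary is stated as an immediate deduction from Theorem \ref{thm:Sacchiero}(ii), and with $d=6$, $n=5$ one indeed gets $2d-n-1=6=1\cdot 4+2$, hence $q=1$, $r=2$ and $N_f=\cO_{\PP^1}(8)^{\oplus 2}\oplus\cO_{\PP^1}(9)^{\oplus 2}$, which is type (a). Nothing further is needed.
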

Let us give concrete examples of rational sextic curves with normal bundles of splitting types (a) and (b)  in \eqref{eq:split}. 
\begin{exa}\label{ex:monomial}
Consider the smooth monomial sextic curves $\cC$ and $\cC'$ in $\PP^5$ with parametrization
\[\cC=(u^6:u^5v:u^4v^2:u^2v^4:uv^5:v^6) \quad\text{resp.}\quad \cC'=(u^6:u^5v:u^3v^3:u^2v^4:uv^5:v^6). \]
Then 
\[N_{\cC/\PP^5}\cong N_{\cC'/\PP^5}\cong \cO_{\PP^1}(8)^{\oplus 2}\oplus\cO_{\PP^1}(9)^{\oplus 2},\] 
see \cite[Theorem 3.2]{CR18}. 

The splitting type (b) in \eqref{eq:split} can be realized for immersed rational sextic curves. 
For example, let $p_i=a_iu+b_iv$, $i=1,2$ be  two coprime linear forms with $a_i, b_i\neq 0$, 
and let $\cC\subset\PP^5$ be the rational sextic curve with parameterization
\[f\colon (u:v)\mapsto (u^5p_1:u^4vp_1:u^3v^2p_1:u^2v^3p_1:uv^4p_1:v^5p_2).\] 
Then $\cC$ is immersed with normal bundle
\[N_f=\cO_{\PP^1}(8)^{\oplus 3}\oplus\cO_{\PP^1}(10),\] 
see \cite{Sac80} and \cite[Lemma 2.4]{CR18}. Notice that $\cC$ is the image of the rational normal curve 
$C_6=h(\PP^1)\subset\PP^6$ parameterized via
\[(u:v)\mapsto (u^6:u^5v:\ldots:v^6)\] 
under the projection 
$\pi_P\colon\PP^6\dasharrow \PP^5$ with center 
\[P=(1:\lambda:\ldots:\lambda^5:\lambda^5\mu)\in \PP^6
\quad\text{where}\quad \lambda=-a_1/b_1\quad\text{and}\quad \mu=-a_2/b_2.\]
The center $P$ is situated on the secant line $(Q_1(\lambda)Q_2)$ of $C_6$ where 
\[Q_1(\lambda)=(1:\lambda:\ldots:\lambda^6)\in C_6\quad\text{and}\quad Q_2=(0:0:\ldots:0:1)\in C_6.\]
Hence $\pi(Q_2)=(0:\ldots:0:1)\in\PP^5$ is a point of self-intersection of $\cC=\pi_P(C_6)$. 
\end{exa}
The following theorem allows to replace ``generic''  by ``smooth'' in Theorem \ref{thm:Sacchiero}(ii) 
and Corollary \ref{cor:Sacchiero}. 

\begin{thm}[{\rm \cite[Theorem 3.3.13]{Ber11}, \cite[Theorem 2.12]{Ber12}\footnote{Cf.\ also 
\cite[Theorem 4.5]{Ber14}.}}]\label{thm:Bernardi}
Let $C_n \subset \PP^n$ be the rational normal curve of degree $n$. 
Then the image $\cC=\pi_P(C_n) \subset \PP^{n-1}$ has normal bundle 
\[N_{\cC/\PP^{n-1}} = \cO_{\PP^1}(n+2)^{\oplus\, n-4} \oplus \cO_{\PP^1}(n+3)^{\oplus 2}\] 
if and only if the center $P\in\PP^n$ of the projection $\pi_P\colon\PP^n\dasharrow\PP^{n-1}$ 
does not lie on a secant or tangent line to $C_n$, if and only if $\cC$ is smooth.
\end{thm}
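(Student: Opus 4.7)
The plan is to combine the projection geometry with a cohomological analysis of the normal bundle sequence produced by blowing up the center $P$.

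First I would verify the equivalence of smoothness of $\cC$ with the position of $P$ by the standard projection argument: the finite birational morphism $\pi_P|_{C_n}\colon C_n\to\cC$ is an embedding if and only if it is both injective on points (which fails, producing a node on $\cC$, precisely when $P$ lies on a secant of $C_n$) and immersive (which fails, producing a cusp, precisely when $P$ lies on a tangent line of $C_n$).

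Next I would derive the key short exact sequence
\[0\to \cO_{\PP^1}(n)\to N_{C_n/\PP^n}\to N_{\cC/\PP^{n-1}}\to 0.\]
On the blowup $\sigma\colon X=\mathrm{Bl}_P\,\PP^n\to\PP^n$ the projection $\pi_P$ becomes a $\PP^1$-bundle $\tilde\pi\colon X\to\PP^{n-1}$, realized as $\PP(\cO\oplus\cO_{\PP^{n-1}}(1))$, with exceptional divisor $E\cong\PP^{n-1}$ a section whose normal bundle is $\cO_{\PP^{n-1}}(-1)$. Over $\cC\cong\PP^1$, embedded by $\cO_\cC(1)=\cO_{\PP^1}(n)$, the preimage $\tilde\pi^{-1}(\cC)$ is the Hirzebruch surface $\FF_n$; the slice $E\cap\tilde\pi^{-1}(\cC)$ is its negative section $E_-$ of self-intersection $-n$. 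The proper transform $\tilde C_n\subset X$ is disjoint from $E$ and maps isomorphically onto $\cC$, hence is a section of $\tilde\pi^{-1}(\cC)\to\cC$ disjoint from $E_-$; this forces $\tilde C_n$ into the class of the positive section $E_+$, giving $N_{\tilde C_n/\tilde\pi^{-1}(\cC)}\cong\cO_{\PP^1}(n)$. The normal bundle sequence of $\tilde C_n\subset\tilde\pi^{-1}(\cC)\subset X$, combined with $N_{\tilde C_n/X}\cong N_{C_n/\PP^n}$ and $N_{\tilde\pi^{-1}(\cC)/X}|_{\tilde C_n}\cong N_{\cC/\PP^{n-1}}$, produces the displayed sequence.

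To extract the splitting type I would exploit a cohomological invariant. The inclusion $\cO(n)\hookrightarrow N_{C_n/\PP^n}\cong\cO_{\PP^1}(n+2)^{\oplus(n-1)}$ twists down to a section $(q_1,\ldots,q_{n-1})\in H^0(\cO_{\PP^1}(2))^{\oplus(n-1)}$. Twisting the sequence by $\cO_{\PP^1}(-n-4)$ and taking cohomology, together with Serre duality $H^1(\cO(-4))\cong H^0(\cO(2))^\vee$, gives
\[h^0\bigl(N_{\cC/\PP^{n-1}}(-n-4)\bigr)\;=\;3-\dim\langle q_1,\ldots,q_{n-1}\rangle.\]
The only splittings of a rank $n-2$ bundle on $\PP^1$ with total degree $n^2-2$ and every summand of degree $\ge n+2$ are the claimed type~(a), $\cO(n+2)^{\oplus(n-4)}\oplus\cO(n+3)^{\oplus 2}$, and a second type~(b), $\cO(n+2)^{\oplus(n-3)}\oplus\cO(n+4)$; direct check shows $h^0(N(-n-4))=0$ in case~(a) and $=1$ in case~(b). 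The theorem thus reduces to the geometric lemma that the sections $q_i$ span $H^0(\cO_{\PP^1}(2))\cong\CC^3$ if and only if $P$ avoids every secant and tangent line of $C_n$. Granted the lemma, a change of trivialization reduces the inclusion to $1\mapsto(u^2,uv,v^2,0,\ldots,0)$; the cokernel is $g^*T_{\PP^2}\oplus\cO(2)^{\oplus(n-4)}$ with $g\colon\PP^1\to\PP^2$ the Veronese, and the same cohomological criterion applied to $g$ identifies $g^*T_{\PP^2}\cong\cO(3)^{\oplus 2}$, so the cokernel is $\cO(3)^{\oplus 2}\oplus\cO(2)^{\oplus(n-4)}$; twisting back by $\cO(n)$ gives the claimed normal bundle.

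The main obstacle is the geometric lemma. A nonzero $\eta\in H^0(\cO_{\PP^1}(2))^\vee$ annihilating every $q_i$ corresponds to a length-two subscheme $\{t_0,t_1\}\subset\PP^1$ (possibly non-reduced) with weights $(\alpha\colon\beta)$; the identities $\alpha q_i(t_0)+\beta q_i(t_1)=0$ for all $i$ translate geometrically into the statement that the ``direction-toward-$P$'' classes in the normal spaces of $C_n$ at the two points parameterized by $t_0$ and $t_1$ are collinear along the chord joining them (or along the tangent line there when $t_0=t_1$), which is possible only if $P$ itself lies on the corresponding secant or tangent line. Conversely, every secant or tangent through $P$ supplies such an~$\eta$, closing the biconditional.
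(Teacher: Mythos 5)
First, a point of reference: the paper gives no proof of this statement — it is imported verbatim from Bernardi \cite[Theorem 3.3.13]{Ber11}, \cite[Theorem 2.12]{Ber12} — so your argument can only be measured against those sources. Their strategy, which you reproduce correctly in outline, is the projection exact sequence $0\to\cO_{\PP^1}(n)\to N_{C_n/\PP^n}\to N_{\cC/\PP^{n-1}}\to 0$ together with an analysis of how the resulting embedding $\cO(n)\hookrightarrow\cO(n+2)^{\oplus(n-1)}$ depends on $P$. Your bookkeeping is also correct: the smoothness criterion, the dichotomy of splitting types (a) and (b), the identity $h^0\bigl(N_{\cC/\PP^{n-1}}(-n-4)\bigr)=3-\dim\langle q_1,\dots,q_{n-1}\rangle$, the Euler-sequence computation of the cokernel of $1\mapsto(u^2,uv,v^2,0,\dots,0)$, and the reduction to the lemma that the $q_i$ span $H^0(\cO_{\PP^1}(2))$ exactly when $P$ avoids all secant and tangent lines.

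The gap is in your proof of that lemma, which is the actual content of the theorem. The translation ``$\alpha q_i(t_0)+\beta q_i(t_1)=0$ for all $i$ if and only if $P$ lies on the secant line $\langle\nu(t_0),\nu(t_1)\rangle$'' is false, and cannot be repaired as stated: a fixed nonzero $\eta\in H^0(\cO(2))^\vee$ admits a one-parameter family of representations $\alpha\,\mathrm{ev}_{t_0}+\beta\,\mathrm{ev}_{t_1}$ (every line through the point $[\eta]$ of $\PP(H^0(\cO(2))^\vee)\cong\PP^2$ meets the conic of evaluation functionals in two points), so your dictionary would force $P$ to lie on infinitely many secants, whereas a general point of ${\rm Sec}(C_n)$ lies on exactly one. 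Concretely, the $3\times(n-1)$ matrix $(q_i)$ is, by $\SL_2$-equivariance, the catalecticant of the binary $n$-form $p$ defining $P$; for $P=[u^n+v^n]$, which lies on the secant through $\nu(1{:}0)$ and $\nu(0{:}1)$, the unique functional annihilating all the $q_i$ is $q\mapsto(\text{coefficient of }uv)=\tfrac12\bigl(q(1,1)-q(1,-1)\bigr)$: it is a weighted sum of evaluations at the harmonic conjugate pair $(1{:}\pm 1)$, not at the points of secancy, and it is not expressible through $\mathrm{ev}_{(1:0)}$ and $\mathrm{ev}_{(0:1)}$ at all. The correct statement is that a nonzero $\eta$ kills all the $q_i$ if and only if $P$ lies on the secant (or tangent) line spanned by the points of $C_n$ corresponding to the \emph{roots} of $\eta$ viewed as a binary quadric under the symplectic self-duality of $H^0(\cO(2))$; the resulting equivalence ``$\dim\langle q_i\rangle\le 2\Leftrightarrow P\in{\rm Sec}(C_n)$'' is the apolarity lemma combined with Gundelfinger's classical theorem that ${\rm Sec}(C_n)$ is cut out by the $3\times 3$ minors of the catalecticant. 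That is precisely the route of the cited proofs — Bernardi's papers relate the normal bundle to the apolar ideal for exactly this reason — and it is what your argument needs in place of the ``collinear along the chord'' heuristic; the rest of your outline stands.
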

Since $E^+\subset\PP^5=\langle E^+\rangle$ is a smooth sextic curve, the following corollary is immediate.
\begin{cor}\label{cor:second-case} One has
\[N_{E^+/\PP^{5}} = \cO_{\PP^1}(8)^{\oplus 2} \oplus \cO_{\PP^1}(9)^{\oplus 2}.\] 
\end{cor}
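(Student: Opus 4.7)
The plan is to identify $E^+$ as a linear projection of the rational normal sextic $C_6\subset\PP^6$ from a single point, and then read off the splitting type from Theorem~\ref{thm:Bernardi}. Essentially all of the work has been done in the preceding sections, so this will be a short two-step reduction.

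First I would assemble the geometric setup already in place. By Theorem~\ref{thm:towards-mthm}, the normalization $\tilde\Pi=\Pi_{\mathrm{norm}}$ is embedded in $\PP^{13}$ by the complete linear system of bidegree $(1,6)$ on $\PP^1\times\PP^1$, and the preimage $\bar E^+$ of $E^+$ is a constant section of the projection $\mathrm{pr}_1$. The restriction of a type-$(1,6)$ divisor to a fiber $\{t\}\times\PP^1$ of $\mathrm{pr}_1$ is the complete linear system $|\cO_{\PP^1}(6)|$. Consequently the image of $\bar E^+$ in its $6$-dimensional linear span $H^+=\langle\bar E^+\rangle\subset\PP^{13}$ is a rational normal sextic curve $C_6$.

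Next I would locate the center $Z$ of the linear projection $\pi\colon\PP^{13}\dashrightarrow\PP^{12}$ that sends $\Pi_{\mathrm{norm}}$ bijectively onto $\Pi$. By Lemma~\ref{lem:11} and Proposition~\ref{prop:lin-nondegen}, $Z$ is a single point, and by Corollary~\ref{cor:lin-nondegen} it lies in one of the two linear spans $H^\pm=\langle\bar E^\pm\rangle$. Since $\dim\langle E^+\rangle=5$ while $\dim H^+=6$, the center must lie in $H^+$: otherwise $\pi$ would restrict to an embedding $H^+\hookrightarrow\langle E^+\rangle$, forcing $\dim\langle E^+\rangle=6$. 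Thus $E^+$ is the image of $C_6$ under linear projection from a single point $Z\in H^+\setminus C_6$.

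Finally, because $E^+$ is smooth, the ``if and only if'' clause of Theorem~\ref{thm:Bernardi} (with $n=6$) guarantees that $Z$ lies on no secant or tangent line to $C_6$, and gives at once
\[ N_{E^+/\PP^5}\;=\;\cO_{\PP^1}(6+2)^{\oplus 6-4}\oplus\cO_{\PP^1}(6+3)^{\oplus 2}\;=\;\cO_{\PP^1}(8)^{\oplus 2}\oplus\cO_{\PP^1}(9)^{\oplus 2}. \]
The only point requiring a moment of care is the identification of $E^+$ as a single-point projection of a rational normal sextic, for which the argument above relies on the linear nondegeneracy statement and the analysis of the center $Z$ in Section~6; once that is in place, there is no computation left — Bernardi's characterization is exactly what is needed.
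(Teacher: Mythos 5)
Your proof is correct and follows essentially the same route as the paper: the paper likewise deduces the corollary immediately from Theorem~\ref{thm:Bernardi}, using that $E^+$ is a smooth sextic spanning only a $\PP^5$ and hence the projection of the rational normal sextic $\bar E^+\subset\langle\bar E^+\rangle=\PP^6$ from a single point, your explicit location of the center $Z\in H^+$ via Corollary~\ref{cor:lin-nondegen} being exactly the step the paper leaves implicit. One terminological slip: $\bar E^+$ is a constant section of $\mathrm{pr}_1$ (a fiber of $\mathrm{pr}_2$), so the degree-$6$ restriction of the $(1,6)$ system should be taken on that curve, not on a fiber $\{t\}\times\PP^1$ of $\mathrm{pr}_1$, whose image is a ruling (a line).
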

This ends the proofs of Proposition \ref{prop:norm-bdl} and Theorem \ref{thm:mthm}.
%
%
\newcommand{\etalchar}[1]{$^{#1}$}
\def\cprime{$'$}


\begin{thebibliography}{AFK{\etalchar{+}}13}
%
\bibitem[AH17]{AH17}
J.~Adams and X.~He.
\newblock Lifting of elements of Weyl groups.
\newblock {\em J. Algebra}, 485:142--165, 2017.
%
\bibitem[AR17]{AR17a}
A.~Alzati and R.~Re.
\newblock Remarks on the normal bundles of generic rational curves.
\newblock \emph{Ann. Univ. Ferrara Sez. VII Sci. Mat.},
 63-2:211--220, 2017.
%
\bibitem[BM22]{BM22}
V.~Benedetti and L.~Manivel.
\newblock On the automorphisms of hyperplane sections of generalized Grassmannians.
\newblock {\em Transform. Groups}, 2022.
https://doi.org/10.1007/s00031-022-09757-1
%
\bibitem[Ber11]{Ber11}
A.~Bernardi.
\newblock Normal Bundle of rational Curves and the Waring's Problem.
\newblock \emph{Tesi di Dottorato}. Universit\`a degli studi di Firenze, 
Facolt\`a di Matematica ``Ulisse Dini'', 2011.
https://web.math.unifi.it/users/ottavian/tesi/2011PHDTHESISBernardi.pdf
%
\bibitem[Ber12]{Ber12}
A.~Bernardi.
\newblock Apolar ideal and normal bundle of rational curves.
\newblock 	arXiv:1203.4972, 2012. 
%
\bibitem[Ber14]{Ber14}
A.~Bernardi.
\newblock Normal bundle of rational curves and Waring decomposition.
\newblock {\em J. Algebra}, 400:123--141, 2014.
%
\bibitem[CR18]{CR18}
I.~Coskun and E.~Riedl.
\newblock Normal bundles of rational curves in projective space.
\newblock {\em Math. Z.}, 288:803--827, 2018.
%
\bibitem[DM22]{DM22}
T.~Dedieu and L.~Manivel, with an appendix by Yu.~Prokhorov.
\newblock On the automorphisms of Mukai varieties.
\newblock {\em Math. Z.}, 300:3577--3621, 2022. 
https://doi.org/10.1007/s00209-021-02965-7
%
\bibitem[Dem96]{Dem96}
J.-P.~Demailly.
\newblock \emph{Th\'eorie de Hodge $L_2 $ et th\'eor\`emes d'annulation}. 
\newblock In:
J.~Bertin, J.-P.~Demailly, L.~Illusie, and Ch.~Peters.
\newblock \emph{Introduction \`a la th\'eorie de Hodge}, 3--112. 
\newblock \emph{Panoramas et synth\`eses} 3. Soci\'et\'e Math\'ematique de France, 1996.
%
\bibitem[Die55]{Die55}
J.~Dieudonn\'e.
\newblock \emph{La g\'eom\'etrie des groupes classiques}. 
\newblock Springer-Verlag, Berlin-G\"{o}ttingen-Heidelberg, 1955.
%
\bibitem[Edge31]{Edge31}
W.~L.~Edge. 
\newblock \emph{The theory of ruled surfaces}. 
\newblock Cambridge University Press, Cambridge, 1931.
%
\bibitem[Fuj80]{Fuj80}
T.~Fujita.
\newblock On the hyperplane section principle of Lefschetz. 
\newblock \emph{J. Math. Soc. Japan}, 32:153--159, 1980.  
%
\bibitem[Har92]{Har92}
J.~Harris.
\newblock {\em Algebraic geometry. {A} first course}, 
\newblock {\em Graduate Texts in Mathematics}  133.
Springer-Verlag, New York, 1992. 
%
\bibitem[Hir57]{Hir57}
H.~Hironaka.
\newblock On the arithmetic genera and the effective genera of algebraic curves.
\newblock \emph{Memoirs of the College of Science, University of Kyoto, series A}, 
Vol. XXX, Mathematics, No. 2, 1957.
%
\bibitem[Isk77]{Isk77}
V.~A.~Iskovskikh.
\newblock Fano threefolds. I  (in Russian).
\newblock \emph{ Izv. Akad. Nauk SSSR, Ser. Mat.}, 41(3):516--562, 1977.
%
\bibitem[KR13]{KR13}
M.~Kapustka and K.~Ranestad.
\newblock Vector bundles on {F}ano varieties of genus ten.
\newblock {\em Math. Ann.}, 356(2):439--467, 2013.
%
\bibitem[KPS18]{KPS18}
A.~Kuznetsov, Y.~Prokhorov, and C.~Shramov.
\newblock Hilbert schemes of lines and conics and automorphism groups of {F}ano
threefolds.
\newblock {\em Japanese J. Math.}, 13(1):109--185, 2018.
%
\bibitem[Muk89]{Muk89}
Sh.~Mukai.
\newblock Biregular classification of {F}ano {$3$}-folds and {F}ano manifolds
of coindex {$3$}.
\newblock {\em Proc. Nat. Acad. Sci. U.S.A.}, 86(9):3000--3002, 1989.
%
\bibitem[Pro06]{Pro06}
C.~Procesi.
\newblock {\em Lie Groups: An approach through invariants and representations}.  
\newblock Universitext, Springer, 2006.
%
\bibitem[PZ18]{PZ18}
Yu.~Prokhorov and M.~Zaidenberg.
\newblock Fano-{M}ukai fourfolds of genus $10$ as compactifications of
{$\mathbf{C}^4$}.
\newblock {\em European J. Math.}, 4(3):1197--1263, 2018.
%
\bibitem[PZ22]{PZ22}
Yu.~Prokhorov and M.~Zaidenberg.
\newblock Fano--Mukai fourfolds of genus $10$ and their automorphism groups.
\newblock {\em European J. Math.}, 8:561--572, 2022. 
%
\bibitem[PZ23]{PZ23}
Yu.~Prokhorov and M.~Zaidenberg.
\newblock Affine cones over Fano--Mukai fourfolds of genus 10 are flexible.
\newblock 
\emph{The Art of Doing Algebraic Geometry}, 363--383. 
\newblock Th.~Dedieu,
F.~Flamini, C.~Fontanari, C.~Galati and R.~Pardini (eds.). \emph{Trends in Mathematics}. 
A special volume dedicated to Ciro Ciliberto's 70th birthday.
Birkh\"{a}user, Cham, 2023. https://doi.org/10.1007/978-3-031-11938-5, SBN
978-3-031-11937-8
%
\bibitem[Sac80]{Sac80}
G.~Sacchiero.  
\newblock Fibrati normali di curvi razionali dello spazio proiettivo.
\newblock \emph{Ann. Univ. Ferrara Sez. VII Sci. Mat.}, 26:33--40, 1980.
%
\bibitem[tD79]{tD79}
T.~tom Dieck.
\newblock {\em  Transformation groups and representation theory.}
\newblock {\em  Lecture Notes in Math.} 766. Springer, Berlin, Heidelberg, New York, 1979.
%
\end{thebibliography}
\end{document}